\documentclass[11pt, reqno]{amsart}
\usepackage{amsmath, amssymb}
\usepackage{color}
\usepackage{bm}
\usepackage[normalem]{ulem}
\usepackage{graphicx}
\usepackage{tikz}
\usepackage[all]{xy}
\usepackage{mathtools}
\usepackage{enumitem}
\allowdisplaybreaks 
\usepackage{hyperref}


\newtheorem{theorem}{Theorem}[section]
\newtheorem{lemma}[theorem]{Lemma}
\newtheorem{proposition}[theorem]{Proposition}
\newtheorem{corollary}[theorem]{Corollary}

\theoremstyle{definition}

\newtheorem{example}[theorem]{Example}

\theoremstyle{remark}
\newtheorem{remark}[theorem]{Remark}
\numberwithin{equation}{section}

\newcommand{\Z}{\mathbb{Z}}
\newcommand{\Q}{\mathbb{Q}}
\newcommand{\C}{\mathbb{C}}
\newcommand{\K}{\mathbb{K}}
\newcommand{\g}{\mathfrak{g}}
\newcommand{\A}{\mathcal{A}}
\newcommand{\I}{\mathcal{I}}
\newcommand{\T}{\mathcal{T}}

\newcommand{\Ker}{\operatorname{Ker}}
\renewcommand{\Im}{\operatorname{Im}}
\newcommand{\Isom}{\operatorname{Isom}}
\newcommand{\gl}{\mathfrak{gl}}
\renewcommand{\sl}{\mathfrak{sl}}
\newcommand{\so}{\mathfrak{so}}
\renewcommand{\sp}{\mathfrak{sp}}
\newcommand{\id}{\mathrm{id}}
\newcommand{\pr}{\mathrm{pr}}

\newcommand{\tr}{\operatorname{tr}}
\newcommand{\KZ}{\mathrm{KZ}}

\newcommand{\ad}{\operatorname{ad}}

\newcommand{\GT}{\mathit{GT}}
\newcommand{\GRT}{\mathit{GRT}}
\newcommand{\grt}{\mathfrak{grt}}
\newcommand{\SL}{\mathcal{SL}}
\newcommand{\St}{\mathrm{St}}
\newcommand{\End}{\mathrm{End}}
\newcommand{\even}{\mathrm{even}}
\newcommand{\uni}{\mathrm{uni}}

\newcommand{\pre}{\mathrm{pre}}
\newcommand{\hor}{\mathrm{hor}}
\newcommand{\Spec}{\mathrm{Spec}}
\newcommand{\PB}{\mathit{PB}}

\newcommand{\chordA}{
\begin{tikzpicture}[scale=0.4, baseline={(0,0.3)}, densely dashed]
    \draw[solid,->] (0,2) -- (0,0);
    \draw[solid,->] (2,2) -- (2,0);
    \draw[solid,->] (4,2) -- (4,0);
    \draw (0,1) -- (2,1);
\end{tikzpicture}
}

\newcommand{\chordB}{
\begin{tikzpicture}[scale=0.4, baseline={(0,0.3)}, densely dashed]
    \draw[solid,->] (0,2) -- (0,0);
    \draw[solid,->] (2,2) -- (2,0);
    \draw[solid,->] (4,2) -- (4,0);
    \draw (2,1) -- (4,1);
\end{tikzpicture}
}

\newcommand{\onechord}{
\begin{tikzpicture}[scale=0.4, baseline={(0,0.3)}, densely dashed]
    \draw[solid,->] (0,2) -- (0,0);
    \draw[solid,->] (2,2) -- (2,0);
    \draw (0,1) -- (2,1);
\end{tikzpicture}
}

\newcommand{\onechordcross}{
\begin{tikzpicture}[scale=0.4, baseline={(0,0.3)}]
    \draw (0,3) -- (2,2);
    \draw (2,3) -- (0,2);
    \draw[->] (0,2) -- (0,0);
    \draw[->] (2,2) -- (2,0);
    \draw[densely dashed] (0,1) -- (2,1);
\end{tikzpicture}
}

\newcommand{\phichord}{
\begin{tikzpicture}[scale=0.4, baseline={(0,0.3)}, densely dashed]
    \draw[solid,->] (0,2) -- (0,0);
    \draw[solid,->] (3,2) -- (3,0);
    \draw (0,1) -- (1,1);
    \draw (3,1) -- (2,1);
    \draw (2,1) arc[start angle=0, end angle=360, radius=0.5];
\end{tikzpicture}
}

\newcommand{\Ugraph}{
\begin{tikzpicture}[scale=0.6, baseline={(0,0.05)}, densely dashed]
\draw[solid, rounded corners=2pt] (0,0.5) -- (0,0.3) -- (0.5,0.3) -- (0.5,0.5);
    \draw[solid, rounded corners=2pt] (0.5,0) -- (0.5,0.2) -- (0, 0.2) -- (0,0);
\end{tikzpicture}
}

\title[]{The Kontsevich invariant and the action of the Grothendieck--Teichm\"{u}ller group on $2$-component string links}

\author{Hisatoshi Kodani}
\address{
Institute of Mathematics for Industry,
Kyushu University \\
744, Motooka, Nishi-ku, Fukuoka, 819-0395\\
Japan}
\email{kodani@imi.kyushu-u.ac.jp}

\author{Yuta Nozaki}
\address{
Faculty of Environment and Information Sciences, Yokohama National University \\
79-7 Tokiwadai, Hodogaya-ku, Yokohama, 240-8501 \\
Japan\vspace{-0.6em}}
\address{
WPI-SKCM$^2$, Hiroshima University \\
1-3-1 Kagamiyama, Higashi-Hiroshima, Hiroshima, 739-8526 \\
Japan}
\email{nozakiy@hiroshima-u.ac.jp}

\subjclass[2020]{Primary 57K16, 14G32, Secondary 57K10, 17B01}

\keywords{Kontsevich invariant, associator, Grothendieck--Teichm\"{u}ller group, proalgebraic string links}

\begin{document}
\begin{abstract}
The Kontsevich invariant of links is independent of the choice of associator, whereas for tangles this is not the case in general.
In this paper, we focus on $2$-component string links and investigate to what extent the Kontsevich invariant depends on the choice of associator.
As an application, we show that the action of the unipotent part of the Grothendieck--Teichm\"{u}ller group on the algebra of proalgebraic $2$-component string links is non-trivial, which provides a partial answer to a problem posed by Furusho.
\end{abstract}
\maketitle

\setcounter{tocdepth}{1}
\tableofcontents

\section{Introduction}
\label{sec:intro}

Kontsevich invariants are topological invariants for $q$-tangles, which take values in the space of Jacobi diagrams, where a $q$-tangle is a tangle whose endpoints are equipped with non-associative words. 
The invariants were originally introduced by Kontsevich~\cite{Kon93} using Chen's iterated integrals, and then Le and Murakami~\cite{LeMu96CM} gave an alternative combinatorial construction of the invariants via associators. 
See also \cite{Bar97}, \cite{LeMu95TA}, \cite{Piu95}, and \cite{Car93}. 
In general, the Kontsevich invariants depend on the choice of associator, and we write $Z_{\Phi}$ for the Kontsevich invariant defined by an associator $\Phi$.

We now consider the case of $n$-component string links \cite{HL90} as illustrated in Figure~\ref{fig:string_link}, which are also known as homology cylinders over $\Sigma_{0,n+1}$, a surface of genus $0$ with $n+1$ boundary components.
In this case, it is known that the Kontsevich invariants are independent of the choice of associator when $n=1$, but they do depend on it when $n \geq 3$. 
The former is essentially the same as the case for knots. For the latter, as seen in Example~\ref{ex:3-pb_konts_inv}, we can easily observe that there exists a $q$-braid $T$ of three strands such that $Z_{\Phi_\KZ}(T)_{4}\neq Z_{\Phi_\Q^\even}(T)_{4}$.
Here, $Z_{\Phi_\KZ}$ and $Z_{\Phi_\Q^\even}$ denote the Kontsevich invariants associated with the Knizhnik--Zamolodchikov associator $\Phi_\KZ$ and with an even rational horizontal associator $\Phi_\Q^\even$ (see Section~\ref{subsec:associator}), respectively.
Further, the subscript $4$ means their degree $4$ parts.

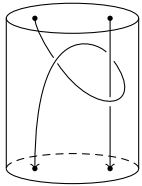
\begin{figure}[h]
\begin{tikzpicture}
\coordinate (a) at (0,0);
\coordinate (b) at (0,-2);
\coordinate (c) at (1,0);
\coordinate (d) at (1,-2);

 \foreach \x in {(a), (b), (c), (d)}{
        \fill \x circle[radius=1pt];
  }
  \begin{scope}[yscale=1.5]{
  \draw(c) -- (1, -0.7);
  \draw[->] (1, -0.78) -- (d);
  \draw (a) .. controls (0, -0.08) and  (0.15, -0.26) .. (0.25,-0.35);
  \draw[->] (0.95, -0.3) .. controls (0.8, -0.2) and (0, 0) .. (b);
  \draw (0.3,-0.4) .. controls (0.8, -0.9) and (1.5, -0.8) .. (1.05, -0.38);
  }
  \end{scope}
 \draw (1.38,0) arc (0:360:0.88cm and 0.2cm);
  \draw[densely dashed] (1.38,-2) arc (0:180:0.88cm and 0.2cm);
   \draw (1.38,-2) arc (0:-180:0.88cm and 0.2cm);
   \draw (-0.38,0) -- (-0.38,-2);
    \draw (1.38,0) -- (1.38,-2);
    \end{tikzpicture}
    \caption{An example of a $2$-component string link.}
    \label{fig:string_link}
\end{figure}

However, when $n=2$, the dependence is somewhat subtle and does not seem to be well understood (cf.~\cite[Problem~3.17]{Fur20}).
As mentioned in the latter part of the introduction, this subtlety is closely connected with a difference between the Grothendieck--Teichm\"uller group action on the proalgebraic pure braids on two strands and that on proalgebraic $2$-component string links (see Section~\ref{subsec:string_link}).

The following proposition states that the Kontsevich invariants for $2$-component string links are independent of the choice of associator up to degree $6$.
\begin{proposition}
\label{prop:deg_6}
Let $T$ be a $2$-component string link and let $\Phi$, $\Phi'$ be associators.
Then, $Z_{\Phi}(T)_{\leq 6} = Z_{\Phi'}(T)_{\leq 6}$.
\end{proposition}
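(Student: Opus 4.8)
The plan is to reduce the assertion to a low-degree computation in the Grothendieck--Teichm\"uller Lie algebra. By Drinfel'd's theorem the set of associators is a torsor under $\GRT_1$, so one may write $\Phi'=g\cdot\Phi$ for a unique $g\in\GRT_1$; moreover, in the Le--Murakami construction the passage from $\Phi$ to $\Phi'$ changes $Z_\Phi(T)$ only through the associator factors $\Phi^{\pm1}$ attached to the rebracketing moves in a fixed $q$-tangle presentation of $T$. (The rescaling of the framing parameter $\mu$ and the choice between $\Phi_\KZ$ and an even rational horizontal associator only modify the self-chord part, which is the same for the two invariants, so they play no role here.) Hence it suffices to show that for every $g\in\GRT_1$ and every $2$-component string link $T$, the induced operator on $Z_\Phi(T)$ is the identity in degrees $\le 6$.

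Next I would exploit the structure of $\grt_1$ in low degrees: one has $(\grt_1)_d=0$ for $d\le 2$ and for $d\in\{4,6\}$, while $\dim(\grt_1)_d=1$ for $d\in\{3,5\}$, spanned by Drinfel'd's elements $\sigma_3$ and $\sigma_5$. Writing $g=\exp(\gamma)$ with $\gamma\in\widehat{\grt}_1$ we get $\gamma\equiv c_3\sigma_3+c_5\sigma_5$ modulo degree $\ge 7$. The operator attached to $\sigma_d$ raises degree by $d$ and kills the degree-$0$ part, so the change of $Z_\Phi(T)$ in degrees $\le 6$ equals $c_3\bigl(\sigma_3\!\cdot\!Z_\Phi(T)_1+\sigma_3\!\cdot\!Z_\Phi(T)_2+\sigma_3\!\cdot\!Z_\Phi(T)_3\bigr)+c_5\,\sigma_5\!\cdot\!Z_\Phi(T)_1$; all contributions of $\sigma_7,\sigma_8,\dots$ and all quadratic terms $\tfrac12\gamma^2$ land in degree $\ge 7$. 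Thus the proposition reduces to the two vanishing statements: $\sigma_3$ annihilates $Z_\Phi(T)_{\le 3}$ and $\sigma_5$ annihilates $Z_\Phi(T)_1$, for every $2$-component string link $T$.

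To establish these I would trace the degree-$3$ (resp.\ degree-$5$) part of $\Phi'\Phi^{-1}$ through the construction. For a string link the associator is inserted only where at least three strand segments meet, which is forced only by local extrema of its (possibly knotted) components; in any such configuration at least two of the relevant strand indices belong to one component of $T$, and closing up along that component is exactly the operation that makes the associator-dependence of a long knot cancel. The remaining component couples only through the single chord $t_{12}$, and since $\widehat{\mathfrak t}_2=\Q\,t_{12}$ is abelian, its coupling contributes only terms of the form $[t_{12},-]$ whose behaviour in degrees $\le 6$ is controlled. Concretely: $\sigma_5$ kills $t_{12}$ and the self-chords $a_1,a_2$ (the latter by the $n=1$ case), giving $\sigma_5\!\cdot\!Z_\Phi(T)_1=0$; and $\sigma_3$ kills $Z_\Phi(T)_{\le 3}$ because the part of $Z_\Phi(T)_{\le 3}$ supported on a single component is handled by the $n=1$ case, the part in $\widehat{\mathfrak t}_2$ by abelianness, and the remaining finitely many interaction diagrams of degree $\le 3$ on two intervals (after STU-reduction) by direct inspection.

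The step I expect to be the main obstacle is this last one: pushing the degree-$3$ part of $\Phi'\Phi^{-1}$ through the closing-up along a knotted component while a spectator strand is present, and verifying the finite list of cancellations in degrees $\le 6$. This is precisely where the bound $6$ is decided --- the cancellation inherited from the $n=1$ case, together with the abelianness of $\widehat{\mathfrak t}_2$, survives only through degree $6$ and genuinely fails at degree $7$ (which is what the subsequent application to the non-triviality of the $\GT$-action exploits). In particular no soft, degree-independent argument is available, and the low-degree analysis of $\A(\uparrow\uparrow)$ and of the associators cannot be avoided.
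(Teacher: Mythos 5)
Your reduction has two genuine gaps. First, the proposition is stated for \emph{arbitrary} associators $\Phi,\Phi'\in\widehat{\A}(\downarrow\downarrow\downarrow)_\K$, and the set of such associators is \emph{not} a torsor under $\GRT_1(\K)$: the torsor statement holds for Drinfeld (horizontal) associators, whose degree~$2$ term is pinned to $\frac{1}{24}[t_{12},t_{23}]$, whereas a general associator need not have this form (indeed Proposition~\ref{prop:general_replacement} of the paper produces non-horizontal ones by a degree~$2$ twist). What is true in general is only that $\Phi'$ is obtained from $\Phi$ by twisting via some symmetric $F\in\widehat{\A}(\downarrow\downarrow)_\K$ with $\varepsilon_i(F)=1$, and such an $F$ may have nonzero parts in degrees $1$, $2$, $3$; the vanishing of $\grt_1$ in degrees $\leq 2$ and $4,6$ therefore does not control the possible changes. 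The paper instead invokes the Le--Murakami formula $Z_{\Phi'}(T)=F\cdot Z_{\Phi}(T)\cdot F^{-1}$ and kills the low-degree parts of $F$ by showing that $\A_{\leq 2}(\downarrow\downarrow)$ is \emph{central} (Corollary~\ref{cor:A_leq2_is_in_center}); your "closing up along a knotted component / abelianness of $\widehat{\mathfrak t}_2$" heuristic is not a substitute for this conjugation formula and does not address non-horizontal twists at all.

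Second, even after the correct reduction, the decisive step is exactly the one you defer as "direct inspection": modulo centrality of degree $\leq 2$, the only surviving term in degree $\leq 6$ is the commutator $[F_3,z_3]$, and its vanishing is the statement that all elements of $\A_{\leq 3}(\downarrow\downarrow)$ commute with one another (Proposition~\ref{prop:J3} in the paper, proved via the box/IHX lemma together with the bubble reductions). This is the actual content of the proposition, and your proposal contains no argument for it — you yourself flag it as the main obstacle. As a minor point, your closing claim that independence "genuinely fails at degree $7$" is unsupported: the paper's obstruction is $[J_3,J_{2n+1}]\neq 0$ for $n\geq 2$, which first produces a discrepancy in degree $2n+4\geq 8$, and no failure at degree $7$ is established.
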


In particular, $Z_{\Phi_\KZ}(T)_{\leq 6} = Z_{\Phi_\Q^\even}(T)_{\leq 6}$.
Regarding higher degree terms, we obtain the following main result.
Let $\K$ be a field of characteristic $0$. 

\begin{theorem}
\label{thm:deg_8}
There exist $2$-component string links $T_n$ $(n\geq 2)$ satisfying the following:
for any associator $\Phi$ with coefficients in $\K$, there exists an associator $\Phi'$ with coefficients in $\K$ such that
\[
Z_{\Phi}(T_n)_{\leq 2n+3}= Z_{\Phi'}(T_n)_{\leq 2n+3}
\text{ and }
Z_{\Phi}(T_n)_{2n+4}\neq Z_{\Phi'}(T_n)_{2n+4}.
\]
Furthermore, if $\Phi$ is horizontal, then $\Phi'$ can also be chosen to be horizontal.
\end{theorem}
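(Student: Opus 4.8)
The plan is to exploit the torsor structure on the set of associators. Any two associators $\Phi,\Phi'$ with coefficients in $\K$ satisfy $\Phi'=g\cdot\Phi$ for a unique $g$ in the prounipotent $\K$-group $\GRT_1$, whose Lie algebra is positively graded with $\grt_1^{(1)}=\grt_1^{(2)}=0$ and $\grt_1^{(3)}=\K\sigma_3$; thus $\sigma_3$ is the unique lowest-degree direction. Using the relation between $Z_\Phi$ and $Z_{\Phi'}$ recalled in the preceding sections, for a $2$-component string link $T$ (which carries an essentially unique parenthesization, unlike the $n\ge 3$ case) one has $Z_{\Phi'}(T)=g\cdot Z_\Phi(T)$, where $\GRT_1$ acts on the algebra $\A(\uparrow\uparrow)$ of Jacobi diagrams on two strands by graded algebra automorphisms. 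Taking $g=\exp(\sigma_3)$, so that $g$ acts by $\id$ plus operations raising degree by positive multiples of $3$, one sees that if $Z_\Phi(T)$ is supported in degrees $0$ and $\ge d$ with $Z_\Phi(T)_d=D$, then $Z_{\Phi'}(T)_{\le d+2}=Z_\Phi(T)_{\le d+2}$ while $Z_{\Phi'}(T)_{d+3}=Z_\Phi(T)_{d+3}+\sigma_3\cdot D$; moreover $D$ itself is independent of $\Phi$, since any change of associator affects only degrees $>d$ (which refines Proposition~\ref{prop:deg_6}). Hence, with $d=2n+1$, the theorem reduces to producing $2$-component string links $T_n$ $(n\ge 2)$ with $Z(T_n)$ supported in degrees $0$ and $\ge 2n+1$ and with degree $(2n+1)$ part $D_n$ satisfying $\sigma_3\cdot D_n\neq 0$ in $\A(\uparrow\uparrow)$; then $\Phi':=\exp(\sigma_3)\cdot\Phi$ does the job.

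For the construction of $T_n$ I would use clasper calculus: let $T_n$ be obtained from the trivial $2$-component string link by surgery along a tree clasper of degree $2n+1$ (hence with $2n+2$ leaves), the shape of the clasper and the placement of its leaves on the two components to be fixed below. By the clasper surgery formula for the Kontsevich invariant (Habiro), $Z(T_n)=1+D_n+(\text{terms of degree}\ge 2n+2)$, where $D_n$ is the tree Jacobi diagram determined by the clasper; in particular $Z(T_n)$ is supported in the required degrees and $D_n$ is explicit. For $n=2$ this produces a degree $5$ tree on two strands. Note, as a sanity check, that the same recipe cannot work for $n=1$: Proposition~\ref{prop:deg_6} forces $\sigma_3$ to annihilate every degree $3$ tree Jacobi diagram on two strands, so the first place where the construction can possibly succeed is $2n+4=8$.

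The crux — and what I expect to be the main obstacle — is to choose the shape of $T_n$ so that $\sigma_3\cdot D_n\neq 0$. Here one uses the explicit combinatorial description of the $\grt_1$-action on $\A(\uparrow\uparrow)$ from the earlier sections to write $\sigma_3\cdot D_n$ as an explicit linear combination of degree $(2n+4)$ Jacobi diagrams, and shows it is nonzero. Two points make this delicate. First, the tree-level (loop degree $0$) reduction of $\sigma_3\cdot D_n$ necessarily vanishes, since the tree part of the Kontsevich invariant of a string link — equivalently, its Milnor invariants — is associator-independent; so the non-vanishing must be detected in the part of $\A(\uparrow\uparrow)$ of loop degree $\ge 1$, and one has to argue there. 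Second, one must rule out cancellations among the insertions defining the action. I expect both to be handled by taking $D_n$ to be a caterpillar-type tree, so that the insertions contributing to a chosen distinguished one-loop diagram do not cancel, and by certifying non-vanishing uniformly in $n$ either by propagating an explicit base case ($n=2$) along the caterpillar, or by evaluating $\sigma_3\cdot D_n$ against a suitable weight system (e.g.\ the $\sl_2$- or a $\gl_N$-weight system) and exhibiting a nonzero value. Once $\sigma_3\cdot D_n\neq 0$ is established, the conclusion $Z_\Phi(T_n)_{\le 2n+3}=Z_{\Phi'}(T_n)_{\le 2n+3}$ and $Z_{\Phi'}(T_n)_{2n+4}-Z_\Phi(T_n)_{2n+4}=\sigma_3\cdot D_n\neq 0$ follows from the first paragraph.

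Finally, for the horizontal refinement it suffices to note that $\GRT_1$ preserves the sub-torsor of horizontal associators and that $\sigma_3\in\grt_1$; hence if $\Phi$ is horizontal then so is $\Phi'=\exp(\sigma_3)\cdot\Phi$, and the computation of $\sigma_3\cdot D_n$ above applies verbatim. This completes the proposed proof.
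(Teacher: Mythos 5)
Your high-level skeleton matches the paper's: take $T_n$ whose Kontsevich invariant has the form $1+D_n+(\deg\ge 2n+2)$ with $D_n$ a degree-$(2n+1)$ tree (the paper cites \cite[Proposition~13.1]{HaMa00}, \cite[Proposition~E.24]{Oht02} for this, with $D_n=J_{2n+1}$), change the associator by something whose leading correction is the degree-$3$ element ($J_3$, resp.\ $\sigma_3$), and reduce the claim to a non-vanishing of the degree-$(2n+4)$ correction, which is exactly the commutator $[J_3,D_n]$. But the crux of the theorem is precisely that non-vanishing, and you do not prove it: you only say you ``expect'' it can be handled, offering two strategies. One of them provably fails: evaluating against the $\sl_2$- or $\gl_N$-weight systems cannot certify $\sigma_3\cdot D_n\neq 0$, because (as shown in Section~\ref{sec:weight_system} of this paper, and this is the content of Theorem~\ref{thm:weight_system}) the images of these weight systems on $\A(\downarrow\downarrow)$ are commutative, so they annihilate every commutator $[J_3,D_n]$. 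Moreover, your guiding claim that ``the tree-level reduction of $\sigma_3\cdot D_n$ necessarily vanishes'' and that the non-vanishing must be sought in loop degree $\ge 1$ is wrong, and it points the computation in the wrong direction: the paper's Proposition~\ref{prop:J3J2n+1} detects $[J_3,J_{2n+1}]\neq 0$ exactly in the tree part, via the map $\tau$ to colored tree diagrams, Levine's map $\eta$ into $\Q C\otimes L(C)$, a projection onto a fixed multidegree, and a Hall-basis computation whose decisive coefficient is $2n-2\neq 0$ for $n\ge 2$ (the tree part of the Kontsevich invariant of a string link is \emph{not} associator-independent; only suitable reductions of it, such as Milnor invariants, are).

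Two smaller points. First, your opening claim that any two associators over $\K$ differ by a unique element of $\GRT_1(\K)$ is not correct for the paper's notion of associator in $\widehat{\A}(\downarrow\downarrow\downarrow)_\K$ (which need not be horizontal); the $(\GRT,\GT)$-torsor is on Drinfeld associators $M(\K)$. For a general $\Phi$ one should instead produce $\Phi'$ by twisting via a symmetric $F=1+\alpha J_3+(\deg\ge 4)$ and use $Z_{\Phi'}(T)=F\cdot Z_\Phi(T)\cdot F^{-1}$ (\cite[Theorem~7]{LeMu96CM}); the $\GRT_1$-element $\mathrm{Exp}(2\alpha\sigma_3)$ and Lemma~\ref{lem:twistor} are needed only for the horizontal refinement, which is how the paper argues. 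Second, your degree bookkeeping (equality up to $2n+3$, discrepancy $[F_3,D_n]$ in degree $2n+4$) is fine, but without a proof of $[J_3,J_{2n+1}]\neq 0$ the argument is incomplete at its essential step.
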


Here, a horizontal associator is an associator obtained from a Drinfeld's associator given as non-commutative formal power series in two variables introduced in \cite{Dri91} (see Section~\ref{subsec:associator}).
The proofs of Proposition~\ref{prop:deg_6} and Theorem~\ref{thm:deg_8} will be given in Sections~\ref{subsec:2_arrows} and \ref{subsec:main_theorem}, respectively.

\begin{remark}
\label{rem:deg_8}
For instance, by letting $\K=\C$, Theorem~\ref{thm:deg_8} allows us to compare the Kontsevich invariants associated with an even rational horizontal associator and with the KZ associator.
\end{remark}

As explained above, the Kontsevich invariants of $2$-component string links depend on the choice of the associator $\Phi$.
Nevertheless, one can obtain infinitely many invariants that are independent of $\Phi$ by composing the Kontsevich invariant with weight systems.
Here, a weight system is a certain way to obtain values from Jacobi diagrams using a Lie algebra (see Section~\ref{subsec:Lie_alg_weight}).

\begin{theorem}
\label{thm:weight_system}
    There are infinitely many weight systems $W$ on two strands such that the composition $W \circ Z_{\Phi}$ does not depend on the choice of the associator $\Phi$. 
\end{theorem}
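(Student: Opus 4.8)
The plan is to exploit the fact that the associator-dependence of $Z_\Phi$ on $2$-component string links is, by Theorem~\ref{thm:deg_8} and the general theory, governed by the action of (the unipotent part of) the Grothendieck--Teichm\"uller group, hence by the Lie algebra $\grt$ acting on the space of Jacobi diagrams on two strands. Concretely, changing the associator $\Phi \rightsquigarrow \Phi'$ changes $Z_\Phi(T)$ by conjugating/twisting via a group-like element associated to an element of $\GT$, and this twisting preserves the value $W\circ Z_\Phi(T)$ precisely when the weight system $W$ annihilates the relevant $\grt$-orbit directions. So the strategy is: (i) identify the subspace $V \subseteq \A(\uparrow_2)$ (Jacobi diagrams on two strands) spanned by all differences $Z_\Phi(T) - Z_{\Phi'}(T)$ over all $2$-component string links $T$ and all pairs of associators $\Phi,\Phi'$; (ii) show this $V$ is a proper subspace, indeed of infinite codimension, in each degree it is ``thin'' enough that the annihilator is infinite-dimensional; (iii) produce infinitely many weight systems $W$ (coming from Lie algebras, via Section~\ref{subsec:Lie_alg_weight}) that vanish on $V$; then $W\circ Z_\Phi = W\circ Z_{\Phi'}$ for all $\Phi,\Phi'$, which is the assertion.

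First I would make (i) precise: by the standard transformation formula for the Kontsevich invariant under change of associator (Le--Murakami, or the $\GT$-action as in Furusho's framework), the difference $Z_\Phi(T)-Z_{\Phi'}(T)$ lies in the image of the infinitesimal $\grt$-action on $Z_\Phi(T)$, which for string links on two strands lands in the ideal generated by the image of $\grt$ under the map sending the generators $\sigma_{2k+1}$ of $\grt$ to the corresponding ``horizontal'' chord-diagram elements attached to the two strands. Since a $2$-component string link has only two strands, the relevant combinatorial space is small; I would pin down that $V$ is contained in the span of diagrams built from the single element $t_{12}$ (the chord between the two strands) together with the $\grt$-primitives $\psi_{2k+1}(t_{12})$ and their ``interior'' insertions. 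The key structural input is Proposition~\ref{prop:deg_6}: $V$ vanishes in degrees $\le 6$, so $V$ is concentrated in degree $\ge 7$ and, within each degree, spanned by a controlled (in fact finite but slowly growing) set of diagrams.

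Next, for (iii) I would use the classification of weight systems coming from semisimple (or more generally quadratic) Lie algebras, or alternatively from the $\gl_N$/$\sl_N$ families and their stable limit, to write down an explicit infinite family $\{W_i\}$. For two strands, a Lie-algebra weight system evaluates a Jacobi diagram by contracting the Lie bracket and the invariant form along the diagram; the two boundary strands carry two copies of the chosen representation, and the ``$t_{12}$'' chord becomes the split Casimir. I would choose representations / Lie algebras so that the contractions corresponding to the basis of $V$ all evaluate to the same element for $\Phi$ and $\Phi'$ — this is automatic if $W_i$ kills every diagram in $V$ outright, and that in turn follows because each such diagram contains an ``internal trivalent'' sub-configuration forced by the $\grt$-insertion whose Lie-algebraic evaluation one can arrange to vanish (e.g.\ by antisymmetry when an odd symmetric combination appears, mirroring the classical fact that $\grt$-generators are detected only by more refined, non-Lie weight systems). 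Carrying this out in even one infinite family (say $\sl_2$ with varying representations, or the $\gl_N$ stable weight system) gives infinitely many $W$ with $W\circ Z_\Phi$ associator-independent.

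The main obstacle I expect is step (ii)/(iii) jointly: showing that infinitely many of the naturally available weight systems genuinely annihilate $V$, rather than merely finitely many. Lie-algebra weight systems are known not to detect $\grt$ in low degree (this underlies why braid invariants up to degree $4$-ish are associator-independent under such weight systems), but one must argue this persists in all degrees for the specific two-strand diagrams spanning $V$ — equivalently, that the ``new'' part of $Z_\Phi$ introduced by higher $\grt$-generators $\sigma_{2k+1}$ lives in the kernel of a cofinal family of Lie weight systems. I would handle this by a dimension/parity argument: the diagrams in $V$ are odd under a strand-reversal or mirror symmetry (inherited from the oddness of the $\grt$-generators), while the targeted weight systems $W_i$ factor through the symmetric part; hence $W_i(V)=0$ for all $i$ in the family. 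Making that symmetry statement airtight — identifying the precise involution on $\A(\uparrow_2)$ under which $V$ is anti-invariant and the $W_i$ are invariant — is the crux, and once it is in place the theorem follows immediately.
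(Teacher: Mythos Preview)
Your plan has a genuine gap, and it misses the one-line observation that makes the paper's proof work.

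The paper's argument is this: by the Le--Murakami twist formula (recalled in Section~\ref{subsec:Kontsevich}), for any two associators one has $Z_{\Phi'}(T)=F\cdot Z_{\Phi}(T)\cdot F^{-1}$ for some $F\in\widehat{\A}(\downarrow\downarrow)_\K$. Since a Lie-algebra weight system $W$ is an \emph{algebra homomorphism}, this gives
\[
W(Z_{\Phi'}(T))=W(F)\,W(Z_{\Phi}(T))\,W(F)^{-1}.
\]
Hence $W\circ Z_\Phi$ is associator-independent as soon as the image $\Im W$ is a \emph{commutative} subalgebra of $U(\g)^{\otimes 2}$ (or $\End(V)^{\otimes 2}$). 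The entire content of Section~\ref{sec:weight_system} is then an explicit verification that $\Im W$ is commutative for $W\in\{W_{\sl_2},\,W_{\gl_N}^{\St},\,W_{\sl_N}^{\St},\,W_{\so_N}^{\St},\,W_{\sp_{2N}}^{\St}\}$: for the standard-representation weight systems this is a direct diagrammatic Casimir calculation showing the image is spanned by one or two commuting operators (essentially $\id\otimes\id$ and the flip $\sum e_{i,j}\otimes e_{j,i}$), and for $W_{\sl_2}$ it follows from the $\sl_2$ skein relation~\eqref{eq:skein_sl2} by induction on degree. Varying $N$ gives infinitely many such $W$.

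Your proposal never invokes the multiplicativity of $W$; instead you try to isolate the subspace $V$ of differences $Z_\Phi(T)-Z_{\Phi'}(T)$ and then force $W(V)=0$ by a parity/involution argument. That final step is where the proposal fails. The elements that actually span $V$ in low degree are commutators such as $J_3\cdot J_{2n+1}-J_{2n+1}\cdot J_3$ (Proposition~\ref{prop:J3J2n+1}); for $n\ge 2$ the diagram $J_{2n+1}$ has $2n$ legs on one strand and $2$ on the other, so it is \emph{not} invariant (or anti-invariant) under swapping the two strands, nor under any natural mirror involution, and no such parity argument isolates $V$ in the kernel of a Lie-algebra weight system. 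In short, the ``crux'' you identify is not a workable mechanism, whereas the commutativity-of-the-image argument bypasses any description of $V$ entirely.
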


See Theorem~\ref{thm:weight_system_precise} for a more precise statement. It may be worth emphasizing that the above $W \circ Z_{\Phi}$ are invariants not only for $2$-component string links with $q$-structures but also for $2$-component string links themselves since $q$-structures for them are uniquely determined.

These results also have an application to the (motivic) Galois action on proalgebraic string links studied by Furusho in \cite{Fur20} as follows. Let $\GT(\K)$ and $\GRT(\K)$ be the proalgebraic Grothendieck--Teichm\"uller group and the graded Grothendieck--Teichm\"uller group, respectively. Since $\GT(\K)$ and $\GRT(\K)$ act on the set of horizontal assocaitors freely and transitively, the unipotent parts $\GT_1(\K)$ and $\GRT_1(\K)$ also act on the Kontsevich invariants $Z_{\Phi}$ with horizontal associators $\Phi$ by replacements of chosen horizontal associators. 
As is made explicit by Furusho, such actions of $\GT_1(\K)$ and $\GRT_1(\K)$ on $Z_{\Phi}$ are compatible with the $(\GRT(\K), \GT(\K))$-bitorsor structure on the set of associators.
One can define, in a canonical manner, a $\GT(\K)$-action on proalgebraic tangles and a $\GRT(\K)$-action on the corresponding Jacobi diagrams. 
Recall here the fact that there is an embedding of the motivic Galois group of mixed Tate motives of $\Spec(\Z)$ into $\GT(\K)$, 
which is deduced from Brown's result~\cite{Bro12}.
It follows that proalgebraic tangles are endowed with a structure of mixed Tate motives of $\Spec(\Z)$ through the $\GT(\K)$-action.

Furusho further considered the case where tangles are $n$-component strings and showed that the $\GT_1(\K)$-action on proalgebraic $n$-component string links is given by an inner conjugation.
As a result, one may see that the $\GT_1(\K)$-action on proalgebraic $n$-component string links is faithful for $n \geq 3$ and trivial for $n=1$.
For the case where $n=2$, the action of $\GT_1(\K)$ on proalgebraic $2$-component string links remains unclear, although we know that its action on the proalgebraic pure braids on two strands is trivial.

As an application of Theorem~\ref{thm:deg_8}, we can prove Theorem~\ref{thm:GT_1_2-string_link} in Section~\ref{subsec:GT-action}, which gives a partial answer to \cite[Problem~3.17]{Fur20} asking whether the $\GT_1(\K)$-action is faithful or not.

\begin{theorem}
\label{thm:GT_1_2-string_link}
The action of the unipotent part $\GT_1(\K)$ of $\GT(\K)$ on proalgebraic $2$-component string links is non-trivial. 
\end{theorem}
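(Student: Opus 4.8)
The plan is to derive the theorem from Theorem~\ref{thm:deg_8} together with Furusho's identification of the $\GT_1(\K)$-action on proalgebraic $2$-component string links with the change-of-horizontal-associator action on Kontsevich invariants.

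First I would fix an integer $n\ge 2$ and take the $2$-component string link $T_n$ and the two horizontal associators $\Phi,\Phi'$ with coefficients in $\K$ supplied by Theorem~\ref{thm:deg_8}, so that $Z_{\Phi}(T_n)_{\le 2n+3}=Z_{\Phi'}(T_n)_{\le 2n+3}$ while $Z_{\Phi}(T_n)_{2n+4}\neq Z_{\Phi'}(T_n)_{2n+4}$; in particular $Z_{\Phi}(T_n)\neq Z_{\Phi'}(T_n)$. The key point to record is that $\Phi$ and $\Phi'$ lie in one common orbit of the prounipotent part: by the construction underlying the proof of Theorem~\ref{thm:deg_8} the horizontal associator $\Phi'$ is obtained from $\Phi$ without changing the associator parameter, so that $\Phi$ and $\Phi'$ lie in a single $\GRT_1(\K)$-orbit (equivalently, $\GT_1(\K)$-orbit). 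Since the set of horizontal associators with a fixed parameter carries a $(\GRT_1(\K),\GT_1(\K))$-bitorsor structure, there is then a unique $g\in\GT_1(\K)$ with $\Phi'=\Phi\cdot g$.

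Next I would invoke Furusho's compatibility of the $\GT_1(\K)$-action on proalgebraic $2$-component string links with this bitorsor structure. Writing $[T_n]$ for the image of $T_n$ in the proalgebraic $2$-component string links and keeping the notation $Z_{\Phi},Z_{\Phi'}$ for the induced maps on this proalgebraic object, one has the equivariance $Z_{\Phi}(g\cdot[T_n])=Z_{\Phi'}([T_n])$ (up to replacing $g$ by $g^{-1}$, according to the chosen normalization of the action). Now suppose, for contradiction, that $g\cdot[T_n]=[T_n]$. Applying $Z_{\Phi}$ and using that the proalgebraic invariant restricts to the usual Kontsevich invariant on honest string links, we obtain $Z_{\Phi'}(T_n)=Z_{\Phi'}([T_n])=Z_{\Phi}(g\cdot[T_n])=Z_{\Phi}([T_n])=Z_{\Phi}(T_n)$, contradicting the choice of $T_n$. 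Hence $g\cdot[T_n]\neq[T_n]$, so $\GT_1(\K)$ does not act trivially on proalgebraic $2$-component string links, which is the sought partial answer to \cite[Problem~3.17]{Fur20}.

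The entire content of the argument sits in Theorem~\ref{thm:deg_8}; the remaining work is bookkeeping about the two torsor structures, and that is where I expect the only genuine obstacle to lie. Concretely, one must (i) verify that the horizontal associator $\Phi'$ produced by Theorem~\ref{thm:deg_8} has the same associator parameter as $\Phi$ --- this is exactly what makes the interpolating element $g$ land in the unipotent part $\GT_1(\K)$ and not merely in $\GT(\K)$, and it is where the horizontality clause of Theorem~\ref{thm:deg_8} is essential --- and (ii) pin down the precise form of Furusho's equivariance statement (on which side $\GT(\K)$ acts, and whether $g$ or $g^{-1}$ appears), so that a fixed point of $g$ really forces $Z_{\Phi}=Z_{\Phi'}$ on $T_n$. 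Neither point affects the logical skeleton: any pair of horizontal associators lying in the same $\GT_1(\K)$-orbit whose Kontsevich invariants disagree on a single $2$-component string link certifies that $\GT_1(\K)$ moves the corresponding proalgebraic string link.
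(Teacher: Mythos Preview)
Your proposal is correct and follows essentially the same route as the paper: both arguments pick the horizontal associators $\Phi,\Phi'$ and the string link $T_n$ from Theorem~\ref{thm:deg_8}, use the $(\GRT_1(\K),\GT_1(\K))$-bitorsor structure on $M_1(\K)$ to produce $\sigma\in\GT_1(\K)$ with $\Phi'=\Phi\bullet\sigma$, and then invoke Furusho's equivariance $Z_{\Phi}(\rho_\epsilon(\sigma)(\Gamma))=Z_{\Phi\bullet\sigma}(\Gamma)$ to conclude that $\sigma$ moves $[T_n]$. Your caveat~(i) is in fact automatic here, since in the paper's conventions a horizontal associator is by definition the image of an element of $M_1(\K)$, so both $\Phi$ and $\Phi'$ already have parameter $\mu=1$; caveat~(ii) is resolved by the displayed formula preceding~\eqref{eq:conj_act}.
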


Theorem~\ref{thm:GT_1_2-string_link} tells us that the $\GT_1(\K)$-action on proalgebraic $2$-component string links behaves completely differently from that on proalgebraic $1$-component string links (equivalently, proalgebraic knots).
The latter action is known to be trivial, which decomposes the proalgebraic knots into a direct sum of (infinitely many) pure Tate motives over $\Z$.
The current situation towards \cite[Problem~3.17]{Fur20} is summarized in Table~\ref{tab:GT_action_string_links}.

\begin{table}[h]
\centering
\begin{tabular}{|l||c|c|c|} \hline
 & $n =1$ & $n=2$ & $n \geq 3$\\ \hline \hline
kernel $\Ker \rho$  & $\Ker \rho =\GT_1(\K)$  & $\Ker \rho \subsetneq  \GT_1(\K)$ & $\Ker \rho = \{e\}$ \\ 
 & (\cite{Fur20}) & (Theorem~\ref{thm:GT_1_2-string_link}) & (\cite{Fur20}) \\ \hline
faithfulness & far from faithful & ? & faithful \\ \hline
\end{tabular}
\caption{The kernel and faithfulness of the $\GT(\K)$-action $\rho$ on the algebra of proalgebraic $n$-component string links.}
\label{tab:GT_action_string_links}
\end{table}

From Theorem~\ref{thm:GT_1_2-string_link},
we can also expect that a conjectural action of the profinite Grothendieck--Teichm\"uller group $\widehat{GT}$ on profinite $2$-component string links would not factor through $\widehat{\Z}^{\times}$ (cf.~\cite[Remark~2.44]{Fur17} where a similar question is mentioned for profinite knots). 
In contrast, the action of $\widehat{GT}$ on the profinite pure braid group with two strands is known to factor through $\widehat{\Z}^{\times}$ (cf.~\cite{Ih91}, \cite{Fur17}).
Recently, related work has appeared from the operadic viewpoint~(\cite{RoSi25}, \cite{Wil24}). 

We conclude this introduction with a remark on the original question posed by Furusho (\cite[Problem~3.17]{Fur20}). This question can be reformulated as follows: For any distinct horizontal associators $\Phi$ and $\Phi'$, does $Z_{\Phi} \neq Z_{\Phi'}$ hold as maps on proalgebraic $2$-string links with the unique $q$-structure? It is still open, but, as a byproduct of our work, we find that a generalized version of the question does not hold due to the following proposition shown in Section~\ref{subsec:main_theorem}.

\begin{proposition}
\label{prop:general_replacement}
There are two distinct associators $\Phi$ and $\Phi'$ such that at least one of the two is not horizontal and that $Z_{\Phi}(T) = Z_{\Phi'}(T)$ holds for any $2$-component string link $T$.
\end{proposition}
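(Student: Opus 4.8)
The plan is to realize the change of associator on $2$-component string links as a conjugation and then to produce a \emph{non-horizontal} change of associator that, nevertheless, conjugates trivially on all such string links.

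First I would recall the description of the dependence of $Z_\Phi$ on $\Phi$ that already underlies Section~\ref{subsec:main_theorem}: every associator $\Phi'$ can be written as $F\triangleright\Phi$ for a twist $F$ (a group-like element of $\hat{\A}(\uparrow\uparrow)$ with the counit conditions making $F\triangleright\Phi$ again an associator), and for a $2$-component string link $T$ — which carries a unique $q$-structure, namely the parenthesization of two points — one has
\[
Z_{F\triangleright\Phi}(T)=\gamma(F)^{-1}\,Z_\Phi(T)\,\gamma(F),
\]
where $\gamma(F)\in\hat{\A}(\uparrow\uparrow)$ is obtained from $F$ by the cabling operation attached to that parenthesization; for two strands this cabling is the identity, so $\gamma(F)=F$. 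Hence $Z_{F\triangleright\Phi}(T)=Z_\Phi(T)$ for \emph{every} $2$-component string link $T$ as soon as $\gamma(F)$ commutes with $Z_\Phi(T)$ for all such $T$; equivalently, writing $\g$ for the (associated graded) pro-Lie algebra of proalgebraic $2$-component string links inside $\operatorname{Prim}\hat{\A}(\uparrow\uparrow)$, as soon as $\log\gamma(F)$ lies in the centralizer of $\g$. This reduces the proposition to the following: fixing a horizontal associator $\Phi$ (say $\Phi=\Phi_\KZ$), produce a twist $F\ne 1$ that is not horizontal — so that $\Phi'=F\triangleright\Phi$ is a non-horizontal associator, hence automatically distinct from $\Phi$ — and for which $\log\gamma(F)$ centralizes $\g$.

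I would build such an $F$ degree by degree. Writing $F=\exp\bigl(\sum_{d\ge 1}g_d\bigr)$ with $g_d\in\operatorname{Prim}\hat{\A}(\uparrow\uparrow)$ homogeneous of degree $d$, at each degree the two requirements on $F$ become a \emph{linear} system in $g_d$ (given $g_{<d}$): admissibility — that $F\triangleright\Phi$ be an associator — is the linearized pentagon and hexagon equations, and the centralizing condition is the vanishing in degree $d$ of the brackets $[\,g_d\,,\,Z_\Phi(T)\,]$ against a spanning family of $2$-component string links $T$. By Proposition~\ref{prop:deg_6} there is nothing to arrange through degree $6$; from degree $7$ on, the point is that this linear system admits a solution $g_d$ that does not lie in the horizontal part of the twist Lie algebra. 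This should be possible precisely because $\g$ is ``thin'': being the Lie algebra attached to homology cylinders over $\Sigma_{0,3}$, it is genuinely smaller than $\operatorname{Prim}\hat{\A}(\uparrow\uparrow)$, so its centralizer is strictly larger than the center $\K\,t_{12}$ and already meets the admissible twists outside the horizontal ones. One then checks that the induction propagates — the obstruction to continuing at the next degree lies again in the relevant cocycle space and is cleared by the torsor structure on the set of associators together with the fact that the centralizer of $\g$ is a Lie subalgebra. (In practice I expect it to be cleaner to exhibit one explicit low-degree non-horizontal twist doing the job and then run this argument only to correct it in higher degrees.) With $F$ so obtained, setting $\Phi':=F\triangleright\Phi$ finishes the proof: $\Phi'$ is a non-horizontal associator, hence $\Phi'\ne\Phi$, and $Z_{\Phi'}(T)=\gamma(F)^{-1}Z_\Phi(T)\gamma(F)=Z_\Phi(T)$ for every $2$-component string link $T$.

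The main obstacle is the structural input just used: that the Lie algebra $\g$ of proalgebraic $2$-component string links has a centralizer large enough to contain a non-horizontal admissible twist. This is exactly the feature distinguishing $n=2$ from $n\ge 3$, where the $\GT_1$-action — hence a fortiori the full twist action — on proalgebraic $n$-component string links is faithful, so no non-trivial associator change can act trivially. Quantifying the gap between $\g$ and its centralizer, and doing so compatibly with admissibility in every degree, is where the analysis of Section~\ref{subsec:main_theorem} and the computations behind Theorem~\ref{thm:deg_8} are really used; everything else — the conjugation formula, the freeness and transitivity of the torsor actions, and the order-by-order solvability — is formal.
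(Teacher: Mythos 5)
Your reduction is the right one --- a change of associator acts on $Z$ of a $2$-component string link by conjugation by the twist, so it suffices to exhibit a twist whose conjugation action is trivial --- but the proposal never actually produces the required twist, and that existence statement is the whole content of the proposition. You defer it to a degree-by-degree linear system whose solvability you only assert (``should be possible'', ``the induction propagates''), resting on an unproved structural claim that the centralizer of the Lie algebra of proalgebraic $2$-component string links is large enough to meet the admissible twists outside the horizontal ones. Nothing in your argument establishes this, and you acknowledge it as the ``main obstacle''. Two further points in the set-up are off: (i) admissibility is not a nontrivial constraint to be solved order by order --- as recalled in Section~\ref{subsec:associator}, twisting by \emph{any} symmetric $F\in\widehat{\A}(\downarrow\downarrow)_\K$ with $\varepsilon_1(F)=\varepsilon_2(F)=1$ already yields an associator, so there are no linearized pentagon/hexagon equations to satisfy; (ii) the inference ``$F$ is not horizontal, hence $\Phi'=F\triangleright\Phi$ is a non-horizontal associator, hence $\Phi'\neq\Phi$'' is unjustified --- non-horizontality of the twist does not by itself tell you anything about the twisted associator, so you have verified neither the distinctness nor the non-horizontality required by the statement.

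The paper closes exactly this gap with an explicit one-line construction that your plan hints at (``exhibit one explicit low-degree non-horizontal twist'') but does not carry out: take $F=1+J$ where $J$ is the degree~$2$ diagram with two crossing chords between the strands. By Corollary~\ref{cor:A_leq2_is_in_center}, $\A_{\leq 2}(\downarrow\downarrow)$ is central in $\A(\downarrow\downarrow)$, so $F\cdot Z_{\Phi}(T)\cdot F^{-1}=Z_{\Phi}(T)$ for every $2$-component string link $T$ on the nose --- no induction and no analysis of the centralizer of $\g$ is needed, and in particular Theorem~\ref{thm:deg_8} plays no role here. The twist changes the degree~$2$ term of the associator by the nonzero element $2([t_{12},t_{23}]+t_{12}t_{13}-t_{23}t_{13})$, which gives $\Phi'\neq\Phi$, and non-horizontality follows not from any property of $F$ but from the rigidity of horizontal associators: their degree~$2$ term is forced to be $\frac{1}{24}[t_{12},t_{23}]$ by \cite[Theorem~1]{Fur10}, so $\Phi'$ cannot be horizontal when $\Phi$ is. If you replace your inductive existence argument by this explicit $F$ and argue non-horizontality via the degree~$2$ term rather than via the twist, your proof becomes complete and coincides with the paper's.
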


\subsection*{Acknowledgments}
The authors would like to thank Dror Bar-Natan for his suggestion on Proposition~\ref{prop:J3J2n+1}.
They also express their gratitude to \mbox{Katsumi} Ishikawa for his help with Lemma~\ref{lem:box}.
The authors are grateful to Hidekazu Furusho for his careful reading of the manuscript, and to 
Adrien Brochier, 
Martin Gonzalez, 
Masanobu Kaneko,
Yusuke Kuno,  
Masanori Morishita, 
Hiroaki Nakamura, 
Masatoshi Sato, 
and Yuji Terashima 
for valuable discussions.
This study was supported in part by JSPS KAKENHI Grant Numbers JP23K12974, JP25K06954, and by Institute of Mathematics for Industry, Joint Usage/Research Center in Kyushu University. (FY2025 Workshop(II) ``Topology and Computer 2025'' (2025a038).)

\section{Preliminaries}
\label{sec:prelim}

\subsection{Jacobi diagrams}
\label{subsec:Jacobi}

Let $X$ be a (possibly disconnected) oriented compact $1$-manifold.
A \emph{Jacobi diagram} based on $X$ is a (possibly empty) uni-trivalent graph whose univalent vertices are attached to $X$ and trivalent vertices have cyclic orders.
We use dashed lines for uni-trivalent graphs and solid lines for $X$ as in \cite{Oht02}.
In figures of Jacobi diagrams, the cyclic order of each trivalent vertex is assumed counterclockwise.
We write $\A(X)$ for the $\Q$-vector space generated by Jacobi diagrams based on $X$ subject to the AS, IHX, and STU relations:
\begin{gather*}
\begin{tikzpicture}[scale=0.5, baseline={(0,-0.1)}, densely dashed]
  \draw (0,0) -- (-1,1);
  \draw (0,0) -- (1,1);
  \draw (0,0) -- (0,-1);
\end{tikzpicture}%
+\!\!
\begin{tikzpicture}[scale=0.5, baseline={(0,-0.1)}, densely dashed]
  \coordinate (origin) at (0,0);
  \draw (origin) .. controls +(1,0.5) and +(1,-0.5) .. (-1,1);
  \draw (origin) .. controls +(-1,0.5) and +(-1,-0.5) .. (1,1);
  \draw (origin) -- (0,-1);
\end{tikzpicture}%
\!\!=0,
\qquad
\begin{tikzpicture}[scale=0.5, baseline={(0,-0.1)}, densely dashed]
  \draw (-1,1) -- (1,1) ;
  \draw (-1,-1) -- (1,-1) ;
  \draw (0,1) -- (0,-1) ;
\end{tikzpicture}%
-\ 
\begin{tikzpicture}[scale=0.5, baseline={(0,-0.1)}, densely dashed]
  \draw (-1,1) -- (-1,-1) ;
  \draw (1,1) -- (1,-1) ;
  \draw (-1,0) -- (1,0) ;
\end{tikzpicture}%
\ +
\begin{tikzpicture}[scale=0.5, baseline={(0,-0.1)}, densely dashed]
  \draw (-1,1) -- (1,-1);
  \draw (-1,-1) -- (1,1);
  \draw (-0.5,-0.5) -- (0.5,-0.5);
\end{tikzpicture}%
=0,
\\
\begin{tikzpicture}[scale=0.5, baseline={(0,0.3)}, densely dashed]
\draw[->,solid] (0,0) -- (4,0);
\draw (2,0) -- (2,1) ;
\draw (2,1) -- (1,2) ;
\draw (2,1) -- (3,2) ;
\end{tikzpicture}%
\ -\ 
\begin{tikzpicture}[scale=0.5, baseline={(0,0.3)}, densely dashed]
\draw[->,solid] (0,0) -- (4,0);
\draw (1,0) -- (1,2) ;
\draw (3,0) -- (3,2) ;
\end{tikzpicture}%
\ +\ 
\begin{tikzpicture}[scale=0.5, baseline={(0,0.3)}, densely dashed]
\draw[->,solid] (0,0) -- (4,0);
\draw (1,0) -- (3,2) ;
\draw (3,0) -- (1,2) ;
\end{tikzpicture}%
=0,
\end{gather*}
where the rest of the diagrams are the same in each relation.
Note that we are working with abstract graphs, and what appear to be $4$-valent vertices are not actual vertices.
The \emph{degree} $\deg J$ of a Jacobi diagram $J$ is defined to be half the number of vertices of $J$.
Let $\A_d(X)$ denote the subspace of $\A(X)$ generated by Jacobi diagrams of degree $d$.
The completion $\varprojlim_d \A(X)/\A_{\geq d}(X)$ is denoted by $\widehat{\A}(X)$.
For a bijection $f$ between some endpoints of $X$ and $X'$ compatible with their orientations, we have a linear map $\A(X)\times \A(X')\to \A(X\cup_f X')$.
The image of $(J,J')$ is denoted by $J\cdot J'$.
For a word $\epsilon$ in letters $+$ and $-$, let $\downarrow^{\epsilon}$ denote the disjoint union  of intervals indexed by the letters in $\epsilon$, e.g., ${\downarrow^{+-+}} = {\downarrow\uparrow\downarrow}$.
When $\epsilon = ++\cdots+$ of length $n$, we simply write $\downarrow^{\otimes n}$.
In the case $X = X' = \downarrow^{\epsilon}$, by stacking $X$ on $X'$, the above map makes $\A(\downarrow^{\epsilon})$ and $\widehat{\A}(\downarrow^{\epsilon})$ algebras.
Note that the empty Jacobi diagram on $\downarrow^{\epsilon}$ is the identity element $1$.
For instance, we will discuss the algebra structure of $\A(\downarrow\downarrow)$ in Section~\ref{subsec:2_arrows}.
We write $J\otimes J'$ for the disjoint union of Jacobi diagrams $J$ and $J'$.
For a field $\K$ of characteristic $0$, let $\A(X)_\K = \A(X)\otimes_\Q \K$ and $\widehat{\A}(X)_\K = \widehat{\A}(X)\hat{\otimes}_\Q \K$.

Let us recall the relation between the Drinfeld--Kohno Lie algebra and the algebra of Jacobi diagrams  $\mathcal{A}(\downarrow^{\otimes n})$.  For $n \geq 2$, the \emph{Drinfeld--Kohno Lie algebra} $\mathfrak{t}_n$ is defined as the (completed) graded Lie algebra over $\Q$ generated by $t_{ij}$ $(i\neq j \in \{1, \ldots, n\})$ of degree $1$ subject to the following relations:
\[
t_{ij}=t_{ji},\quad [t_{ij}, t_{kl}]=0\ (\#\{i,j,k,l\}=4), \quad [t_{ij}, t_{ik}+t_{jk}]=0\ (\#\{i,j,k\}=3).
\]
We denote by $U(\mathfrak{t}_n)$ the universal enveloping algebra of $\mathfrak{t}_n$. Then, we have the injective algebra homomorphism
\[
\iota\colon U(\mathfrak{t}_n)\rightarrow \A(\downarrow^{\otimes n})
\]
which sends the generator $t_{ij}$ to the Jacobi diagram consisting of a single dashed edge connecting the $i$th and $j$th strings as follows (\cite{Bar95JKTRV}, \cite{HaMa00}):
\[
\begin{tikzpicture}
\begin{scope}
\node at (-2, 0.6) {$t_{ij}$};
\node at (-1, 0.5) {$\mapsto$};
\foreach \x in {0, 0.5, 1.5, 2, 2.5, 3.5, 4, 4.5, 5.5, 6}
\draw[->,rounded corners=4pt] (\x,1.4) -- (\x, 0);
\foreach \x / \y in {0/1, 0.5/, 1.5/, 2/i, 2.5/, 3.5/, 4/j, 4.5/, 5.5/, 6/n}
\node at (\x, -0.25) {$\y$};
\foreach \x in {1, 3, 5}
\node at (\x, 0.5) {$\cdots$};
\draw[densely dashed,rounded corners=4pt] (2,0.7) -- (4, 0.7);
\end{scope}
\end{tikzpicture}
\]

Let $\A^{\hor}(\downarrow^{\otimes n})\coloneqq\iota(U(\mathfrak{t}_n)) \subset \A(\downarrow^{\otimes n})$.  An element $J \in \A^{\hor}(\downarrow^{\otimes n})$ is called a \emph{horizontal Jacobi diagram}.
\begin{remark} \label{rem:kdalg}
It is known that $\mathfrak{t}_3$ is a central extension of the free Lie algebra with two generators. More precisely, we have
\[
\mathfrak{t}_3 \simeq \mathfrak{f}(t_{12}, t_{23}) \oplus \Q\cdot (t_{12}+t_{23}+t_{13}),
\]
where $\mathfrak{f}(t_{12}, t_{23})$ denotes the free Lie algebra on $t_{12}$ and $t_{23}$ over $\Q$.
\end{remark}

Unless otherwise stated, for $v \in U(\mathfrak{t}_n)$, whenever $v$ is considered as an element of $U(\mathfrak{t}_m)$ $(m >n)$, we always assume that $v$ is embedded into $U(\mathfrak{t}_m)$ by the canonical embedding $\iota\colon U(\mathfrak{t}_n)\rightarrow  U(\mathfrak{t}_m)$ given by $\iota(t_{ij})=t_{ij}$. Similarly to $\A(X)$, the completion of $U(\mathfrak{t}_n)$ with respect to degree is denoted by $\widehat{U}(\mathfrak{t}_n)$.

\subsection{Lie algebra weight systems for Jacobi diagrams}
\label{subsec:Lie_alg_weight}

Let $(\g, \langle \cdot, \cdot \rangle)$ be a metrized Lie algebra over $\K$, that is, a Lie algebra $\g$ over $\K$ equipped with a symmetric non-degenerate  bilinear form $\langle \cdot, \cdot \rangle \colon  \g\otimes \g \to \K$ which is ad-invariant in the sense of \cite[Definition~A.1]{CDM12}, i.e., $\langle \ad_z(x), y \rangle + \langle x, \ad_z(y)\rangle =0 $ for any $x,y,z \in \g$, where $\ad_z(x)=[z,x]$ for any $z,x \in \g$. 
In this paper, when it is clear from the context, we often denote a metrized Lie algebra $(\g, \langle \cdot, \cdot \rangle)$ by $\g$ for brevity.
There is the canonical element $\mathbf{1}_{\g} \in \End_{\K}(\g)= \g \otimes \g^{\ast}$ corresponding to the identity map. 
Note that there is an isomorphism $\g^{\ast} \cong \g$ induced by $\langle \cdot, \cdot \rangle$. 
Therefore, the element $\mathbf{1}_{\g}$ can be regarded as an element of $\g \otimes \g$ and is called the \emph{quadratic Casimir tensor} with respect to $\langle \cdot, \cdot \rangle$. 
It is expressed as $\mathbf{1}_{\g} = \sum_{a=1}^{\dim \g} \bm{v}_a \otimes \bm{v}_a^{\ast}$ for any basis $\{\bm{v}_a\}$ of $\g$ and the dual basis $\{\bm{v}_a^{\ast}\}$ with respect to $\langle \cdot, \cdot \rangle$, i.e., $\langle \bm{v}_a, \bm{v}_b^{\ast}\rangle = \delta_{ab}$. 
In particular, if we take an orthonormal basis $\{\bm{e}_a\}$ of $\g$ with respect to $\langle \cdot, \cdot \rangle$, we have the expression $\mathbf{1}_{\g} = \sum_{a=1}^{\dim \g} \bm{e}_a \otimes \bm{e}_a$. The Lie bracket $[\cdot, \cdot] \in \g^{\ast}\otimes \g^{\ast} \otimes \g$ can also be regarded as an element $\mathbf{t}_{\g} \in \g^{\otimes 3}$ and expressed as $\mathbf{t}_{\g}  = \sum_{i,j,k} c_{ijk} \bm{e}_i \otimes \bm{e}_j \otimes \bm{e}_k$
 in terms of the structure constant $c_{ijk}$ defined as $[\bm{e}_i,\bm{e}_j] = \sum_k c_{ijk} \bm{e}_k$. As is well-known, the constants $c_{ijk}$ are totally antisymmetric tensor and so $\mathbf{t}_{\g}  \in \bigwedge^3 \g$ (\cite[Lemma~A.2, Corollary~A.3]{CDM12}).
 
\begin{example}
We give two typical examples which will be used in Section~\ref{sec:weight_system}.
    \begin{enumerate}[label=(\arabic*)]
        \item For $\g=\gl_N(\K)$, we can take the trace form $B_0$ as $\langle \cdot, \cdot \rangle$, where $B_0$ is defined as  $B_0(x,y) = \tr(xy)$ for $x,y \in \g$.
        \item For a semi-simple Lie algebra $\g$, we can take the Killing form $B$ as $\langle \cdot, \cdot \rangle$  and it is defined by $B(x,y) = \tr(\ad_x \ad_y)$ for $x,y \in \g$. It is known that the Killing form $B$ is proportional to the trace form $B_0$ (cf.~\cite[Section~A.2]{CDM12}).
    \end{enumerate}
\end{example}

Next, we recall the universal $\g$ weight systems for $\A(X)$ in the case where $X$ is the disjoint union of $n$ intervals, i.e., $X=\downarrow^{\otimes n}$. For details, see \cite[Proof of Theorem~10]{LeMu96CM} or \cite[Section~6]{CDM12}. 
Let $\A(X)$ be the algebra of Jacobi diagrams based on $X$ as in Section~\ref{subsec:Jacobi}. Suppose that the connected components of $X$ are numbered $1,2, \ldots, n$. Let $\g = (\g, \langle \cdot, \cdot \rangle)$ be a metrized Lie algebra over $\K$ with the universal enveloping algebra $U(\g)$. Then, the algebra homomorphism $W_{\g}\colon \A(X)_\K \rightarrow U(\g)^{\otimes n}$, called the \emph{universal $\g$ weight system} for $\A(X)$, is defined as follows. 
For $i=1,2,\ldots, n$, we suppose that the $J$ has $k_i$ $(\geq 0)$ univalent vertices on $i$th interval denoted by $v_1^i, v_2^i, \ldots, v_{k_i}^i$ with the order following the orientation of the interval. 
We also suppose that $J$ has $l$ $(\geq 0)$ trivalent vertices $v_{e^1_1 e^1_2 e^1_3}, v_{e^2_1 e^2_2 e^2_3}, \ldots, v_{e^l_1 e^l_2 e^l_3}$, where $v_{e^j_1 e^j_2 e^j_3}$ denotes the trivalent vertex incident to the three edges $e^j_1, e^j_2, e^j_3$ with an order representing the cyclic order at the vertex. 
A \emph{state} on the set $E_J$ of the edges of $J$ is a map $\sigma\colon E_J \rightarrow \{1,2,\ldots, \dim \g\}$. 
Any given state $\sigma$ on $E_J$ induces the map $V_J^{\uni} \rightarrow \{1,2,\ldots, \dim \g\}$ on the set $V_J^{\uni}$ of univalent vertices of $J$ by assigning the number $\sigma(e)$ to each univalent vertex $v$ incident to the edge $e$. By abuse of notation, we also denote by $\sigma$ the induced map. Then, we define $W_{\g}(J)\in U(\g)^{\otimes n}$ by
\[
W_{\g}(J) = \sum_{\sigma\colon E_J \rightarrow \{1,2,\ldots, \dim \g\}} \bigotimes _{i=1}^{n} \bm{e}_{\sigma(v_1^i)}\bm{e}_{\sigma(v_2^i)}\cdots  \bm{e}_{\sigma(v_{k_i}^i)} \prod_{j=1}^l \left(-c_{\sigma(e_1^j)\sigma(e_2^j)\sigma(e_3^j)}\right),
\]
where $\{\bm{e}_i\}$ is an orthonormal basis of $\g$ with respect to $\langle \cdot, \cdot \rangle$ and $c_{ijk}$ is the structure constant associated with the basis.
Extending it linearly, we obtain the desired map $W_{\g}\colon \A(X)_\K \rightarrow U(\g)^{\otimes n}$. Note that the well-definedness of the map $W_{\g}$ follows from arguments similar to those in \cite[Section~6.6 and Page~290]{Oht02} or \cite[Section~6.2]{CDM12}.
\begin{remark}
    When defining $W_{\g}$, we assign $-\mathbf{t}_{\g}$ at each trivalent vertex of a Jacobi diagram as in \cite{CDM12}, which differs from \cite[Section~6]{NSS22JT} where instead $\mathbf{t}_{\g}$ is used.
\end{remark}

\begin{example}
    It is known that there is the skein relation to the universal $\sl_2 = \sl_2(\K)$ weight system (\cite[Theorem 6]{CV97}, \cite[Theorem 6.17]{CDM12}) associated with the metrized Lie algebra $(\sl_2, B_0)$, where $B_0$ denotes the trace form on it:
\begin{equation}\label{eq:skein_sl2}
W_{\sl_2} \bigl(\ 
\begin{tikzpicture}[scale=0.4, baseline={(0,-0.07)}, densely dashed]
\draw (0,0) -- (1,0);
\draw (-0.5, 0.7) -- (0,0);
\draw (1,0) -- (1.5, 0.7);
\draw (1,0) -- (1.5, -0.7);
\draw (0,0) -- (-0.5, -0.7);
\end{tikzpicture}
\ \bigr)
= 
2 W_{\sl_2} \bigl(\ 
\begin{tikzpicture}[scale=0.4, baseline={(0,-0.07)}, densely dashed]
\draw[rounded corners=3pt] (-0.5, 0.7) -- (0,0.3) -- (1,0.3) -- (1.5, 0.7);
\draw[rounded corners=3pt] (1.5, -0.7)-- (1,-0.3) -- (0,-0.3) -- (-0.5, -0.7);
\end{tikzpicture}
\ \bigr)
-
2 W_{\sl_2} \bigl(\ 
\begin{tikzpicture}[scale=0.4, baseline={(0,-0.07)}, densely dashed]
\draw (-0.5, 0.7) -- (1.5, -0.7);
\draw (1.5, 0.7)-- (-0.5, -0.7);
\end{tikzpicture}
\ \bigr).
\end{equation}
\end{example}
We next recall the weight systems associated with the representations of a metrized Lie algebra $\g$. We keep the same notation as above. Let $\rho\colon \g \to \End(V)$ be a finite-dimensional representation of $\g$. The tensor product $\rho^{\otimes n}$ extends to an algebra homomorphism $U( \rho)^{\otimes n}\colon U(\g)^{ \otimes n} \to \End(V)^{\otimes n}$. The \emph{weight system associated with the representation} $(\g, \rho)$ is the algebra homomorphism $W_{\g}^{\rho}\colon \A(X)_\K \to \End(V)^{\otimes n}$  defined as the  composition
\[
W_{\g}^{\rho}\colon \A(X)_\K \xrightarrow{W_{\g}} U(\g)^{\otimes n} \xrightarrow{U(\rho)^{\otimes n}} \End(V)^{\otimes n}.
\]
Finally, we briefly describe the general case that $X$ is an oriented compact $1$-manifold. For a Jacobi diagram $J$ based on $X$, we first choose a basepoint on each circle in the connected components of $X$ away from the univalent vertices on it. Then, by applying the same procedure as above and composing the trace maps, we obtain the algebra homomorphism
\[
W_{\g}^{\rho}\colon \A(X)_\K \xrightarrow{W_{\g}} U(\g)^{\otimes b_0(X)} \xrightarrow{U(\rho)^{\otimes b_0(X)}} \End(V)^{\otimes b_0(X)} \xrightarrow{\tr^{\otimes b_1(X)}} \End(V)^{\otimes \chi(X)}.
\]
Here, $b_i(X)$ and $\chi(X)$ denote the $i$th Betti number and the Euler characteristic of $X$ respectively, and the map $\tr^{\otimes b_1(X)}$ is defined as the tensor product of the trace $\tr$ on $\End(V)$ corresponding to each circle in $X$ (and the identity map on $\End(V)$ corresponding to the other connected components).

\subsection{Associators}
\label{subsec:associator}
Let $Y$ be a connected oriented compact $1$-manifold.
We recall from \cite[Section~1]{LeMu96CM} three linear maps 
\[
\varepsilon_Y\colon \A(X\sqcup Y)\to \A(X),\ 
\Delta_Y\colon \A(X\sqcup Y)\to \A(X\sqcup Y\sqcup Y),\ 
S_Y\colon \A(X\sqcup Y)\to \A(X\sqcup \overline{Y}),
\]
where $\overline{Y}$ is $Y$ with opposite orientation.
First, $\varepsilon_i(J)$ is defined to be $J$ if no univalent vertices lie on $Y$, otherwise $0$.
We define $\Delta_Y(J)$ as the sum of all ways of reattaching the legs of $J$ connecting with $Y$ to either $Y$ or its copy.
Finally, define $S_Y(J) = (-1)^k J$, where $k$ is the number of univalent vertices of $J$ lying on $Y$.
When the connected components are indexed by $1,\dots,n$ and $Y$ is the $i$th component, we simply write $\varepsilon_i$, $\Delta_i$, $S_i$, respectively.

Let $R=\exp\Bigl(\frac{1}{2}\ \onechord\ \Bigr) \in \widehat{\A}(\downarrow\downarrow)$ and let $R^{ij} \in \widehat{\A}(\downarrow^{\otimes 3})_\K$ denote the element obtained by inserting $R$ along the $i$th and $j$th strands.
An element $\Phi\in \widehat{\A}(\downarrow\downarrow\downarrow)_\K$ is called an \emph{associator} if it satisfies
\begin{enumerate}[label=(\arabic*)]
    \item $(1\otimes \Phi) \cdot (\Delta_2\Phi) \cdot (\Phi\otimes 1) = (\Delta_3\Phi) \cdot (\Delta_1\Phi)$;
    \item $\Phi^{231} \cdot (\Delta_2R) \cdot \Phi = R^{13} \cdot \Phi^{213} \cdot R^{12}$;
    \item $\varepsilon_i(\Phi)=1$\ \text{for $i=1,2,3$};
    \item $\Phi^{-1} = \Phi^{321}$,
\end{enumerate}
where $\Phi^{ijk}$ denotes the element obtained by arranging the first component to the $i$th component and so on.
See \cite[Section~4]{LeMu96CM} and \cite{Dri91} for details.

Next, we recall horizontal associators, which provide interesting examples of associators and are obtained from Drinfeld assoicators defined as follows. We define algebra homomorphisms $\varepsilon\colon \K\langle\!\langle A, B \rangle\!\rangle \to \K$ and $\Delta \colon \K\langle\!\langle A, B \rangle\!\rangle \to \K\langle\!\langle A, B \rangle\!\rangle \hat{\otimes}  \K\langle\!\langle A, B \rangle\!\rangle$ by $\varepsilon(A)=\varepsilon(B)=0$ and $\Delta(A) = A \otimes 1 + 1 \otimes A, \Delta(B) = B \otimes 1 + 1 \otimes B$, respectively.
A \emph{Drinfeld associator} is a pair $(\mu, \varphi)  \in  \K \times  \K\langle\!\langle A, B \rangle\!\rangle$ satisfying
\begin{enumerate}[label=(\arabic*)]
\item $\mu \neq 0$;
\item $\Delta \varphi = \varphi \otimes \varphi$, $\varepsilon(\varphi)=1$;
\item the \emph{pentagon relation} (\ref{eq:pen}) in $\widehat{U}(\mathfrak{t}_4)_{\K}$ and the two \emph{hexagon relations} (\ref{eq:hex1}), (\ref{eq:hex2}) in $\widehat{U}(\mathfrak{t}_3)_{\K}$:
\end{enumerate}
\begin{gather}
\varphi(t_{12}, t_{23}+t_{24})\varphi(t_{13}+t_{23}, t_{34})  
= \varphi(t_{23}, t_{34})\varphi(t_{12}+t_{13}, t_{24}+t_{34})\varphi(t_{12}, t_{23}), \label{eq:pen} 
\\
\exp\left(\frac{\mu}{2}(t_{13}+t_{23})\right) 
= \varphi(t_{31}, t_{21})\exp\left(\frac{\mu}{2}t_{13}\right)\varphi(t_{13}, t_{32})^{-1}\exp\left(\frac{\mu}{2}t_{23}\right)\varphi(t_{12}, t_{23}), \label{eq:hex1} 
\\
\exp\left(\frac{\mu}{2}(t_{12}+t_{13})\right) 
= \varphi(t_{23}, t_{31})^{-1}\exp\left(\frac{\mu}{2}t_{13}\right) \varphi(t_{21}, t_{13})\exp\left(\frac{\mu}{2}t_{12}\right)\varphi(t_{12}, t_{23})^{-1}. \label{eq:hex2}
\end{gather}
As shown by Furusho~\cite{Fur10}, the pentagon relation \eqref{eq:pen} implies the two hexagon relations \eqref{eq:hex1} and \eqref{eq:hex2} for some $\mu \in \overline{\K}$ which is unique up to sign.
Let $M(\K)$ and $M_1(\K)$ be the set of Drinfeld associators $(\mu, \varphi)$ and the set of Drinfeld associators $(1, \varphi)$ with $\mu=1$, respectively. Note that $M_1(\K) \subset M(\K)$. By definition and simple calculation, one can see that, for a Drinfeld associator $(1, \varphi) \in M_1(\K)$, the element $\Phi=\varphi(t_{12}, t_{23}) \in \widehat{\mathcal{A}}^{\hor}(\downarrow \downarrow \downarrow)_\K$ satisfies the four conditions of associators in the algebra $\widehat{\A}(\downarrow\downarrow\downarrow)_\K$ of Jacobi diagrams. 
We call such an associator obtained from a Drinfeld associator a (\emph{Drinfeld}) \emph{horizontal associator}. 

We recall lower degree terms of two horizontal associators  $\Phi_\KZ$ and $\Phi_\Q^{\even}$ appearing in this paper.
Let $\Phi_\KZ \in \widehat{\mathcal{A}}^{\hor}(\downarrow \downarrow \downarrow)_\C$ denote the \emph{Knizhnik--Zamolodchikov associator}.
Then, $\log \Phi_\KZ$ is given by the image of the series
\begin{align*}
    & \frac{1}{24} [A, B] - \frac{\zeta(3)}{(2\pi i)^3} [ A+ B, [A,B]]  - \frac{1}{1440} \left([A,[A,[A,B]]]] - [B,[B,[B,A]]]]\right)\\
    &\qquad - \frac{1}{11520} \left([A,[B,[A,B]]]] - [B,[A,[B,A]]]]\right) + (\text{deg} \geq 5)
\end{align*}
under the algebra homomorphism $\K\langle\!\langle A, B\rangle\!\rangle \to \widehat{\A}(\downarrow\downarrow\downarrow)_\K$ defined by
\[
A\mapsto \chordA\ ,\quad B\mapsto \chordB\ .
\]
Let $\Phi_\Q^\even \in \widehat{\mathcal{A}}^{\hor}(\downarrow \downarrow \downarrow)$ be an even rational horizontal associator (its existence is proved by Drinfeld in \cite[Theorem~A$''$, Proposition~5.4]{Dri91}).
As shown in Bar-Natan~\cite{Bar97}, lower degree terms of $\log \Phi_\Q^\even$ are obtained from
\begin{align*}
   & \frac{1}{24} [A, B] - \frac{1}{1440} \left([A,[A,[A,B]]]] - [B,[B,[B,A]]]]\right)\\
   &\qquad - \frac{1}{11520} \left([A,[B,[A,B]]]] - [B,[A,[B,A]]]]\right) + (\text{deg} \geq 6)
\end{align*}
via the above algebra homomorphism.
See also \cite[Sections~10.2.5 and 10.4.3]{CDM12} or \cite[pp.~370 and 373]{Oht02}. It is known that $\log \Phi_\Q^\even$ is uniquely determined up to degree $7$ as shown by Bar-Natan~\cite{Bar97}, whereas uniqueness fails at degree $8$ and higher as shown by Brochier~\cite{Broc25}.
According to \cite[Remark~10.38]{CDM12}, $\log \Phi_\KZ = \log \Phi_\Q^\even$ at the degrees $0,1,2,4$.

Let $\Phi$ be an associator and suppose that $F\in \widehat{\A}(\downarrow\downarrow)_\K$
is symmetric (i.e., $F^{21}=F$) and satisfies $\varepsilon_1(F)=\varepsilon_2(F)=1$.
Then, 
\[
(1\otimes F) \cdot (\Delta_2 F) \cdot \Phi \cdot (\Delta_1 F^{-1}) \cdot (F^{-1}\otimes 1)
\]
is again an associator, said to be obtained from $\Phi$ by \emph{twisting} via $F$.
See \cite[Section~7]{LeMu96CM}, \cite[Theorem~A$'$]{Dri91}, \cite[Theorem~10.35]{CDM12}, or \cite[p.~368]{Oht02}.

\begin{example}
\label{ex:twisting}
As written in \cite[Example~10.36]{CDM12}, $\Phi_\KZ$ and $\Phi_\Q^\even$ are related by twisting up to degree $4$.
The twisting by $1 + \alpha J_3$ adds the degree $3$ term $2\alpha [A+B, [A,B]]$ up to degree $4$, where $J_3 \in \A(\downarrow \downarrow)$ is defined as
\[
J_3=
\begin{tikzpicture}[scale=0.4, baseline={(0,0.6)}, densely dashed]
\draw[->,solid] (0,4) -- (0,0);
\draw[->,solid] (2,4) -- (2,0);
\draw (0,1) -- (2,1) ;
\draw (1,1) -- (1,3) ;
\draw (0,3) -- (2,3) ;
\end{tikzpicture}\ ,
\]
and $\alpha \in \K$.
Hence, by taking $\alpha = - \zeta(3)/2(2\pi i)^3$, twisting $\Phi_\Q^\even$ by $1 + \alpha J_3$ gives rise to $\Phi_\KZ$ up to degree $4$. If we take $\alpha = \zeta(3)/(2\pi i)^3$, the corresponding twisting transforms $\Phi_\KZ(A,B)$ to $\Phi_\KZ(-A,-B)$ up to degree $4$.
Here, note that $\Phi_\KZ(A,B) \neq \Phi_\KZ(-A,-B)$  \cite[Remark in page 851]{Dri91}.
By taking any $\alpha \in \Q$, one can obtain another rational (horizontal) associator from $\Phi_\Q^\even$ via twisting up to degree $4$.
\end{example}

\subsection{The Kontsevich invariant}
\label{subsec:Kontsevich}
In \cite{Kon93}, Kontsevich introduced an integral for oriented links $L$ in $S^3$ via the KZ equation.
This gives an invariant of $L$, which takes values in the vector space $\widehat{\A}\left(\bigsqcup^{b_0(L)} S^1\right)_\C$.
Using associators, Le and Murakami~\cite{LeMu96CM} defined the Kontsevich invariant of (framed, oriented) $q$-tangles in a combinatorial way.
See also \cite[Section~10.3.5]{CDM12} and \cite[Section~6.4]{Oht02}.

We briefly review the definition of the Kontsevich invariant of $q$-tangles following \cite[Theorem~2]{LeMu96CM}.
A \emph{$q$-tangle} $T$ is a framed oriented tangle equipped with two non-associative words $w_t(T)$, $w_b(T)$ in letters $+$, $-$, where the letters in $w_t(T)$ (resp.~$w_b(T)$) correspond to the endpoints $\partial_{t}T$ at the top (resp.~$\partial_{b}T$ at the bottom).
For $q$-tangles $T$ and $T'$, let $T\otimes T'$ denote the $q$-tangle obtained by horizontal juxtaposition of $T$ and $T'$.
When $w_b(T)=w_t(T')$, the composition $T\circ T'$ is defined by stacking $T$ on $T'$.
Let us define the \emph{Kontsevich invariant} $Z_\Phi$ associated with an associator $\Phi$.
We first define $Z_\Phi(T)$ for elementary $q$-tangles $T$:
\begin{gather*}
\begin{tikzpicture}[scale=0.4, baseline={(0,0.3)}]
    \draw[->] (2,2) -- (0,0) node[at start, anchor=south]{\small $+$} node[at end, anchor=north]{\small $+$};
    \draw (0,2) -- (0.8,1.2) node[at start, anchor=south]{\small $+$};
    \draw[->] (1.2,0.8) -- (2,0) node[at end, anchor=north]{\small $+$};
    \node at (3.5,1) {$\mapsto$};
    \node at (7.5,1) {$\exp\bigl(\frac{1}{2}\ \onechordcross\ \bigr)$};
\end{tikzpicture},
\quad
\begin{tikzpicture}[scale=0.4, baseline={(0,0.3)}]
    \draw[->] (0,2) -- (2,0) node[at start, anchor=south]{\small $+$} node[at end, anchor=north]{\small $+$};
    \draw (2,2) -- (1.2,1.2) node[at start, anchor=south]{\small $+$};
    \draw[->] (0.8,0.8) -- (0,0) node[at end, anchor=north]{\small $+$};
    \node at (3.5,1) {$\mapsto$};
    \node at (7.5,1) {$\exp\bigl(\frac{-1}{2}\ \onechordcross\ \bigr)$};
\end{tikzpicture},
\\
\begin{tikzpicture}[scale=0.4, baseline={(0,0.3)}]
    \draw[->] (0,2) -- (0,0) node[at start, anchor=south]{\small $+\phantom{)}$} node[at end, anchor=north]{\small $(+$};
    \draw[->] (1,2) -- (1,0) node[at start, anchor=south]{\small $(+$} node[at end, anchor=north]{\small $+)$};
    \draw[->] (2,2) -- (2,0) node[at start, anchor=south]{\small $+)$} node[at end, anchor=north]{\small $\phantom{(}+$};
\end{tikzpicture}
\mapsto \Phi,
\quad
\begin{tikzpicture}[scale=0.4, baseline={(0,0.3)}]
    \draw[->] (0,2) -- (0,0) node[at start, anchor=south]{\small $(+$} node[at end, anchor=north]{\small $+\phantom{)}$};
    \draw[->] (1,2) -- (1,0) node[at start, anchor=south]{\small $+)$} node[at end, anchor=north]{\small $(+$};
    \draw[->] (2,2) -- (2,0) node[at start, anchor=south]{\small $\phantom{(}+$} node[at end, anchor=north]{\small $+)$};
\end{tikzpicture}
\mapsto \Phi^{-1},
\\
\begin{tikzpicture}[scale=0.4, baseline={(0,0.0)}]
    \draw[->] (1,0) arc[start angle=0, end angle=180, radius=1] node[at start, anchor=north]{\small $-$} node[at end, anchor=north]{\small $+$};
    \node at (3,0) {$\mapsto$}; 
    \node[draw, circle, inner sep=1] at (5,1) {$\nu$};
    \draw (6,0) arc[start angle=0, end angle=65, radius=1];
    \draw[<-] (4,0) arc[start angle=180, end angle=115, radius=1];
\end{tikzpicture}\ ,
\quad
\begin{tikzpicture}[scale=0.4, baseline={(0,0.0)}]
    \draw[->] (-1,0) arc[start angle=180, end angle=360, radius=1] node[at start, anchor=south]{\small $+$} node[at end, anchor=south]{\small $-$};
    \node at (3,0) {$\mapsto$}; 
    \draw[->] (4,0) arc[start angle=180, end angle=360, radius=1];
\end{tikzpicture}\ .
\end{gather*}
Here, $\nu\in \widehat{\A}(\downarrow)$ is defined by
\[
\nu=
\Biggl(\ 
\begin{tikzpicture}[baseline={(0,0.3)}]
\draw[->, rounded corners=5.7pt] (0, 1.2) -- (0, 0.5) -- (0, 0.1) -- (0.3, -0.3) -- (0.6, 0.1) -- (0.6, 0.7) -- ( 0.9, 1.1)-- (1.2, 0.7) -- ( 1.2, -0.4);
\node[draw,  rectangle, fill=white, minimum width=1.6cm,minimum height=0.6cm] at (0.6,0.4) {$S_2 \Phi$};
\end{tikzpicture}
\ \Biggr)^{-1},
\]
which is known to be independent of the choice of $\Phi$.
Since every $q$-tangle decomposes into elementary $q$-tangles by $\otimes$ and $\circ$, we can define $Z_\Phi(T)$ so that it satisfies $Z_\Phi(T\otimes T') = Z_\Phi(T)\otimes Z_\Phi(T')$ and $Z_\Phi(T\circ T') = Z_\Phi(T)\cdot Z_\Phi(T')$.
Note that the normalization in \cite{LeMu96CM} is the same as in \cite{CDM12} but different from \cite{Oht02}.

\begin{example}\label{ex:3-pb_konts_inv}
    Let $T$ be the $q$-braid of three strands given as follows:
    \[
    T = \begin{tikzpicture}[scale=0.4, baseline={(0,0.3)}]
    \draw[->] (0,2) -- (0,0);
    \draw (1,2) -- (1.35,1.65);
    \draw[->,rounded corners=3pt] (1.65,1.35) -- (2,1)-- (1,0) ;
    \draw[rounded corners=3pt] (2,2) -- (1,1) -- (1.35,0.65);
    \draw[->] (1.65, 0.35) -- (2,0);
    \foreach \x in {2.75, -0.75}{
    \node at (-0.1,\x) {$(+$};
    \node at (1.1,\x) {$+)$};
    \node at (2,\x) {$+$};
    }
\end{tikzpicture}.
    \]
    Then, we observe directly that $Z_{\Phi_\KZ}(T)$ have terms whose coefficients are multiples of $\zeta(3)$ as 
    \begin{align*}
     \log Z_{\Phi_\KZ}(T) &= \log \left(\Phi_\KZ(t_{12}, t_{23})^{-1} \cdot \exp(t_{23}) \cdot \Phi_\KZ(t_{12}, t_{23})\right) \\
     &= t_{23} - \frac{1}{24 }[[t_{12},t_{23}],t_{23}] \\
    &\quad + \frac{\zeta(3)}{(2 \pi i)^3} [[ t_{12} + t_{23}, [t_{12}, t_{23}]], t_{23}] + (\text{deg} \geq 5),
    \end{align*}
    and so we conclude that $Z_{\Phi_\KZ}(T) \neq Z_{\Phi_\Q^\even}(T)$.
\end{example}

We here recall from \cite[Theorem~7]{LeMu96CM} the behavior of $Z_\Phi$ under twisting $\Phi$, which plays a key role in the proof of Theorem~\ref{thm:deg_8}.
Let $\Phi$, $\Phi'$ be associators and suppose that $\Phi$ is obtained by twisting $\Phi'$ via $F\in \widehat{\A}(\downarrow\downarrow)_\K$.
For a $2$-component string link $T$, we have 
\[
F\cdot Z_\Phi(T)\cdot F^{-1} = Z_{\Phi'}(T).
\]
See also \cite[Theorem~10.37]{CDM12}.
In \cite[Theorem~3.14]{Fur20}, a similar formula is given for proalgebraic string link as in  Section~\ref{subsec:GT_act_string}.

\section{The action of the Grothendieck--Teichm\"uller group on proalgebraic string links}

In this section, we start by recalling the notion of proalgebraic string links and the action of the Grothendieck--Teichm\"{u}ller group on them, and then give the proof of Theorem~\ref{thm:GT_1_2-string_link} under the assumption of Theorem~\ref{thm:deg_8}.

\subsection{Proalgebraic string links}\label{subsec:string_link}
Here, we set up the notation related to proalgebraic string links. For precise definitions, see \cite{Fur20}.

Let $n$ be a positive integer and let $\epsilon=(\epsilon_1, \epsilon_2, \ldots, \epsilon_n)$ be a sequence of letters $+$ and $-$. For such a sequence $\epsilon$, we denote by $\SL_\epsilon $ the monoid of string links of type $\epsilon$, that is, oriented string links $T$ with trivial framings such that the endpoints $\partial_t(T)$ and $\partial_b(T)$ correspond to the sequence $\epsilon$. In the present article, a string link $T$ of type $\epsilon$ is regarded as a $q$-tangle with the fixed non-associative word $w_t(T) = w_b(T) = ((\cdots ((( \epsilon_1 \epsilon_2) \epsilon_3 )\epsilon_4)\cdots) \epsilon_n)$ so that the Kontsevich invariant for $q$-string links is compatible with the morphism $\rho_{\epsilon}(p)$ for string links defined in \cite[Proposition~3.25(3)]{Fur20} as explained in Remark~\ref{rem:3.2}. 
The group of invertible elements of $\SL_\epsilon$ is equal to the group of pure braids of type $\epsilon$ and denoted by $\PB_\epsilon$. 
Since most of the results we consider follow from the case that $\epsilon=(++\cdots +)$ with  small modifications, we mainly consider $\SL_n= \SL_{(++\cdots +)}$ for simplicity. 
The pure braid group $\PB_n \subset \SL_n $ is generated by 
\[
x_{ij} = 
\begin{tikzpicture}[baseline={(0,0.5)}]
\foreach \x in {0, 0.5, 1.5, 4.5, 5.5, 6}
\draw[->,rounded corners=4pt] (\x,1.4) -- (\x, 0);
\draw (2, 1.4) .. controls (2, 0.8) and (2.6, 0.9) .. (3.9, 0.8);
\draw[->] (4.1, 0.8) .. controls (5.4, 0.7) and (2, 0.6) .. (2, 0);
\draw (2.5, 1.4) -- (2.5, 0.95);
\draw (2.5, 0.8) -- (2.5, 0.5);
\draw[->] (2.5, 0.3) -- (2.5, 0);
\draw (3.5, 1.4) -- (3.5, 0.9);
\draw (3.5, 0.75) -- (3.5, 0.65);
\draw[->] (3.5, 0.5) -- (3.5, 0);
\draw (4, 1.4) -- (4, 0.75);
\draw[->] (4, 0.55) -- (4, 0);
\foreach \x / \y in {0/1, 0.5/, 1.5/, 2/i, 2.5/, 3.5/, 4/j, 4.5/, 5.5/, 6/n}
\node at (\x, -0.25) {$\y$};
\foreach \x in {1, 3, 5}
\node at (\x, 0.7
) {$\cdots$};
\end{tikzpicture}\ ,
\]
where $1 \leq i < j \leq n$. For the relations they satisfy, see \cite{KT08}.
It is well known that there is a semidirect product decomposition $\PB_3 \cong F_2(x_{12}, x_{23}) \rtimes \langle x_{13} \rangle$, where $F_2(x_{12}, x_{23}) $ denotes the free group of rank $2$ generated by $x_{12}$ and $x_{23}$, and $\langle x_{13} \rangle \cong \Z$ is the cyclic group generated by $x_{13}$. For $1 \leq a \leq a + \alpha < b \leq b + \beta \leq n$, we set
\begin{align*}
x_{a\cdots a + \alpha, b \cdots b +\beta} &\coloneqq (x_{a, b} x_{a, b+1} \cdots x_{a, b+\beta})\cdot (x_{a+1, b} x_{a+1, b+1} \cdots x_{a+1, b+\beta})\\
&\qquad \cdots (x_{a+\alpha, b} x_{a+\alpha, b+1} \cdots x_{a+\alpha, b+\beta}) \in \PB_n,
\end{align*}
which can also be obtained from $x_{a,b} \in \PB_{n-\alpha - \beta}$ by doubling the $a$th and $b$th strings $\alpha$ and $\beta$ times, respectively (cf.~\cite[Section~1.1]{Fur20}). 

Let $\K[\SL_\epsilon]$ denote the $\K$-algebra generated by string links of type $\epsilon$. The topological $\K$-algebra of \emph{proalgebraic string links} is defined by its completion $\widehat{\K[\SL_\epsilon]}$ by a singular filtration \`a la Vassiliev. 
More precisely, a singular string link is considered as the element of $\K[\SL_\epsilon]$ by resolving its double points as follows:
\[
\begin{tikzpicture}
\draw[->] (0,1) -- (1, 0);
\draw[->] (1,1) -- (0, 0);
\node at (0.5, 0.5) {\textbullet}; 
\node at (1.5, 0.5) {$=$};
\draw (2,1) -- (2+0.4, 1-0.4);
\draw[->]  (2+0.6, 1-0.6) -- (3, 0);
\draw[->] (3,1) -- (2, 0);
\node at (3.5, 0.5) {$-$};
\draw (5,1) -- (5-0.4, 1-0.4);
\draw[->] (5-0.6,1-0.6) -- (4, 0);
\draw[->] (4,1) -- (5, 0);
\end{tikzpicture}\ .
\]
For each $k \geq 0$, define $J_k \subset \K[\SL_\epsilon]$ to be the subspace generated by singular string links of type $\epsilon$ with $k$-double points. 
We then have the filtration $\K[\SL_\epsilon] = J_0 \supset J_1 \supset J_2 \supset \cdots \supset J_k \supset \cdots$, called the {\it Vassiliev filtration} on $\K[\SL_\epsilon]$. 
We now define $\widehat{\K[\SL_\epsilon]}$ as the inverse limit $\varprojlim_k \K[\SL_\epsilon]/J_k$. 
It is known that the Vassiliev filtration $\{J_k\}_k$ on $\K[\PB_{\epsilon}]$ coincides with the filtration by the powers $I^k$ of the augmentation ideal $I \coloneqq \Ker(\K[\PB_{\epsilon}] \to \K)$ (cf.~\cite[Section~12.2]{CDM12}).

\subsection{Grothendieck--Teichm\"uller group action on proalgebraic string links}\label{subsec:GT_act_string}
Here, we recall from \cite{Fur20} the action of the proalgebraic Grothendieck--Teichm\"uller group on the proalgebraic string links inspired by the work of Kassel--Turaev~\cite{KaTu98}. 
In this paper, we review only the facts necessary for our purposes.
Hence, we refer the interested reader to \cite{Dri91}, \cite{KaTu98}, and \cite{Fur20} for further details.

Let $F_2(\K) \subset \widehat{\K[F_2]}$ be the Malcev completion of the free group $F_2 = F_2(x,y)$ generated by $x$ and $y$. Here, we consider that $\widehat{\K[F_2]}$ is endowed with the Hopf algebra structure induced by that of $\K[F_2]$ with group-like elements $g\in F_2$.
Let $\GT(\K)$ denote the \emph{proalgebraic Grothendieck--Teichm\"uller group}, i.e., we set
$$
\GT(\K)=\{ (\lambda, f) \in \K^{\times} \times F_2(\K)\  |\  (\lambda, f)\  \text{satisfies the following (\ref{eq:gt1})--(\ref{eq:gt3})}\},
$$
\begin{align}
&f(x, y)=f(y, x)^{-1},\label{eq:gt1}\\
& f(z, x)z^m f(y, z)y^m f(x, y)x^m = 1 \quad \text{if $xyz=1$ and $m=\frac{\lambda -1 }{2}$}, \label{eq:gt2}\\
& f(x_{12}, x_{23}x_{24})f(x_{13}x_{23}, x_{34})=f(x_{23}, x_{34})f(x_{12}x_{13}, x_{24}x_{34})f(x_{12}, x_{23}) \label{eq:gt3},
\end{align}
where \eqref{eq:gt3} is an equality in $\widehat{\K[\PB_4]}$.
Here, $a^m$ is defined by $a^m = \exp(m \log a)$ for $a \in \{x, y, z\}$ and note that it is well-defined in $\widehat{\K[F_2]}$. The group structure of $\GT(\K)$ is given as
\[
(\lambda_1, f_1) \bullet (\lambda_2, f_2) = (\lambda_1 \lambda_2, f_2 \cdot (f_1(x^{\lambda_2}, f_2^{-1} y^{\lambda_2} f_2))).
\]
Note that, as shown by Furusho~\cite{Fur10}, the pentagon relation \eqref{eq:gt3} implies the hexagon relations \eqref{eq:gt1} and \eqref{eq:gt2} for some $\lambda \in \overline{\K}$ which is unique up to sign.
Let $\GT_1(\K)$ denote its unipotent part, i.e., the subgroup of $\GT(\K)$ consisting of elements with $\lambda=1$.

Next, recall the graded variant of $\GT(\K)$, called the \emph{graded Grothendieck--Teichm\"uller group} and denoted by $\GRT(\K)$.
As a set, $\GRT(\K)$ consists of pairs $(c, g)\in \K^{\times} \times \K\langle\!\langle A, B \rangle\!\rangle$ such that 
\begin{enumerate}[label=(\arabic*)]
\item $\Delta(g)=g\otimes g$, $\varepsilon(g)=1$;
\item $g$ satisfies the pentagon relation (\ref{eq:pen}), and the two hexagon relations (\ref{eq:hex1}) and (\ref{eq:hex2}) with $\mu=0$.
\end{enumerate}
For any $(c_1, g_1), (c_2, g_2) \in \GRT(\K)$, the product $(c_1, g_1) \bullet  (c_2, g_2)$ is defined as
\[
(c_1, g_1) \bullet  (c_2, g_2) = \left(c_1 c_2, g_2 \cdot g_1\left(\frac{X}{c_2}, g_2^{-1} \frac{Y}{c_2} g_2 \right)\right).
\]
Note that, as a consequence of the result by Furusho~\cite{Fur10}, the two hexagon relations \eqref{eq:hex1} and \eqref{eq:hex2} are redundant since these relations follow from the pentagon relation \eqref{eq:pen}. 
Then, the surjective homomorphism $\GRT(\K) \to \K^{\times}$ sending $(c,g)$ to $c$ induces the short exact sequence
\[
0 \rightarrow \GRT_1(\K) \rightarrow \GRT(\K) \rightarrow \K^{\times} \rightarrow 0,
\]
and, by choosing a section $\K^{\times} \to \GRT(\K)$ given as $c \mapsto (c,1)$, one obtains the semidirect product decomposition $\GRT(\K) \cong \K^{\times} \ltimes \GRT_1(\K)$. Here, $\GRT_1(\K)$ denotes the subgroup of $\GRT(\K)$ consisting of elements with $c=1$. 
As explained in \cite[Section~2]{Fur06}, the action of $\K^{\times}$ given by $(1, g(A, B))\mapsto (1, g(A/c, B/c))$ endows (the ring of regular functions on) $\GRT_1(\K)$ a grading structure.

As shown in \cite{Dri91}, the set $M(\K)$ of associators forms a $(\GRT(\K), \GT(\K))$-bitorsor, i.e., $\GRT(\K)$ acts on $M(\K)$ freely and transitively from the left and also $\GT(\K)$ acts on $M(\K)$ freely and transitively from the right, and the actions are commutative each other\footnote{Following \cite{Fur20}, we reverse the order of the product given in \cite{Dri91}.}. 
These actions are given as follows:
For $(c, g) \in \GRT(\K)$ and $(\mu, \varphi) \in M(\K)$, 
\[
(c, g) \bullet (\mu, \varphi) = \left(\frac{\mu}{c}, g\cdot \varphi\left(\frac{A}{c}, g^{-1} \frac{B}{c} g \right) \right) \in M(\K)
\]
and, for $(\mu, \varphi) \in M(\K)$ and $(\lambda, f) \in \GT(\K)$, 
\[
(\mu, \varphi)\bullet (\lambda, f)=(\lambda \mu, \varphi\cdot f( e^{\mu A}, \varphi^{-1} e^{\mu B} \varphi) )\in M(\K).
\]

An extension of the $\GT(\K)$-action on the proalgebraic (pure) braid groups by Drinfeld (\cite{Dri91}) to the proalgebraic tangle is indicated by Kassel--Turaev (\cite{KaTu98}) and rigorously constructed by Furusho (\cite{Fur20}). Here, we recall this extension in the case of proalgebraic string links. We have a $\GT(\K)$-action
\[
\rho_\epsilon \colon \GT(\K) \to \mathrm{Aut}(\widehat{\K[\mathcal{SL}_\epsilon]})
\]
and a $\GRT(\K)$-action
\[
\rho_\epsilon \colon \GRT(\K) \rightarrow \mathrm{Aut}(\widehat{\A}(\downarrow^{\epsilon})_{\K}),
\]
both of which restrict to the actions on $\widehat{\K[\PB_n]}$ and $\widehat{U}(\mathfrak{t}_n)_{\K}$ given in \cite{Dri91}, respectively. Moreover, there is a morphism of bitorsors 
\[
\rho_{\epsilon}\colon M(\K) \to \Isom(\widehat{\K[\mathcal{SL}_\epsilon]}, \widehat{\A}(\downarrow^{\epsilon})_{\K}).
\]
The above actions are given by combination of the operator $S$, the A-part, B-part, and C-part of the ABC-construction defined in \cite[Section~2]{Fur20}.
See Figures~\ref{fig:gt_action} and \ref{fig:grt_action}.

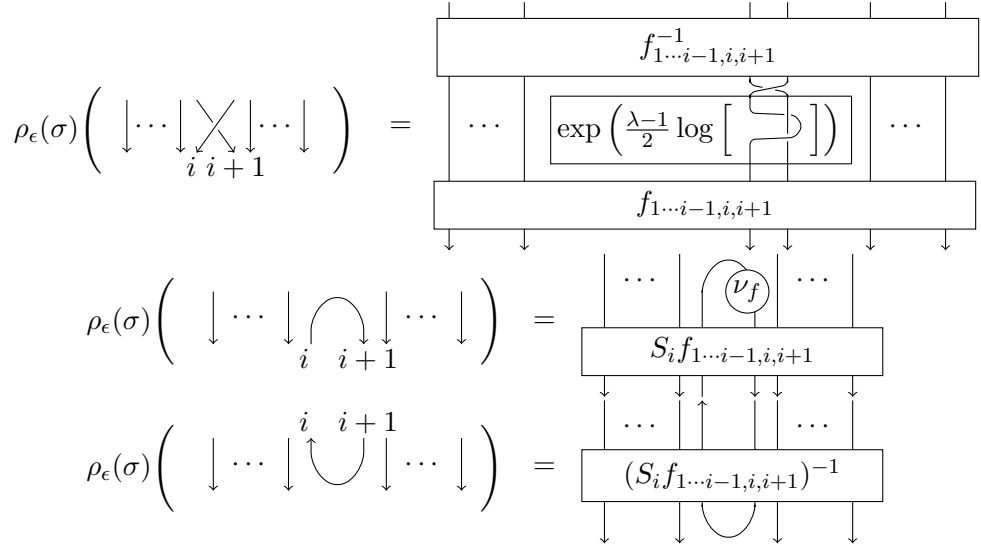
\begin{figure}[h]
\begin{tikzpicture}
\begin{scope}[xshift=0cm]
\begin{scope}[xshift=-1cm]
\node at (-4.7, 0.3) {$\rho_\epsilon(\sigma)\Biggl($};
\node at (-0.9, 0.3) {$\Biggr)$};
\begin{scope}[xscale=0.714, xshift=-1.0cm]
\draw[->,rounded corners=4pt] (-2.3, 0.7) -- (-3, 0);  
\draw[->,rounded corners=4pt] (-2, 0.7) -- (-2, 0);  
\node at (-1.5, 0.35) {$\cdots$};
\draw[->,rounded corners=4pt] (-3.3, 0.7) -- (-3.3, 0);  
\node at (-3.8, 0.35) {$\cdots$};
\draw[->, rounded corners=4pt] (-4.3, 0.7) -- (-4.3, 0);  
\draw[->, rounded corners=4pt] (-1, 0.7) -- (-1, 0);  
\node at (-2.45, -0.2) {$i \  i+1$};
\draw[rounded corners=4pt] (-3, 0.7) -- (-2.7, 0.4);  
\draw[->, rounded corners=4pt] (-2.6, 0.3) -- (-2.3, 0);  
\node at (0.8, 0.35) {$=$};
\end{scope}

\end{scope}
\begin{scope}[xshift=3.5cm]
\draw[<-, rounded corners=2pt] (0, -1.3) -- (0, 0.175) -- (0.6, 0.2) -- (0.7, 0.35) -- (0.6, 0.5) -- (0, 0.525) -- (0, 0.7) -- (0, 0.8) -- (0.5, 0.9) -- (0.5, 1) -- (0.5, 2);
\draw[color=white, line width=2.5pt] (0.5, 0.7) -- (0.5, 0.25);
\draw[rounded corners=2pt] (0, 2) -- (0, 1) -- (0, 0.9) -- (0.166, 0.9-0.033);
\draw[rounded corners=2pt] (0.5-0.166, 0.8+0.033) -- (0.5, 0.8) -- (0.5, 0.7) -- (0.5, 0.25) ;
\draw[->, rounded corners=2pt]  (0.5, 0.15) -- (0.5, -1.3);
\node at (-1.4, 0.3) {$\exp \Big(\frac{\lambda -1}{2} \log\Big[
$};
\node at (1,0.3) {$\Big]\Big)$};
 \node[draw,  rectangle, minimum width=4cm,minimum height=0.9cm] at (-0.65, 0.3) {};

\begin{scope}[xshift=0.3cm]
\draw[->, rounded corners=4pt] (1.3, 2) -- (1.3, -1.3);  
\node at (1.8, 0.35) {$\cdots$};
\draw[->, rounded corners=4pt] (2.3, 2) -- (2.3, -1.3);  
\end{scope}
\begin{scope}[xshift=-1.2cm]
\draw[->, rounded corners=4pt] (-1.8, 2) -- (-1.8, -1.3);  
\node at (-2.3, 0.35) {$\cdots$};
\draw[->, rounded corners=4pt] (-2.8, 2) -- (-2.8, -1.3);  
\end{scope}

 \node[draw,  rectangle, fill=white, minimum width=7.2cm,minimum height=0.6cm] at (-0.6,-0.7) {$f_{1\cdots i-1, i, i+1}$};
  \node[draw,  rectangle, fill=white, minimum width=7.2cm,minimum height=0.6cm] at (-0.56,1.4) {$f_{1\cdots i-1, i, i+1}^{-1}$};
 \end{scope}
\end{scope}
\end{tikzpicture}
\begin{tikzpicture}
\node at (-1.8, 0.6) {$\rho_\epsilon(\sigma)\Biggl($};
\begin{scope}[yshift=0.3cm]
\draw[->, rounded corners=6pt]  (0.6,0) -- (0.6,0.4) -- (0.95, 0.7) -- (1.3, 0.4) --  (1.3, 0); 
\draw[->, rounded corners=6pt]  (1.6,0.7) --(1.6, 0); 
\node at (1.1, -0.2) {$i\quad i+1$};
\node at (2.1, 0.35) {$\cdots$};
\node at (-0.2, 0.35) {$\cdots$};
\draw[->, rounded corners=6pt]  (2.6,0.7) --(2.6, 0); 
\draw[->, rounded corners=6pt]  (0.3,0.7) --(0.3, 0); 
\draw[->, rounded corners=6pt]  (-0.7,0.7) --(-0.7, 0); 
\end{scope}
\node at (3, 0.6) {$\Biggr)$};
\node at (3.7, 0.6) {$=$};
\begin{scope}[xshift=5.2cm, yshift=0.8cm]
\draw[->, rounded corners=6pt] (0.6, -1.2) -- (0.6,0) -- (0.6, 0.4) -- (0.95, 0.7) -- (1.3, 0.4) --  (1.3, 0) -- (1.3, -1.2);
\node[draw,  circle, inner sep=1, fill=white] at (1.2,0.2) {$\nu_f$};
\draw[->, rounded corners=6pt]  (1.6,0.7) --(1.6, -1.2); 
\node at (2.1, 0.35) {$\cdots$};
\node at (-0.2, 0.35) {$\cdots$};
\draw[->, rounded corners=6pt]  (2.6,0.7) --(2.6, -1.2); 
\draw[->, rounded corners=6pt]  (0.3,0.7) --(0.3, -1.2); 
\draw[->, rounded corners=6pt]  (-0.7,0.7) --(-0.7, -1.2); 
 \node[draw,  rectangle, fill=white, minimum width=4cm,minimum height=0.6cm] at (1,-0.6) {$S_if_{1\cdots i-1, i, i+1}$};
\end{scope}
\end{tikzpicture}

\begin{tikzpicture}
\node at (-1.8, 0.6) {$\rho_\epsilon(\sigma)\Biggl($};
\begin{scope}[yshift=0.3cm]
\draw[->, rounded corners=6pt]  (1.3, 0.7) -- (1.3, 0.3) -- (0.95, 0) -- (0.6, 0.3) -- (0.6, 0.7);
\draw[->, rounded corners=6pt]  (1.6,0.7) --(1.6, 0); 
\node at (2.1, 0.35) {$\cdots$};
\node at (-0.2, 0.35) {$\cdots$};
\node at (1.1, 0.9) {$i\quad i+1$};
\draw[->, rounded corners=6pt]  (2.6,0.7) --(2.6, 0); 
\draw[->, rounded corners=6pt]  (0.3,0.7) --(0.3, 0); 
\draw[->, rounded corners=6pt]  (-0.7,0.7) --(-0.7, 0); 
\end{scope}
\node at (3, 0.6) {$\Biggr)$};
\node at (3.7, 0.6) {$=$};
\begin{scope}[xshift=5.2cm, yshift=0.8cm]
\begin{scope}[yshift=-1.2cm]
\draw[->, rounded corners=6pt] (1.3, 1.9) -- (1.3, 0.7) -- (1.3, 0.3) -- (0.95, 0) -- (0.6, 0.3) -- (0.6, 0.7) -- (0.6, 1.9);
\end{scope}
\draw[->, rounded corners=6pt]  (1.6,0.7) --(1.6, -1.2); 
\node at (2.1, 0.35) {$\cdots$};
\node at (-0.2, 0.35) {$\cdots$};
\draw[->, rounded corners=6pt]  (2.6,0.7) --(2.6, -1.2); 
\draw[->, rounded corners=6pt]  (0.3,0.7) --(0.3, -1.2); 
\draw[->, rounded corners=6pt]  (-0.7,0.7) --(-0.7, -1.2); 
 \node[draw,  rectangle, fill=white, minimum width=4cm,minimum height=0.6cm] at (1,-0.3) {$(S_if_{1\cdots i-1, i, i+1})^{-1}$};
\end{scope}
\end{tikzpicture}
\caption{The $\GT(\K)$-action on elementary tangles. Here, $\sigma=(\lambda, f) \in \GT(\K)$ and we set $f_{1\cdots i-1, i, i+1} \coloneqq f(x_{1 \cdots i-1, i}, x_{i i+1}) = f(x_{1i} x_{2i} \cdots x_{i-1i}, x_{ii+1})$ when $i \geq 2$ and $f_{1\cdots i-1, i, i+1}=1$ when $i=1$. Note that $\exp(\frac{\lambda-1}{2} \log (x_{i, i+1}))$ is well-defined in $\widehat{\K[\mathit{\PB}_n]}$. 
The element $\nu_f \in \widehat{\K[\mathcal{SL}_1]}$ is defined in the same way as $\nu$ in Section~\ref{subsec:Kontsevich}, by replacing $\Phi$ with $f$.}
\label{fig:gt_action}
\end{figure}

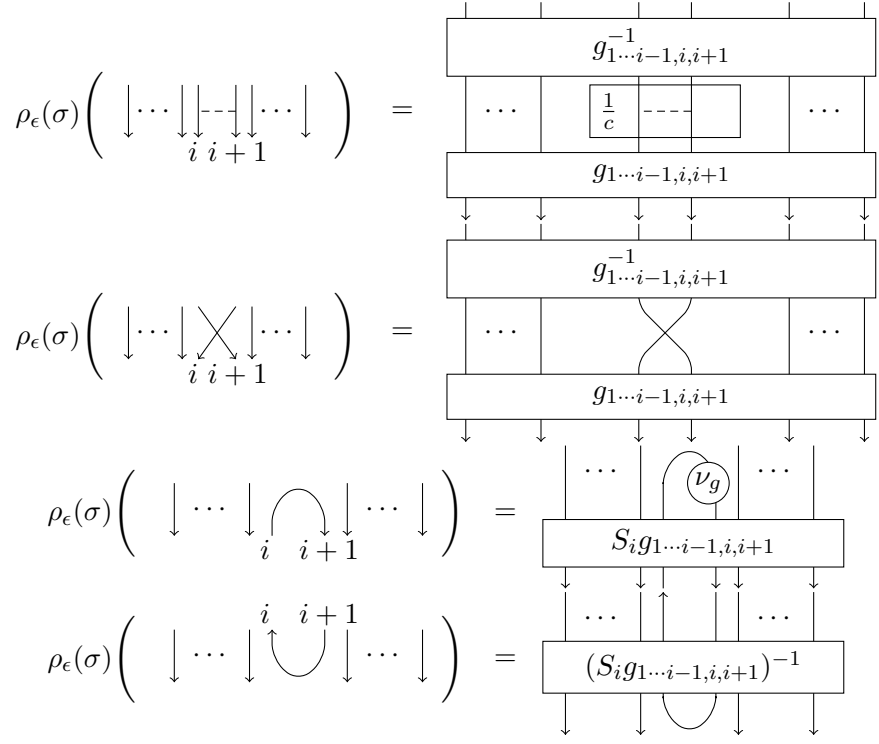
\begin{figure}[h]
\begin{tikzpicture}
\begin{scope}[xshift=1.7cm]
\node at (-4.7, 0.3) {$\rho_\epsilon(\sigma)\Biggl($};
\node at (-0.9, 0.3) {$\Biggr)$};
\begin{scope}[xscale=0.714, xshift=-1.0cm]
\draw[->, rounded corners=4pt] (-2.3, 0.7) -- (-2.3, 0);  
\draw[->, rounded corners=4pt] (-2, 0.7) -- (-2, 0);  
\node at (-1.5, 0.35) {$\cdots$};
\draw[->, rounded corners=4pt] (-3.3, 0.7) -- (-3.3, 0);  
\node at (-3.8, 0.35) {$\cdots$};
\draw[->, rounded corners=4pt] (-4.3, 0.7) -- (-4.3, 0);  
\draw[->, rounded corners=4pt] (-1, 0.7) -- (-1, 0);  
\node at (-2.45, -0.2) {$i \  i+1$};
\draw[->, rounded corners=4pt] (-3, 0.7) -- (-3, 0);
\draw[densely dashed, rounded corners=4pt] (-2.3, 0.35) -- (-3, 0.35);
\node at (0.8, 0.35) {$=$};
\end{scope}

\begin{scope}[xshift=6cm]
\draw[->, rounded corners=4pt] (-2.3, 1.8) -- (-2.3, -1.1);  
\draw[->, rounded corners=4pt] (-2+1, 1.8) -- (-2+1, -1.1);  
\node at (-1.5+1, 0.35) {$\cdots$};
\draw[->, rounded corners=4pt] (-3.3-1, 1.8) -- (-3.3-1, -1.1);  
\node at (-3.8-1, 0.35) {$\cdots$};
\draw[->, rounded corners=4pt] (-4.3-1, 1.8) -- (-4.3-1, -1.1);  
\draw[->, rounded corners=4pt] (-1+1, 1.8) -- (-1+1, -1.1);  
\node at (-3.4, 0.35) {$\frac{1}{c}$};
 \node[draw,  rectangle, minimum width=2cm,minimum height=0.7cm] at (-2.65, 0.35) {};
\draw[->, rounded corners=4pt] (-3, 1.8) -- (-3, -1.1);
\draw[densely dashed, rounded corners=4pt] (-2.3, 0.35) -- (-3, 0.35);
 \node[draw,  rectangle, fill=white, minimum width=5.7cm,minimum height=0.6cm] at (-2.7,-0.5) {$g_{1\cdots i-1, i, i+1}$};
  \node[draw,  rectangle, fill=white, minimum width=5.7cm,minimum height=0.6cm] at (-2.7,1.2) {$g_{1\cdots i-1, i, i+1}^{-1}$};
\end{scope}
\end{scope}
\end{tikzpicture}

\begin{tikzpicture}
\begin{scope}[xshift=1.7cm]
\node at (-4.7, 0.3) {$\rho_\epsilon(\sigma)\Biggl($};
\node at (-0.9, 0.3) {$\Biggr)$};
\begin{scope}[xscale=0.714, xshift=-1.0cm]
\draw[->, rounded corners=4pt] (-2.3, 0.7) -- (-3, 0);  
\draw[->, rounded corners=4pt] (-3, 0.7) -- (-2.3, 0);  
\node at (-2.45, -0.2) {$i \  i+1$};
\draw[->, rounded corners=4pt] (-2, 0.7) -- (-2, 0);  
\node at (-1.5, 0.35) {$\cdots$};
\draw[->, rounded corners=4pt] (-3.3, 0.7) -- (-3.3, 0);  
\node at (-3.8, 0.35) {$\cdots$};
\draw[->, rounded corners=4pt] (-4.3, 0.7) -- (-4.3, 0);  
\draw[->, rounded corners=4pt] (-1, 0.7) -- (-1, 0);  
\node at (0.8, 0.35) {$=$};
\end{scope}
\begin{scope}[xshift=6cm]
\draw[->, rounded corners=4pt] (-2.3, 1.8) -- (-2.3, 0.7) -- (-3, 0) -- (-3, -1.1);  
\draw[->, rounded corners=4pt] (-3, 1.8) --  (-3, 0.7) -- (-2.3, 0) -- (-2.3, -1.1);  
\draw[->, rounded corners=4pt] (-2+1, 1.8) -- (-2+1, -1.1);  
\node at (-1.5+1, 0.35) {$\cdots$};
\draw[->, rounded corners=4pt] (-3.3-1, 1.8) -- (-3.3-1, -1.1);  
\node at (-3.8-1, 0.35) {$\cdots$};
\draw[->, rounded corners=4pt] (-4.3-1, 1.8) -- (-4.3-1, -1.1);  
\draw[->, rounded corners=4pt] (-1+1, 1.8) -- (-1+1, -1.1);  
 \node[draw,  rectangle, fill=white, minimum width=5.7cm,minimum height=0.6cm] at (-2.7,-0.5) {$g_{1\cdots i-1, i, i+1}$};
  \node[draw,  rectangle, fill=white, minimum width=5.7cm,minimum height=0.6cm] at (-2.7,1.2) {$g_{1\cdots i-1, i, i+1}^{-1}$};
\end{scope}
\end{scope}
\end{tikzpicture}

\begin{tikzpicture}
\node at (-1.8, 0.6) {$\rho_\epsilon(\sigma)\Biggl($};
\begin{scope}[yshift=0.3cm]
\draw[->, rounded corners=6pt]  (0.6,0) -- (0.6,0.4) -- (0.95, 0.7) -- (1.3, 0.4) --  (1.3, 0); 
\draw[->, rounded corners=6pt]  (1.6,0.7) --(1.6, 0); 
\node at (1.1, -0.2) {$i\quad i+1$};
\node at (2.1, 0.35) {$\cdots$};
\node at (-0.2, 0.35) {$\cdots$};
\draw[->, rounded corners=6pt]  (2.6,0.7) --(2.6, 0); 
\draw[->, rounded corners=6pt]  (0.3,0.7) --(0.3, 0); 
\draw[->, rounded corners=6pt]  (-0.7,0.7) --(-0.7, 0); 
\end{scope}
\node at (3, 0.6) {$\Biggr)$};
\node at (3.7, 0.6) {$=$};
\begin{scope}[xshift=5.2cm, yshift=0.8cm]
\draw[->, rounded corners=6pt] (0.6, -1.2) -- (0.6,0) -- (0.6, 0.4) -- (0.95, 0.7) -- (1.3, 0.4) --  (1.3, 0) -- (1.3, -1.2);
\node[draw,  circle, inner sep=1, fill=white] at (1.2,0.2) {$\nu_g$};
\draw[->, rounded corners=6pt]  (1.6,0.7) --(1.6, -1.2); 
\node at (2.1, 0.35) {$\cdots$};
\node at (-0.2, 0.35) {$\cdots$};
\draw[->, rounded corners=6pt]  (2.6,0.7) --(2.6, -1.2); 
\draw[->, rounded corners=6pt]  (0.3,0.7) --(0.3, -1.2); 
\draw[->, rounded corners=6pt]  (-0.7,0.7) --(-0.7, -1.2); 
 \node[draw,  rectangle, fill=white, minimum width=4cm,minimum height=0.6cm] at (1,-0.6) {$S_ig_{1\cdots i-1, i, i+1}$};
\end{scope}
\end{tikzpicture}

\begin{tikzpicture}
\node at (-1.8, 0.6) {$\rho_\epsilon(\sigma)\Biggl($};
\begin{scope}[yshift=0.3cm]
\draw[->, rounded corners=6pt]  (1.3, 0.7) -- (1.3, 0.3) -- (0.95, 0) -- (0.6, 0.3) -- (0.6, 0.7);
\draw[->, rounded corners=6pt]  (1.6,0.7) --(1.6, 0); 
\node at (2.1, 0.35) {$\cdots$};
\node at (-0.2, 0.35) {$\cdots$};
\node at (1.1, 0.9) {$i\quad i+1$};
\draw[->, rounded corners=6pt]  (2.6,0.7) --(2.6, 0); 
\draw[->, rounded corners=6pt]  (0.3,0.7) --(0.3, 0); 
\draw[->, rounded corners=6pt]  (-0.7,0.7) --(-0.7, 0); 
\end{scope}
\node at (3, 0.6) {$\Biggr)$};
\node at (3.7, 0.6) {$=$};
\begin{scope}[xshift=5.2cm, yshift=0.8cm]
\begin{scope}[yshift=-1.2cm]
\draw[->, rounded corners=6pt] (1.3, 1.9) -- (1.3, 0.7) -- (1.3, 0.3) -- (0.95, 0) -- (0.6, 0.3) -- (0.6, 0.7) -- (0.6, 1.9);
\end{scope}
\draw[->, rounded corners=6pt]  (1.6,0.7) --(1.6, -1.2); 
\node at (2.1, 0.35) {$\cdots$};
\node at (-0.2, 0.35) {$\cdots$};
\draw[->, rounded corners=6pt]  (2.6,0.7) --(2.6, -1.2); 
\draw[->, rounded corners=6pt]  (0.3,0.7) --(0.3, -1.2); 
\draw[->, rounded corners=6pt]  (-0.7,0.7) --(-0.7, -1.2); 
 \node[draw,  rectangle, fill=white, minimum width=4cm,minimum height=0.6cm] at (1,-0.3) {$(S_ig_{1\cdots i-1, i, i+1})^{-1}$};
\end{scope}
\end{tikzpicture}
\caption{The $\GRT(\K)$-action on fundamental infinitesimal tangles. Here, $\sigma=(c,g) \in \GRT(\K)$ and we set $g_{1\cdots i-1, i, i+1} = \Delta_1^{i-2} g$ when $i \geq 2$ and $g_{1\cdots i-1, i, i+1}=1$ when $i=1$. 
The element $\nu_g \in \widehat{\mathcal{A}}(\downarrow)_{\K}$ is defined in the same way as $\nu$ in Section~\ref{subsec:Kontsevich}, by replacing $\Phi$ with $g$.}
\label{fig:grt_action}
\end{figure}

\begin{remark}\label{rem:3.1}
We note that, in \cite{KaTu98} and \cite{Fur20}, the above $\GT(\K)$-action is given, more generally, for proalgebraic tangles. 
\end{remark}
\begin{remark}\label{rem:3.2}
For $p=(1, \varphi) \in M_1(\K)$, the isomorphism $\rho_{\epsilon}(p)$ coincides with the Kontsevich invariant $Z_{\Phi}$ for $\Phi=\varphi(t_{12},t_{23})$ as briefly explained in the following. 
We have an isomorpshim $\widehat{\K[\SL_\epsilon]} \cong \varprojlim_n \K[\SL^\pre_\epsilon]/\T^\pre_n$ \cite[Proposition~3.7(2)]{Fur20}.
Let $\K[\I\SL^\pre_\epsilon]$ denote the $\K$-vector space generated by ``infinitesimal pre-string links'' of type $\epsilon$ subject to the relations \cite[(IT1)--(IT6)]{Fur20} and its completion $\widehat{\K[\I\SL_\epsilon]}$ is the $\K$-algebra of ``infinitesimal string links'' of type $\epsilon$.
For $p = (\mu, \varphi) \in M(\K)$, an isomorphism $\rho_{\epsilon}(p)\colon \widehat{\K[\SL_{\epsilon}]} \to \widehat{\K[\I\SL_{\epsilon}]}$ is defined in \cite[Proposition~3.25(3)]{Fur20}.
There is a natural isomorphism $\widehat{\K[\I\SL_{\epsilon}]}\to \widehat{\A}(\downarrow^\epsilon)_\K$ defined in \cite[Proposition~3.28]{Fur20}.
When $p = (1, \varphi)$ with $\Phi= \varphi(t_{12},t_{23})$, their composition coincides with the induced isomorphism $Z_\Phi\colon \widehat{\K[\SL_{\epsilon}]}\to \widehat{\A}(\downarrow^\epsilon)_\K$.
See \cite[Notation~2.7(2), Proposition~2.19, and Remark~3.29]{Fur20}.
\end{remark}

In \cite{Fur20}, Furusho studied the case of string links in more detail and gave the following theorem.

\begin{theorem}[{\cite[Theorem~A]{Fur20}}]\label{thm:fur20_thm}
Let  $\GT_1(\K)$ denote the unipotent part of $\GT(\K)$. Then, the following holds.
\begin{enumerate}[label=\textup{(\arabic*)}]
\item The $\GT_1(\K)$-action on $\widehat{\K[\mathcal{SL}_\epsilon]}$ is given by an inner conjugation.
    \item The $\GT_1(\K)$-action on  proalgebraic knots is trivial.
\end{enumerate}
\end{theorem}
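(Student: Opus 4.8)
The plan is to exhibit an explicit element of $\GT_1(\K)$ that acts non-trivially, by translating the $\GT_1(\K)$-action on proalgebraic $2$-component string links into a comparison of Kontsevich invariants for two horizontal associators via the bitorsor morphism of Section~\ref{subsec:GT_act_string}, and then invoking Theorem~\ref{thm:deg_8}. First I would fix $\epsilon=(+,+)$ and recall that $\rho_\epsilon\colon M(\K)\to\Isom(\widehat{\K[\SL_2]},\widehat{\A}(\downarrow\downarrow)_\K)$ is a morphism of bitorsors, intertwining the right $\GT(\K)$-action on $M(\K)$ with the action $\rho_\epsilon\colon\GT(\K)\to\Aut(\widehat{\K[\SL_2]})$; concretely, for $p\in M(\K)$ and $\sigma\in\GT(\K)$,
\[
\rho_\epsilon(p\bullet\sigma)=\rho_\epsilon(p)\circ\rho_\epsilon(\sigma).
\]
By Remark~\ref{rem:3.2}, for $p=(1,\varphi)\in M_1(\K)$ the isomorphism $\rho_\epsilon(p)$ is exactly the Kontsevich invariant $Z_\Phi$ with $\Phi=\varphi(t_{12},t_{23})$.

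The next step uses that $\GT_1(\K)$ acts freely and transitively on the fiber $M_1(\K)$. From the explicit right-action formula $(\mu,\varphi)\bullet(\lambda,f)=(\lambda\mu,\cdots)$, an element $\sigma=(1,f)\in\GT_1(\K)$ preserves $\mu=1$; conversely, if $p,p'\in M_1(\K)$ and $\sigma\in\GT(\K)$ is the unique element with $p'=p\bullet\sigma$, then matching the $\mu$-values forces $\lambda=1$, so $\sigma\in\GT_1(\K)$. Hence the free transitivity of $\GT(\K)$ on $M(\K)$ restricts to free transitivity of $\GT_1(\K)$ on $M_1(\K)$, which precisely identifies $\GT_1(\K)$ with the change of horizontal associator.

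Now I would apply Theorem~\ref{thm:deg_8} with $n=2$: starting from a horizontal associator $\Phi=\varphi(t_{12},t_{23})$, it produces a $2$-component string link $T_2$ and a horizontal associator $\Phi'=\varphi'(t_{12},t_{23})$ with $Z_\Phi(T_2)_8\neq Z_{\Phi'}(T_2)_8$, so in particular $Z_\Phi(T_2)\neq Z_{\Phi'}(T_2)$. Let $\sigma\in\GT_1(\K)$ be the unique element with $(1,\varphi')=(1,\varphi)\bullet\sigma$. Combining the intertwining identity with Remark~\ref{rem:3.2} gives $Z_{\Phi'}=Z_\Phi\circ\rho_\epsilon(\sigma)$, whence
\[
\rho_\epsilon(\sigma)=Z_\Phi^{-1}\circ Z_{\Phi'}
\]
as an automorphism of $\widehat{\K[\SL_2]}$. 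Evaluating at $T_2$ yields $\rho_\epsilon(\sigma)(T_2)=Z_\Phi^{-1}(Z_{\Phi'}(T_2))\neq T_2$, since $Z_\Phi$ is an isomorphism and $Z_{\Phi'}(T_2)\neq Z_\Phi(T_2)$. Thus $\rho_\epsilon(\sigma)\neq\id$, proving that the $\GT_1(\K)$-action on proalgebraic $2$-component string links is non-trivial.

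I expect the only subtle points to be keeping the order of composition straight in the bitorsor morphism (so that the right $\GT(\K)$-action lands on the source $\widehat{\K[\SL_2]}$ rather than on the target $\widehat{\A}(\downarrow\downarrow)_\K$) and confirming that the element $\sigma$ relating the two horizontal associators genuinely lies in the unipotent part $\GT_1(\K)$ rather than merely in $\GT(\K)$. Both are handled by the explicit right-action formula together with the identification of Drinfeld horizontal associators with the fiber $M_1(\K)$ at $\mu=1$, so the argument reduces essentially to the transport-of-structure statement in Remark~\ref{rem:3.2} combined with the separation result of Theorem~\ref{thm:deg_8}.
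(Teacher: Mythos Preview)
Your proposal does not address the stated theorem at all. Theorem~\ref{thm:fur20_thm} is Furusho's result asserting that (1) the $\GT_1(\K)$-action on $\widehat{\K[\SL_\epsilon]}$ is given by an inner conjugation, and (2) the $\GT_1(\K)$-action on proalgebraic knots is \emph{trivial}. In the paper this theorem is simply quoted from \cite{Fur20}; the paper does not prove it, it only recalls after the statement how the inner-conjugation formula \eqref{eq:conj_act} arises via the isomorphism $r_p\colon \GT_1(\K)\to\GRT_1(\K)$ and the twistor $F(r_p(\sigma))$.

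What you have written is instead a proof of Theorem~\ref{thm:GT_1_2-string_link}, the statement that the $\GT_1(\K)$-action on proalgebraic $2$-component string links is \emph{non-trivial}. That is a different theorem with a logically opposite conclusion (non-triviality versus the triviality in part (2) of Theorem~\ref{thm:fur20_thm}). Your argument for Theorem~\ref{thm:GT_1_2-string_link} is correct and is essentially the same as the paper's own proof in Section~\ref{subsec:GT-action}: both reduce the problem, via the bitorsor morphism $\rho_\epsilon$ and Remark~\ref{rem:3.2}, to exhibiting horizontal associators $\Phi,\Phi'$ and a $2$-string link $T$ with $Z_\Phi(T)\neq Z_{\Phi'}(T)$, which is supplied by Theorem~\ref{thm:deg_8}. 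The paper phrases this through the $\GRT_1(\K)$-side using $r_p$ and Lemma~\ref{lem:twistor}, while you stay on the $\GT_1(\K)$-side using the right action on $M_1(\K)$; these are equivalent via the bitorsor structure. But none of this bears on Theorem~\ref{thm:fur20_thm} as stated.
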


As a consequence of Theorem~\ref{thm:fur20_thm}(2), the $\GT(\K)$-action on proalgebraic $n$-component string links is not faithful for $n=1$, which is in contrast to the faithfulness for $n\geq 3$. A way to see an inner conjugate action of $\GT_1(\K)$ in Theorem~\ref{thm:fur20_thm}(1) is as follows \cite[Theorem~4.14]{Fur20}: By the $(\GRT(\K), \GT(\K))$-bitorsor structure of $M(\K)$, there is an isomorphism $r_p \colon \GT_1(\K) \overset{\sim}{\to} \GRT_1(\K)$ for $p=(1,\varphi) \in M_1(\K)$. For $\sigma=(1,f) \in \GT_1(\K)$, we have the corresponding element $r_p(\sigma) \in GRT_1(\K)$ via $p \bullet \sigma = r_p(\sigma) \bullet p $ for $ p=(1,\varphi) \in M_1(\K)$. Then, for $\Gamma \in \widehat{\K[\mathcal{SL}_{\epsilon}}]$, we have
\begin{align*}
Z_{\Phi}(\rho_{\epsilon}(\sigma)(\Gamma)) &= Z_{\Phi\bullet \sigma}(\Gamma) = Z_{r_p(\sigma)\bullet \Phi}(\Gamma)=\rho_{\epsilon} (r_p(\sigma))(Z_{\Phi}(\Gamma))\\
&= 
F(r_p(\sigma), \epsilon)\cdot Z_{\Phi}(\Gamma)\cdot F(r_p(\sigma), \epsilon)^{-1}.
\end{align*}
Here, $F(r_p(\sigma), \epsilon)$ is an element of $\widehat{\A}(\downarrow^{\epsilon})_{\K}$ defined by using a twistor $F(r_p(\sigma))\in \widehat{\A}(\downarrow \downarrow)_\K$ that is obtained as $\GRT_1(\K)$-analogue of twistor lemma~\cite[Lemma~4.2]{Fur20}. 
Applying $Z_{\Phi}^{-1}$ to both sides, we obtain the  conjugation action as desired
\begin{equation}\label{eq:conj_act}
\rho_\epsilon(\sigma)(\Gamma)=\varpi(\sigma, \epsilon)\cdot \Gamma\cdot \varpi(\sigma, \epsilon)^{-1},
\end{equation}
where we set 
$\varpi(\sigma, \epsilon)\coloneqq Z_{\Phi}^{-1}(F(r_p(\sigma),\epsilon)) \in \widehat{\K[\mathcal{SL}_{\epsilon}}]^{\times}$.

Since it is useful to clarify the relationship between a twisting of associators recalled in Section~\ref{subsec:Kontsevich} and a twistor of an element in $\GRT_1(\K)$, we prepare the following lemma.
\begin{lemma}\label{lem:twistor}
    For any $\sigma =(1,g) \in \GRT_1(\K)$ and $p=(1, \varphi) \in M_1(\K)$, we set $\Phi'= \varphi'(t_{12},t_{23})$ and $\Phi=\varphi(t_{12}, t_{23})$, where $p' = (1, \varphi') = \sigma \bullet (1, \varphi) \in M_1(\K)$. Then, the twisting of $\Phi$ via a twistor $F(\sigma) \in \widehat{\A}(\downarrow \downarrow)_{\K}$ results in the associator $\Phi'$.
\end{lemma}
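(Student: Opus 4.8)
The plan is to unwind the two descriptions of the map $\Phi \mapsto \Phi'$ and check they agree. On the one hand, $\Phi' = \varphi'(t_{12},t_{23})$ where $(1,\varphi') = \sigma \bullet (1,\varphi)$; by the explicit $\GRT(\K)$-action formula recalled in Section~\ref{subsec:GT_act_string}, with $c=1$ this reads $\varphi' = g \cdot \varphi(A, g^{-1} B g)$, so that $\Phi' = g(t_{12},t_{23}) \cdot \varphi(t_{12}, g(t_{12},t_{23})^{-1} t_{23} g(t_{12},t_{23}))$ in $\widehat{\A}^{\hor}(\downarrow\downarrow\downarrow)_\K$. On the other hand, the twisting of $\Phi$ via a symmetric twistor $F \in \widehat{\A}(\downarrow\downarrow)_\K$ with $\varepsilon_1(F)=\varepsilon_2(F)=1$ is, by the formula in Section~\ref{subsec:associator},
\[
(1\otimes F)\cdot(\Delta_2 F)\cdot \Phi\cdot(\Delta_1 F^{-1})\cdot(F^{-1}\otimes 1).
\]
So the lemma amounts to identifying the correct twistor $F(\sigma)$ attached to $\sigma=(1,g)\in\GRT_1(\K)$ and verifying that plugging it into the twisting formula reproduces the right-hand side above.

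First I would recall the precise definition of the twistor $F(\sigma)$ from the $\GRT_1(\K)$-analogue of the twistor lemma \cite[Lemma~4.2]{Fur20}: it is built so that conjugation by $F(\sigma)$ on Jacobi diagrams on two strands realizes the change of coproduct implicit in the map $B \mapsto g^{-1} B g$. Concretely one expects $F(\sigma) \in \widehat{\A}(\downarrow\downarrow)_\K$ to be essentially $g$ evaluated on the two available ``chord'' variables on two strands, normalized to be symmetric and counit-$1$; the key property is $\Delta_2 F$ (resp. $\Delta_1 F$) expressing the doubling of the second (resp. first) strand. Then I would substitute this into the twisting formula and use the hexagon/pentagon bookkeeping together with the fact that $g(t_{12},t_{23})$ is grouplike for the relevant coproduct to collapse the five-fold product down to $g(t_{12},t_{23})\cdot \varphi(t_{12}, g^{-1}t_{23}g)$. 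The main computational input is that $\Delta_1$ and $\Delta_2$ applied to a horizontal element are given by the standard ``strand-doubling'' substitutions $t_{12}\mapsto t_{12}+t_{13}$, etc., which lets one match the output of the twisting formula with the explicit $\GRT$-action formula term by term.

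The step I expect to be the main obstacle is pinning down the exact normalization of $F(\sigma)$ and making the substitution/grouplikeness manipulation rigorous in the completed, noncommutative setting: one must check the symmetry condition $F^{21}=F$, the counit conditions, and that the telescoping of $(1\otimes F)(\Delta_2 F)$ against $(\Delta_1 F^{-1})(F^{-1}\otimes 1)$ around $\Phi$ is exactly what the coproduct twist $B \mapsto g^{-1}Bg$ produces — rather than that conjugate up to a further inner factor. It is precisely here that the hexagon relations for $\varphi$ (equivalently the hexagon relations defining $\GRT_1$) are used, since they control how $\exp(\tfrac12 t_{ij})$-type factors interact with the twistor, and without them the two associators would differ by a coboundary. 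Once the normalization is fixed, the remaining verification is a direct, if slightly lengthy, comparison of the two formulas, and I would present it as such rather than expanding every bracket.
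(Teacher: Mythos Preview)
Your setup is right --- identifying $\Phi' = g(t_{12},t_{23})\cdot\varphi(t_{12},\, g^{-1}t_{23}g)$ and aiming to match this with the twisting formula is exactly the task --- but the mechanism you propose for the computation is off, and the ingredients you flag as crucial are not the ones actually used.

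First, your guess that $F(\sigma)$ is ``essentially $g$ evaluated on the two chord variables'' is incorrect. The twistor $F(\sigma)$ from \cite[Lemma~4.2]{Fur20} is characterized by the factorization
\[
g \;=\; F(\sigma,\epsilon_3)^{321}\cdot F(\sigma,\epsilon_3)^{-1},
\]
where $F(\sigma,\epsilon_3) = (F(\sigma)\otimes 1)\cdot\Delta_1 F(\sigma)$ and $F(\sigma,\epsilon_3)^{321} = (1\otimes F(\sigma))\cdot\Delta_2 F(\sigma)$. This identity is the whole point of the twistor lemma and is what drives the proof; you should quote it, not try to reconstruct $F(\sigma)$ from scratch.

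Second, the hexagon relations play no role here, and there are no $\exp(\tfrac12 t_{ij})$ factors to manage. The actual computational input is a pair of commutation relations from \cite[Proposition~4.7]{Fur20}: $t_{12}$ commutes with $F(\sigma,\epsilon_3)$ and $t_{23}$ commutes with $F(\sigma,\epsilon_3)^{321}$. These give $g^{-1}t_{23}g = F(\sigma,\epsilon_3)\,t_{23}\,F(\sigma,\epsilon_3)^{-1}$ and $t_{12} = F(\sigma,\epsilon_3)\,t_{12}\,F(\sigma,\epsilon_3)^{-1}$, so that
\[
\Phi' \;=\; g\cdot\varphi(t_{12},g^{-1}t_{23}g) \;=\; F(\sigma,\epsilon_3)^{321}\,F(\sigma,\epsilon_3)^{-1}\cdot F(\sigma,\epsilon_3)\,\Phi\,F(\sigma,\epsilon_3)^{-1} \;=\; F(\sigma,\epsilon_3)^{321}\cdot\Phi\cdot F(\sigma,\epsilon_3)^{-1},
\]
which is exactly the twisting formula. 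The proof is thus a three-line substitution once you cite the two Furusho results; the ``grouplikeness'' and ``telescoping'' heuristics you describe are not needed and would not, on their own, produce the factorization of $g$ that makes the argument work.
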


\begin{proof}
    It is implicit in \cite{Fur20}. We set $\epsilon_3 = +{+}+$. By definition of a twistor $F(\sigma)$ given in \cite[Lemma~4.2]{Fur20}, we have $g=F(\sigma,\epsilon_3)^{321} \cdot F(\sigma,\epsilon_3)^{-1}$, where we put
   \begin{align*}
   F(\sigma, \epsilon_3) &= (F(\sigma) \otimes 1)\cdot \Delta_1(F(\sigma)), \\
   F(\sigma, \epsilon_3)^{321} &= (1 \otimes F(\sigma)) \cdot \Delta_2(F(\sigma)).
   \end{align*}
   As shown in \cite[Proposition~4.7]{Fur20}, we have $g^{-1} t_{23} g= F(\sigma, \epsilon_3)\cdot t_{23}  \cdot F(\sigma, \epsilon_3)^{-1}$ and $t_{12}=F(\sigma, \epsilon_3) \cdot t_{12} \cdot F(\sigma, \epsilon_3)^{-1} \in \A(\downarrow\downarrow\downarrow)_\K$ since $F(\sigma,\epsilon_3)^{321} \cdot t_{23} = t_{23} \cdot F(\sigma,\epsilon_3)^{321}$ and $t_{12} \cdot F(\sigma,\epsilon_3) = F(\sigma,\epsilon_3) \cdot t_{12}$ hold. Therefore, by definition of $\GRT_1(\K)$-action on $M_1(\K)$, we have
   \begin{align*}
   \Phi' &= g\cdot \Phi(t_{12}, g^{-1}t_{23}g)\\
   &= (F(\sigma,\epsilon_3)^{321} \cdot F(\sigma,\epsilon_3)^{-1}) \cdot (F(\sigma, \epsilon_3) \cdot \Phi(t_{12}, t_{23}) \cdot F(\sigma, \epsilon_3)^{-1})\\
   &=F(\sigma, \epsilon_3)^{321}\cdot \Phi(t_{12}, t_{23}) \cdot F(\sigma, \epsilon_3)^{-1}
   \end{align*}
   and so we conclude that the twisting of $\Phi$ via $F(\sigma)$ yields $\Phi'$ as desired.
\end{proof}

\subsection{The action of the Grothendieck--Teichm\"uller group on $2$-component proalgebraic string links}
\label{subsec:GT-action}

In this subsection, assuming Theorem~\ref{thm:deg_8}, we give the proof of Theorem~\ref{thm:GT_1_2-string_link}.
\begin{proof}[Proof of Theorem~\ref{thm:GT_1_2-string_link}]
We first give the proof for the sequence $\epsilon = {++}$. 
By \eqref{eq:conj_act}, it suffices to show that 
\[
F(r_p(\sigma))\cdot  Z_{\Phi}(\Gamma)\cdot  F(r_p(\sigma))^{-1} \neq Z_{\Phi}(\Gamma)
\]
for some $\sigma \in \GT_1(\K)$, $\Gamma \in \widehat{\K[\mathcal{SL}_{\epsilon}}]$, $p=(1,\varphi)\in M_1(\K)$, and $\Phi=\varphi(t_{12},t_{23})$. Here, $F(r_p(\sigma)) \in \widehat{\A}(\downarrow \downarrow)_\K$ is a twistor of $r_p(\sigma)\in \GRT_1(\K)$. By the construction of $F(r_p(\sigma))$ and Lemma~\ref{lem:twistor}, it is equivalent to show
\begin{equation}\label{eq:ineq_z_phi}
  Z_{\Phi'}(\Gamma) \neq Z_{\Phi}(\Gamma),
\end{equation}
where $\Phi'=\varphi'(t_{12}, t_{23})$ is given by $p'=(1, \varphi')= r_p(\sigma) \bullet p = p \bullet \sigma$ for some $\sigma \in \GT_1(\K)$, $\Gamma \in \widehat{\K[\mathcal{SL}_{\epsilon}}]$, and $p=(1,\varphi)\in M_1(\K)$. By  Theorem~\ref{thm:deg_8}, we can take a pair of horizontal associators $\Phi, \Phi'$ and a $2$-component string link $T \in \SL_2$ which satisfy \eqref{eq:ineq_z_phi}. 
Therefore, by choosing $\sigma \in \GT_1(\K)$ which maps $\Phi$ to $\Phi'$ and $\Gamma = \iota(T)$, where $\iota\colon \SL_2 \to \widehat{\K[\mathcal{SL}_2]}$ is the induced map by the canonical inclusion $\iota\colon \SL_2 \to \K[\mathcal{SL}_2]$, we conclude that $\sigma \in \GT_1(\K)$ acts non-trivially on $\Gamma = \iota(T) \in \widehat{\K[\mathcal{SL}_2]}$.
For other sequences $\epsilon$, the proofs are almost identical and so we omit the detail. This completes the proof.
\end{proof}

\section{The algebra $\A(\downarrow\downarrow)$ and the proof of Theorem~\ref{thm:deg_8}}
\label{sec:main_theorem}

In this section, we precisely study the algebra $\A(\downarrow\downarrow)$ of Jacobi diagrams based on two intervals and prove Theorem~\ref{thm:deg_8}.

\subsection{The algebra structure of $\A(\downarrow\downarrow)$}
\label{subsec:2_arrows}

The proof of the next lemma is based on the idea kindly explained to us by Katsumi Ishikawa.

\newcommand{\symmetric}[1]{
\begin{tikzpicture}[scale=0.4, baseline={(0,0.3)}, densely dashed]
    \draw[solid,->] (0,2) -- (0,0);
    \draw[solid,->] (3,2) -- (3,0);
    \draw (0,1) -- (1,1);
    \draw (3,1) -- (2,1);
    \fill[fill=gray!50] (1.5,1) circle [radius=0.7];
    \node at (1.5,1) {$#1$};
\end{tikzpicture}
}

\begin{lemma}
\label{lem:box}
The sum $\symmetric{J}\ +\ \symmetric{\rotatebox{180}{$J$}}$ is contained in the center of $\A(\downarrow\downarrow)$, where the second term is obtained from the first term by rotating $J$ $180$ degrees.
In particular, if these two are the same, then $\symmetric{J}$ is in the center. 
\end{lemma}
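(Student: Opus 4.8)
The plan is to show that the symmetric element
$
B(J):=\ \symmetric{J}\ +\ \symmetric{\rotatebox{180}{$J$}}
$
commutes with every generator of $\A(\downarrow\downarrow)$, and then invoke a generation statement for the algebra. More precisely, it suffices to prove that $B(J)\cdot D = D\cdot B(J)$ for $D$ ranging over a set of algebra generators of $\A(\downarrow\downarrow)$: namely the single-chord diagram $\onechord$ (connecting the two strands) together with diagrams supported on a single strand (which, by the STU relation, can be taken to be the ``caterpillar'' or ``wheel-type'' pieces attached to one interval). Since $B(J)$ is built from a box $J$ all of whose legs come out on one side toward a short horizontal connector, the only way a generator $D$ stacked above or below can fail to commute with it is through legs that physically interact, and the point of symmetrizing is that the obstruction from stacking on top cancels the obstruction from stacking on the bottom.

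The key steps, in order, would be: \textbf{(1)} Record the algebra structure of $\A(\downarrow\downarrow)$ and identify a convenient generating set; here one uses that every Jacobi diagram on $\downarrow\downarrow$ can be reduced, via STU, to a product of ``elementary'' pieces each of which either lives on one strand or is a single cross-strand chord, so it is enough to check centrality against those. \textbf{(2)} For a one-strand generator $D$ (all legs on, say, the left strand), analyze the commutator $B(J)\,D - D\,B(J)$ using STU to slide the legs of $D$ past the legs of $J$; the difference of ``$D$ above'' and ``$D$ below'' is a sum of diagrams in which one leg of $D$ has been pushed through and reattached to a trivalent vertex of $J$ (an STU ``resolution''). \textbf{(3)} Show that the analogous computation with $\rotatebox{180}{$J$}$ in place of $J$ produces exactly the negatives of those terms: rotating $J$ by $180^\circ$ swaps ``top'' and ``bottom'' and simultaneously swaps the two strands, and one checks that under this symmetry the STU-resolution terms coming from the $J$-computation are matched, with a sign, by those from the $\rotatebox{180}{$J$}$-computation, so the two commutators cancel in the sum. \textbf{(4)} Handle the remaining generator, the cross-strand chord $\onechord$, the same way: its commutator with $B(J)$ is again a sum of STU-resolutions against $J$ and against $\rotatebox{180}{$J$}$ which cancel pairwise. \textbf{(5)} Conclude that $B(J)$ commutes with all generators, hence lies in the center; the ``in particular'' clause is immediate since if the two summands coincide then $\symmetric{J}=\tfrac12 B(J)$ is also central (we are over $\Q$).

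The main obstacle I expect is Step (3): setting up a bookkeeping of the STU-resolution terms that is clean enough to see the cancellation, rather than a brute-force diagram chase. One has to be careful that the $180^\circ$ rotation is an \emph{anti}-automorphism-like operation (it reverses the stacking order), so the sign coming from ``top vs.\ bottom'' interacts with the sign/orientation data at the trivalent vertices of $J$; getting all these signs to line up is where the argument could go wrong if done carelessly. A convenient way to control this is to phrase everything in terms of the action of a leg-sliding operator (a ``one-leg STU move'') and note that $B(J)$ is, by construction, symmetric under the involution that conjugates this operator — i.e., the symmetrization is precisely engineered so that $[\,\cdot\,,B(J)]$ lands in the $(-1)$-eigenspace of an involution while simultaneously being forced into the $(+1)$-eigenspace, hence vanishes. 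Once this structural observation is in place, the actual verification against each generator is short.
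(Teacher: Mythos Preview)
Your Step~(1) is where the argument fails: the set you propose does not generate $\A(\downarrow\downarrow)$ as an algebra. Applying STU expresses a Jacobi diagram as a \emph{linear combination} of chord diagrams, not as a product of simpler pieces; and a chord diagram on two strands whose cross-strand chords are interleaved (two chords with left endpoints in one order and right endpoints in the opposite order) is not a vertical stacking of single chords and single-strand diagrams. In fact the subalgebra generated by your proposed pieces is commutative---single-strand diagrams on opposite strands commute trivially, those on the same strand commute because $\A(\downarrow)$ is commutative, and $\onechord$ commutes with any single-strand diagram---whereas the paper later shows $J_3 J_{2n+1}\neq J_{2n+1}J_3$ for $n\geq 2$, so that subalgebra is proper. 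Verifying that $B(J)$ commutes with a commuting family gives no information about centrality in the full, non-commutative algebra. The leg-sliding heuristic in your Steps~(2)--(4) would have to be run against an \emph{arbitrary} diagram $D$, and the proposal does not supply that computation; the ``structural observation'' you allude to at the end is essentially a restatement of the lemma, not a proof of it.

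The paper's argument avoids generators entirely via a ``box'' trick. A box on the two strands with one outgoing dashed edge stands for the sum of the two diagrams obtained by attaching that edge to either strand; by IHX/STU, such a box commutes with any Jacobi diagram. Attaching two boxes to the two external legs of $J$ therefore produces a central element, and expanding the boxes yields four terms: two have both legs on a single strand (hence of the form $x\otimes 1$ or $1\otimes x$, obviously central), and the remaining two are precisely $\symmetric{J}+\symmetric{\rotatebox{180}{$J$}}$.
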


\begin{proof}
In this proof, we use a ``box'' to express a sum of diagrams:
\[
\begin{tikzpicture}[scale=0.4, baseline={(0,0.5)}, densely dashed]
    \draw[solid] (0,3) -- (0,2);
    \draw[solid] (0,1) -- (0,0);
    \draw[solid] (2,3) -- (2,2);
    \draw[solid] (2,1) -- (2,0);
    \draw[solid] (-1, 1) -- (3, 1) -- (3, 2) -- (-1, 2) -- cycle;
    \draw (3,1.5) -- (4,1.5);
    \draw[dotted] (4,1.5) -- (5,1.5);
\end{tikzpicture}
\ =\ 
\begin{tikzpicture}[scale=0.4, baseline={(0,0.5)}, densely dashed]
    \draw[solid] (0,3) -- (0,0);
    \draw[solid] (2,3) -- (2,0);
    \draw (0,1.5) -- (4,1.5);
    \draw[dotted] (4,1.5) -- (5,1.5);
\end{tikzpicture}
\ +\ 
\begin{tikzpicture}[scale=0.4, baseline={(0,0.5)}, densely dashed]
    \draw[solid] (0,3) -- (0,0);
    \draw[solid] (2,3) -- (2,0);
    \draw (2,1.5) -- (4,1.5);
    \draw[dotted] (4,1.5) -- (5,1.5);
\end{tikzpicture}
\ .
\]
It follows from the IHX relation that a box commutes with any Jacobi diagram.
In particular, 
\[
\begin{tikzpicture}[scale=0.4, baseline={(0,0.9)}, densely dashed]
    \draw[solid] (0,5) -- (0,4);
    \draw[solid] (0,3) -- (0,2);
    \draw[solid,->] (0,1) -- (0,0);
    \draw[solid] (2,5) -- (2,4);
    \draw[solid] (2,3) -- (2,2);
    \draw[solid,->] (2,1) -- (2,0);
    \draw[solid] (-1, 3) -- (3, 3) -- (3, 4) -- (-1, 4) -- cycle;
    \draw[solid] (-1, 1) -- (3, 1) -- (3, 2) -- (-1, 2) -- cycle;
    \draw (3,3.5) .. controls +(1,0) and +(0,0.1) .. (4,3.2);
    \draw (3,1.5) .. controls +(1,0) and +(0,-0.1) .. (4,1.8);
    \fill[fill=gray!50] (4,2.5) circle [radius=0.7];
    \node at (4,2.5) {$J$};
\end{tikzpicture}
\ =\ 
\begin{tikzpicture}[scale=0.4, baseline={(0,0.9)}, densely dashed]
    \draw[solid,->] (0,5) -- (0,0);
    \draw[solid,->] (2,5) -- (2,0);
    \draw (0,3.5) .. controls +(1,0) and +(0,0.1) .. (1,3.2);
    \draw (0,1.5) .. controls +(1,0) and +(0,-0.1) .. (1,1.8);
    \fill[fill=gray!50] (1,2.5) circle [radius=0.7];
    \node at (1,2.5) {$J$};
\end{tikzpicture}
\ +\ 
\begin{tikzpicture}[scale=0.4, baseline={(0,0.9)}, densely dashed]
    \draw[solid,->] (0,5) -- (0,0);
    \draw[solid,->] (2,5) -- (2,0);
    \draw (2,3.5) .. controls +(1,0) and +(0,0.1) .. (3,3.2);
    \draw (2,1.5) .. controls +(1,0) and +(0,-0.1) .. (3,1.8);
    \fill[fill=gray!50] (3,2.5) circle [radius=0.7];
    \node at (3,2.5) {$J$};
\end{tikzpicture}
\ +\ 
\begin{tikzpicture}[scale=0.4, baseline={(0,0.9)}, densely dashed]
    \draw[solid,->] (0,5) -- (0,0);
    \draw[solid,->] (3,5) -- (3,0);
    \draw (0,2.5) -- (1,2.5);
    \draw (3,2.5) -- (2,2.5);
    \fill[fill=gray!50] (1.5,2.5) circle [radius=0.7];
    \node at (1.5,2.5) {$J$};
\end{tikzpicture}
\ +\ 
\begin{tikzpicture}[scale=0.4, baseline={(0,0.9)}, densely dashed]
    \draw[solid,->] (0,5) -- (0,0);
    \draw[solid,->] (3,5) -- (3,0);
    \draw (0,2.5) -- (1,2.5);
    \draw (3,2.5) -- (2,2.5);
    \fill[fill=gray!50] (1.5,2.5) circle [radius=0.7];
    \node at (1.5,2.5) {\rotatebox{180}{$J$}};
\end{tikzpicture}
\]
lies in the center.
Here, the first and second terms are obviously in the center, and thus $\symmetric{J}\ +\ \symmetric{\rotatebox{180}{$J$}}$ is in the center.
\end{proof}

\begin{corollary}[{\cite[Exercise in Section~5.11.2]{CDM12}}] \label{cor:A_leq2_is_in_center}
$\A_{\leq 2}(\downarrow\downarrow)$ is contained in the center of $\A(\downarrow\downarrow)$.
\end{corollary}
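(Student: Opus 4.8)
The plan is to deduce Corollary~\ref{cor:A_leq2_is_in_center} directly from Lemma~\ref{lem:box} by checking that every Jacobi diagram of degree $\leq 2$ on $\downarrow\downarrow$ either is already of the form $\symmetric{J}$ with $J$ symmetric under the $180^\circ$ rotation, or can be written as such after using the STU relation to push all legs onto a ``local'' configuration between the two strands. Concretely, I would first enumerate a spanning set of $\A_{\leq 2}(\downarrow\downarrow)$. In degree $0$ there is only the empty diagram $1$, which is central. In degree $1$ the only connected diagram is the single chord $\onechord$ joining the two strands (a self-chord on one strand vanishes by AS after applying STU, or rather is expressible via chords between strands); this chord is visibly invariant under the $180^\circ$ rotation that swaps its two endpoints, so Lemma~\ref{lem:box} applies and it is central.

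Next I would treat degree $2$. Using the STU relation one reduces any degree-$2$ diagram to a combination of: (i) the square of the single chord, i.e. two parallel chords between the strands, and (ii) a diagram with one trivalent vertex, which after STU becomes a sum of chord-type diagrams, plus (iii) diagrams both of whose legs on a given side sit on the \emph{same} strand — but such a ``strut'' configuration with a trivalent vertex can again be resolved by STU into diagrams with all four univalent vertices placed as two chords between the strands. The upshot is that $\A_2(\downarrow\downarrow)$ is spanned by diagrams of the shape $\symmetric{J}$ where $J$ is a uni-trivalent graph with two legs exiting to the left strand and two to the right. For each such $J$ I would verify that $J$ is fixed by the $180^\circ$ rotation: for the double chord this is clear, and for the unique degree-$2$ graph with a trivalent vertex (the ``H''-shaped or $Y$-with-a-bar piece) the rotation symmetry is again immediate from the pictures. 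Then the ``in particular'' clause of Lemma~\ref{lem:box} gives centrality of each spanning element, hence of all of $\A_{\leq 2}(\downarrow\downarrow)$.

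I expect the main obstacle to be the bookkeeping in step two: making precise which diagrams constitute a spanning set of $\A_2(\downarrow\downarrow)$ and checking that the STU-reductions genuinely land in the ``symmetric box'' form rather than producing an asymmetric remainder. In particular one must be careful that reducing a leg that lands on a single strand (via STU) does not break the rotational symmetry; the right way to organize this is to observe that $\A(\downarrow\downarrow)$ is generated, modulo relations, by horizontal diagrams together with low-degree corrections, and that in degree $\leq 2$ all generators can be taken rotationally symmetric. Since the paper cites this as an exercise from \cite[Section~5.11.2]{CDM12}, I would keep the write-up short: enumerate the at most two or three relevant diagram types in each degree, note their rotational symmetry, and invoke Lemma~\ref{lem:box}. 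If one prefers to avoid the case analysis entirely, an alternative is to note that $\A_{\leq 2}(\downarrow\downarrow)$ is spanned by $1$, the chord $c$, $c^2$, and the image under $\iota$ of the degree-$2$ part of $U(\mathfrak{t}_2)$ together with one genuinely non-horizontal degree-$2$ diagram, all of which are manifestly rotation-invariant; but the direct approach via Lemma~\ref{lem:box} is cleanest.
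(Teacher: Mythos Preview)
Your strategy—reduce to a short list of generators and apply the rotation-symmetric case of Lemma~\ref{lem:box}—is exactly the paper's approach; its proof is the single sentence ``By the AS and STU relations, it suffices to show that $\onechord$ and $\phichord$ are in the center, which follows from Lemma~\ref{lem:box}.'' Two details of your execution, however, are wrong. First, the self-chord $c_1$ on a single strand does \emph{not} vanish and is \emph{not} a combination of between-strand chords: there are no AS/IHX/STU relations among degree-$1$ diagrams, so $c_1$, $c_2$, $t_{12}$ are linearly independent in $\A_1(\downarrow\downarrow)$. The centrality of $c_1$, $c_2$ (and more generally of any diagram with all legs on a single strand) is instead the standard consequence of the commutativity of $\A(\downarrow)$, and the paper already relies on it inside the proof of Lemma~\ref{lem:box}, where such terms are declared ``obviously in the center''; you should invoke that fact rather than a nonexistent vanishing.

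Second, your identification of the connected degree-$2$ generator is off. A diagram of the shape treated in Lemma~\ref{lem:box} has exactly two legs, so in degree $2$ the blob $J$ carries two univalent and \emph{two} trivalent vertices, not one; this forces $J$ to be an edge with a bubble, i.e., the diagram is $\phichord$, not an ``$H$'' or ``$Y$-with-a-bar''. The remaining connected degree-$2$ diagrams—the $Y$-shapes with three legs—reduce via a single STU to differences of products of degree-$1$ chords, so once degree $1$ is handled they are automatically central. With these two corrections your argument coincides with the paper's.
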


\begin{proof}
By the AS and STU relations, it suffices to show that $\onechord$ and $\phichord$ are in the center, which follows from Lemma~\ref{lem:box}.
\end{proof}

For a finite set $C$, let $\A(C)$ denote the $\Q$-vector space generated by Jacobi diagrams whose univalent vertices are colored by elements of $C$ subject to the AS and IHX relations.
For $n\geq 3$, define $J_n \in \A(\downarrow\downarrow)$ and $\widetilde{J}_n \in \A(\{a,b\})$ by
\[
J_n=
\begin{tikzpicture}[scale=0.4, baseline={(0,0.6)}, densely dashed]
\draw[->,solid] (0,4.5) -- (0,-0.5);
\draw[->,solid] (2,4.5) -- (2,-0.5);
\draw (1,4) -- (1,0);
\draw (0,4) -- (2,4);
\draw (0,3) -- (1,3);
\node at (0.5,2.3){$\vdots$};
\draw (0,1) -- (1,1);
\draw (0,0) -- (2,0);
\end{tikzpicture}
\quad\text{and}\quad
\widetilde{J}_n=
\begin{tikzpicture}[scale=0.4, baseline={(0,0.6)}, densely dashed]
\draw (1,4) -- (1,0);
\draw (0,4) -- (2,4) node[at start, anchor=east]{$a$} node[at end, anchor=west]{$b$};
\draw (0,3) -- (1,3) node[at start, anchor=east]{$a$};
\node at (0.5,2.3){$\vdots$};
\draw (0,1) -- (1,1) node[at start, anchor=east]{$a$};
\draw (0,0) -- (2,0) node[at start, anchor=east]{$a$} node[at end, anchor=west]{$b$};
\end{tikzpicture}\ ,
\]
where each Jacobi diagram has $n+1$ univalent vertices.

\begin{proposition}
\label{prop:J3}
Every element in $\A_{\leq 3}(\downarrow\downarrow)$ commutes each other.
\end{proposition}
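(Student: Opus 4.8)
The plan is to deduce the statement from the two results already in hand: Corollary~\ref{cor:A_leq2_is_in_center}, which says $\A_{\leq 2}(\downarrow\downarrow)$ is central, and Lemma~\ref{lem:box}, which says rotation-symmetric ``box'' diagrams are central. Since every element of $\A_{\leq 2}(\downarrow\downarrow)$ commutes with all of $\A(\downarrow\downarrow)$, it remains only to show that any two elements of $\A_3(\downarrow\downarrow)$ commute. I would in fact prove the stronger assertion that $\A_3(\downarrow\downarrow)$ lies in the center of $\A(\downarrow\downarrow)$; combined with Corollary~\ref{cor:A_leq2_is_in_center} this shows $\A_{\leq 3}(\downarrow\downarrow)$ is contained in the center, which implies the Proposition. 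Because the center is a subalgebra, it suffices to produce a spanning set of $\A_3(\downarrow\downarrow)$ each of whose elements is manifestly central.

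The spanning set I would take consists of: (i) products of Jacobi diagrams of degree $\leq 2$ — that is, products $D_1 D_2$ with $\deg D_1 = 1$, $\deg D_2 = 2$, together with triple products of three degree-$1$ diagrams; (ii) the diagram $J_3$; and (iii) the degree-$3$ Jacobi diagrams possessing a closed (strand-free) dashed component. The elements of type (i) are products of members of $\A_{\leq 2}(\downarrow\downarrow)$, hence central by Corollary~\ref{cor:A_leq2_is_in_center}; those of type (iii) are central because a closed dashed component floats freely past any diagram; and $J_3$ is central by Lemma~\ref{lem:box}, since the $180^{\circ}$ rotation interchanges the two strands and the two ends while carrying $J_3$ to itself (its two rungs each meet both strands symmetrically), so $J_3$ is exactly the half-sum occurring in Lemma~\ref{lem:box} and therefore central. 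Granting that these elements span $\A_3(\downarrow\downarrow)$, the Proposition follows at once.

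Proving the spanning statement is the part I expect to be the real obstacle. I would argue by the standard diagrammatic reduction: in a degree-$3$ Jacobi diagram on $\downarrow\downarrow$, every dashed component meeting a strand has a trivalent vertex adjacent to that strand, so iterated use of the STU relation pushes all such trivalent vertices onto the strands, the IHX relation accounting for the ambiguity in the order of resolution; the output is a $\Q$-combination of chord diagrams, diagrams carrying a closed component, and (from the IHX bookkeeping) multiples of $J_3$. One then uses the relations among chord diagrams that come from STU and IHX — the same relations responsible for $\A^{\hor}(\downarrow\downarrow) = \iota(U(\mathfrak{t}_2))$ being one-dimensional in each degree — to rewrite a degree-$3$ chord diagram on $\downarrow\downarrow$ as a $\Q$-combination of products of degree-$1$ chord diagrams. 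The care is all in the case analysis: one should enumerate, up to AS and IHX, the connected degree-$3$ uni-trivalent graphs and the ways their (at most six) univalent legs can sit on the two oriented strands, and check in each case that the diagram equals, modulo STU and IHX, a product of lower-degree diagrams, a multiple of $J_3$, or a diagram with a closed component. Once the spanning set is confirmed, the centrality of each member established above completes the proof.
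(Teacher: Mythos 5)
Your proposal hinges on a claim that is not only unproved but false: that $\A_3(\downarrow\downarrow)$, and in particular $J_3$, lies in the center of $\A(\downarrow\downarrow)$. This contradicts Proposition~\ref{prop:J3J2n+1} of the paper, which shows that $J_3$ does \emph{not} commute with $J_{2n+1}$ for $n\geq 2$; indeed, the non-centrality of $J_3$ is exactly what drives Theorem~\ref{thm:deg_8}, so a proof of your stronger assertion would collapse the paper's main result. The specific misstep is the appeal to Lemma~\ref{lem:box}: that lemma concerns diagrams in which a blob $J$ is joined to the two strands by a \emph{single} dashed edge on each strand (one univalent vertex per strand), and its conclusion is that the sum of such a diagram with its $180^\circ$-rotated partner is central. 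The diagram $J_3$ has two univalent vertices on each strand and cannot be put in that form, so it is not ``the half-sum occurring in Lemma~\ref{lem:box}''; rotational symmetry of $J_3$ as a picture gives no centrality. (The lemma does apply, for instance, to a chord with two bubbles inserted, which has one leg on each strand and a rotation-symmetric blob --- but that is a different diagram.)

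Beyond this fatal point, your spanning claim is also not the right reduction and is unsubstantiated: connected degree-$3$ diagrams such as the twice-bubbled chord or a strand self-arc with bubbles are neither products of lower-degree diagrams, nor multiples of $J_3$, nor diagrams with a closed dashed component, so your proposed spanning set is at best incomplete, and your assertion that every degree-$3$ chord diagram reduces to products of degree-$1$ chords is not justified by the one-dimensionality of $U(\mathfrak{t}_2)$ in each degree (chords with both endpoints on a single strand are not horizontal). The paper's proof proceeds differently and more modestly: after discarding non-connected diagrams (which reduce, via STU and Corollary~\ref{cor:A_leq2_is_in_center}, to central elements), it shows that every connected degree-$3$ diagram is a linear combination of $J_3$ and of the three degree-$1$ diagrams with a bubble inserted twice; the latter are handled by the centrality results already available, and for $J_3$ one only needs that it commutes with these elements and with itself. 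The statement being proved is pairwise commutativity of $\A_{\leq 3}(\downarrow\downarrow)$, never centrality of its degree-$3$ part, and any correct argument must respect that distinction.
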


\begin{proof}
Let $J$ be a Jacobi diagram in $\A_{3}(\downarrow\downarrow)$.
When (the dashed part of) $J$ is not connected, by the STU relation, $J$ is expressed as a sum of connected Jacobi diagrams of degree $3$ and products of diagrams of $\deg\leq 2$.
Thus, by Corollary~\ref{cor:A_leq2_is_in_center}, it suffices to show that every connected diagram of degree $3$ commutes each other.
Recall here that the AS, IHX, and STU relations give useful identities 
\[
 \begin{tikzpicture}[scale=0.4, baseline={(0,0.4)}, densely dashed]
 \draw (-2,0) -- (2,0);
 \draw (-1,0) -- (0,1);
 \draw (1,0) -- (0,1);
 \draw (0,3) -- (0,1);
\end{tikzpicture}
=
\frac{1}{2}
\begin{tikzpicture}[scale=0.4, baseline={(0,0.4)}, densely dashed]
 \draw (-2,0) -- (2,0);
 \draw (0,3) -- (0,2);
 \draw (0,0) -- (0,1);
 \draw (0,1.5) circle [radius=0.5];
\end{tikzpicture}\ ,
\qquad
\begin{tikzpicture}[scale=0.4, baseline={(0,0.4)}, densely dashed]
 \draw[solid,->] (-2,0) -- (2,0);
 \draw (-1,0) -- (0,1);
 \draw (1,0) -- (0,1);
 \draw (0,3) -- (0,1);
\end{tikzpicture}
=
\frac{1}{2}
\begin{tikzpicture}[scale=0.4, baseline={(0,0.4)}, densely dashed]
 \draw[solid,->] (-2,0) -- (2,0);
 \draw (0,3) -- (0,2);
 \draw (0,0) -- (0,1);
 \draw (0,1.5) circle [radius=0.5];
\end{tikzpicture}\ .
\]
Using these relations, we show that any connected diagram of degree $3$ can be expressed as a linear combination of  
$J_3$ and Jacobi diagrams obtained from
\[
\begin{tikzpicture}[scale=0.4, baseline={(0,0.4)}, densely dashed]
    \draw[solid,->] (0,3) -- (0,0);
    \draw[solid,->] (2,3) -- (2,0);
    \draw (0,1) .. controls +(1,0) and +(1,0) .. (0,2);
\end{tikzpicture}
\ ,\qquad 
\begin{tikzpicture}[scale=0.4, baseline={(0,0.4)}, densely dashed]
    \draw[solid,->] (0,3) -- (0,0);
    \draw[solid,->] (2,3) -- (2,0);
    \draw (2,1) .. controls +(1,0) and +(1,0) .. (2,2);
\end{tikzpicture}
\ ,\qquad 
\begin{tikzpicture}[scale=0.4, baseline={(0,0.4)}, densely dashed]
    \draw[solid,->] (0,3) -- (0,0);
    \draw[solid,->] (2,3) -- (2,0);
    \draw (0,1.5) -- (2,1.5);
\end{tikzpicture}
\]
by inserting a bubble
(\ $
\begin{tikzpicture}[scale=0.4, baseline={(0,0.3)}, densely dashed]
    \draw (0,1) -- (3,1);
\end{tikzpicture}
\ \leadsto\ 
\begin{tikzpicture}[scale=0.4, baseline={(0,0.3)}, densely dashed]
    \draw (0,1) -- (1,1);
    \draw (3,1) -- (2,1);
    \draw (2,1) arc[start angle=0, end angle=360, radius=0.5];
\end{tikzpicture}
$\ ) twice.
Therefore, Corollary~\ref{cor:A_leq2_is_in_center} completes the proof.
\end{proof}

As a corollary of Proposition~\ref{prop:J3}, we show Proposition~\ref{prop:deg_6}.

\begin{proof}[Proof of Proposition~\ref{prop:deg_6}]
There exists $F\in \widehat{\A}(\downarrow\downarrow)$ such that $\Phi'$ is obtained from $\Phi$ by twisting via $F$. 
It follows from \cite[Theorem~7]{LeMu96CM} that $F\cdot Z_{\Phi}(T)\cdot F^{-1} = Z_{\Phi'}(T)$, and hence it suffices to show $(F\cdot Z_{\Phi}(T)\cdot F^{-1})_{\leq 6} = Z_{\Phi}(T)_{\leq 6}$.
Let $z_d$ and $F_d$ be the degree $d$ terms of $Z_{\Phi}(T)$ and $F$, respectively.
Then, Corollary~\ref{cor:A_leq2_is_in_center} implies
\begin{align*}
F\cdot Z_{\Phi}(T)\cdot F^{-1} 
&\equiv 1+z_1+z_2+F\cdot z_3\cdot F^{-1}+z_4+z_5+z_6 \\
&\equiv 1+z_1+z_2+z_3+F_3\cdot z_3-z_3\cdot F_3+z_4+z_5+z_6 
\end{align*}
modulo $\deg\geq 7$.
Since we have $F_3\cdot z_3=z_3\cdot F_3$ by Proposition~\ref{prop:J3}, it completes the proof.
\end{proof}

\subsection{Proof of Theorem~\ref{thm:deg_8} and Proposition~\ref{prop:general_replacement}}
\label{subsec:main_theorem}

The goal of this subsection is to prove Theorem~\ref{thm:deg_8}, and the main task is to establish the following key proposition.

\begin{proposition}
\label{prop:J3J2n+1}
$J_3$ does not commute with the Jacobi diagram $J_{2n+1}$ for $n\geq 2$.
\end{proposition}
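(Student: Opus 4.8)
\textbf{Proof strategy for Proposition~\ref{prop:J3J2n+1}.}

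The plan is to detect the non-commutativity by applying a suitable Lie algebra weight system and computing in a concrete algebra. Since $J_3$ and $J_{2n+1}$ both lie in the horizontal subalgebra $\A^{\hor}(\downarrow\downarrow)$, and since horizontal Jacobi diagrams on two strands correspond to elements of $U(\mathfrak{t}_2)$ — but $\mathfrak{t}_2$ is abelian, so that is too crude — I would instead work with a weight system that does \emph{not} factor through the horizontal picture only. Concretely, pick the metrized Lie algebra $(\gl_N(\K), B_0)$ or $(\sl_2, B_0)$ and compute $W_{\g}(J_3)$ and $W_{\g}(J_{2n+1})$ inside $U(\g)^{\otimes 2}$ (or $\End(V)^{\otimes 2}$ after composing with a representation), then verify that their images fail to commute for suitable $N$ or for a suitable representation $V$. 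The point is that a weight system is an algebra homomorphism $\A(\downarrow\downarrow)_\K \to U(\g)^{\otimes 2}$, so if $W_\g([J_3,J_{2n+1}])\neq 0$ in $U(\g)^{\otimes 2}$ then $[J_3,J_{2n+1}]\neq 0$ in $\A(\downarrow\downarrow)$.

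The first step is to identify $W_\g(J_m)$ in closed form. Reading off the definition: $J_m$ has a single internal dashed ``spine'' with $m-1$ rungs, the first and last rung going to strand $2$ and the middle $m-2$ rungs forming a caterpillar attached to strand $1$ (for $J_3$ this is exactly the element whose image is $2[A,B]$ up to lower-order corrections, as recorded in Example~\ref{ex:twisting}; more precisely $\iota^{-1}$ of such diagrams lives in $U(\mathfrak{t}_2)$ only after the spine is resolved, but before resolving, $J_m$ is \emph{not} horizontal). Using the STU identities displayed in the proof of Proposition~\ref{prop:J3}, one can reduce $J_m$ to a combination of a horizontal ``ladder'' term $t_{12}^{\,k}$ and a genuinely non-horizontal term; alternatively, one computes $W_\g(J_m)$ directly from the state-sum definition. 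I expect the answer to be expressible in terms of the Casimir element $c = \sum_a \bm e_a\otimes \bm e_a$ and iterated brackets against it: roughly, $W_\g(J_{2n+1})$ should be $\sum \mathrm{ad}$-type operators of ``length $2n$'' built from $t_{12}$, and $W_\g(J_3)$ the corresponding length-$2$ object. Then $[W_\g(J_3),W_\g(J_{2n+1})]$ becomes a nested-commutator computation in $U(\g)^{\otimes 2}$.

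The cleanest route, and the one I would actually pursue, is to use the $\sl_2$ weight system together with the skein relation \eqref{eq:skein_sl2}: resolving every internal trivalent vertex via that skein relation turns $W_{\sl_2}(J_m)$ into an explicit polynomial in the two ``cap/cup'' generators on the two strands — effectively a polynomial in the single Casimir $c_{12}\in U(\sl_2)^{\otimes 2}$ plus terms that see the strand structure. One then has to exhibit a specific representation $V$ (say $\mathrm{Sym}^k$ of $\sl_2$, or the standard representation of $\gl_N$ with $N$ large) in which the two resulting matrices visibly do not commute; checking a single matrix entry, or a single trace of a commutator against a third element, suffices. The main obstacle will be the bookkeeping in the reduction of $J_{2n+1}$ to a manageable normal form — the caterpillar of $2n-1$ middle rungs produces, via iterated STU/IHX, a sum over many terms — and then verifying that the leading term of $[W_\g(J_3),W_\g(J_{2n+1})]$ genuinely survives rather than cancelling; I would handle this by an induction on $n$, showing that the ``top'' contribution to $W_\g(J_{2n+1})$ is $2\,\mathrm{ad}_{t_{12}}^{2n-1}$-type and hence its bracket with the $\mathrm{ad}_{t_{12}}^{}$-type part of $W_\g(J_3)$ is nonzero already for $\g=\sl_2$, $V=\mathbb C^2\otimes\mathbb C^2$ or a slightly larger module. (A shortcut worth trying first: since $J_3=2[A,B]$-image and the acknowledgments credit Bar-Natan for this proposition, it is plausible the intended proof simply computes both sides in $U(\mathfrak t_3)\to U(\sl_2)^{\otimes 2}$ after using that $J_{2n+1}$, though not horizontal itself, differs from a horizontal ladder by lower-degree horizontal terms — which commute with $J_3$ by Proposition~\ref{prop:J3} only up to degree $3$, so care is needed — and concludes from a degree-by-degree obstruction in the universal enveloping algebra.)
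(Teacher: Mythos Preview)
Your approach has a genuine and fatal gap: the specific weight systems you propose cannot detect the non-commutativity. Indeed, the paper proves in Section~\ref{sec:weight_system} that the image of the universal $\sl_2$ weight system $W_{\sl_2}\colon \A(\downarrow\downarrow)_\K \to U(\sl_2)^{\otimes 2}$ is \emph{commutative} (Proposition~\ref{prop:univ_sl2_w_sys}), and likewise that the images of $W_{\gl_N}^{\St}$, $W_{\sl_N}^{\St}$, $W_{\so_N}^{\St}$, and $W_{\sp_{2N}}^{\St}$ are all commutative (Corollaries~\ref{cor:w_sys_im_gl_N}, \ref{cor:w_sys_im_sl_N}, \ref{cor:w_sys_im_so_N} and Proposition~\ref{prop:w_sys_sp2N}). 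This is precisely the content of Theorem~\ref{thm:weight_system}. So $W_\g([J_3,J_{2n+1}])=0$ for every one of the weight systems you name, and no representation $V$ of $\sl_2$ or $\gl_N$ (standard) will exhibit non-commuting matrices. The point is subtle: the commutator $[J_3,J_{2n+1}]$ is nonzero in $\A(\downarrow\downarrow)$ but lies in the kernel of all these classical weight systems.

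The paper's proof goes in an entirely different direction. It observes via Lemma~\ref{lem:connect} that $J_3\cdot J_{2n+1}-J_{2n+1}\cdot J_3$ is a connected tree diagram whose image under the map $\tau$ to labelled tree diagrams $\A^{c,t}(\{a,b\})$ equals the ``one-gluing'' bracket $[\widetilde{J}_3,\widetilde{J}_{2n+1}]_1$. It then applies Levine's injection $\eta\colon \A^{c,t}(C)\to \Q C\otimes L(C)$ into the free Lie algebra, projects onto the summand $b\otimes(\text{Lie words with }2n{+}2\text{ copies of }a\text{ and }2\text{ of }b)$, and checks by an explicit Hall-basis computation that a coefficient $2n-2\neq 0$ survives. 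No metrized Lie algebra enters at all; the obstruction lives in the free Lie algebra $L(a,b)$.
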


Let $\A$ be $\A(X)$ or $\A(C)$, where $X$ is an oriented compact $1$-manifold.
We write $\A^c$ for the the subspace of $\A$ generated by connected diagrams and $\A^t$ for the quotient of $\A$ by declaring $J=0$ if the first Betti number of (the dashed part of) $J$ is positive.
We also define $\A^{c,t}$ to be the image of $\A^{c}$ under the projection $\A\to \A^{t}$.
Note that $\A^t(C)$ is regarded as a subspace of $\A(C)$.

Let $\downarrow^C$ denote the disjoint union of intervals indexed by elements of $C$.
Define a linear map $\tau\colon \A^{c}(\downarrow^C)\to \A^{c,t}(C)$ as follows: $\tau(J)=0$ if $b_1(J)>0$, otherwise $\tau(J)$ is the diagram obtained from $J$ by eliminating the intervals and by assigning $c_i\in C$ to a univalent vertex if it is attached to the interval labeled by $c_i$.
By definition, it induces $\tau\colon \A^{c,t}(\downarrow^C)\to \A^{c,t}(C)$.
Furthermore, the Poincar\'e--Birkhoff--Witt isomorphism $\chi\colon \A(C)\to \A(\downarrow^C)$ induces $\A^{c,t}(C)\to \A^{c,t}(\downarrow^C)$, which is the inverse of $\tau$ (cf.~\cite[Section~5.7.1]{CDM12}).

For $J,J'\in \A^{c,t}(C)$, define $[J,J']_1\in \A^{c,t}(C)$ by
\[
[J,J']_1 = \sum 
\begin{tikzpicture}[scale=0.4, baseline={(0,0.2)}, densely dashed]
\fill[fill=gray!50] (-2,2) circle [radius=1];
\fill[fill=gray!50] (2,2) circle [radius=1];
\node at (-2,2) {\small $J$};
\node at (2,2) {\small $J'$};
\draw (-1,1.5) -- (0,1);
\draw (1,1.5) -- (0,1);
\draw (0,1) -- (0,0) node[anchor=north] {\small $c_i$};
\end{tikzpicture}\ ,
\]
where the sum runs over all ways of connecting a univalent vertex in $J$ colored by $c_i$ and that in $J'$ for each $c_i\in C$.
Then, by definition, we obtain the next lemma.

\begin{lemma}
\label{lem:connect}
For $J,J'\in \A^{c,t}(\downarrow^C)$, the difference $J\cdot J'-J'\cdot J$ lies in $\A^{c,t}(\downarrow^C)$ and $\tau(J\cdot J'-J'\cdot J) = [\tau(J), \tau(J')]_1$ holds.
\end{lemma}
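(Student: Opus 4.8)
The plan is to identify $J\cdot J'-J'\cdot J$, inside the quotient $\A^t(\downarrow^C)$ in which closed dashed loops are killed, with a sum of connected tree diagrams produced by the STU relation, and then to recognize those diagrams as exactly the ones appearing in $[\tau(J),\tau(J')]_1$. Recall that $J\cdot J'$ is obtained by stacking $J$ over $J'$: its dashed part is the disjoint union of the (tree-shaped) dashed parts of $J$ and of $J'$, so it has vanishing first Betti number but is disconnected, and likewise for $J'\cdot J$; moreover, on each strand $c\in C$ the univalent vertices of $J\cdot J'$ read, from top to bottom, as the legs of $J$ on $c$ followed by the legs of $J'$ on $c$, whereas for $J'\cdot J$ the $J'$-legs come first. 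Thus passing from $J\cdot J'$ to $J'\cdot J$ amounts, strand by strand, to moving every $J$-leg below every $J'$-leg.

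I would carry this out by a ``bubble sort'': each elementary transposition of a $J$-leg past an adjacent $J'$-leg on a common strand $c$ is governed by STU, which (in the paper's normalization) rewrites the configuration with these two legs in one order as the configuration with them in the opposite order, plus the diagram in which the two legs are fused at a single new trivalent vertex $N$ whose third edge is a leg on $c$. Performing all such transpositions turns $J\cdot J'$ into $J'\cdot J$ and contributes, for each pair consisting of a leg of $J$ on $c$ and a leg of $J'$ on $c$, exactly one ``fusion'' diagram with coefficient $+1$; since such a fusion diagram is obtained by joining the two trees $J$ and $J'$ at the single new vertex $N$, it is a tree, hence connected with vanishing first Betti number, and no closed dashed loop is ever created (the two fused legs always belong to different trees). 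Consequently $J\cdot J'-J'\cdot J$ equals this finite sum of fusion diagrams in $\A^t(\downarrow^C)$, which already shows that it lies in $\A^{c,t}(\downarrow^C)$. Applying $\tau$, which deletes the strands and colours each remaining leg by its strand, sends the fusion diagram coming from a pair $(\ell,\ell')$ of $c$-legs to the diagram in which $\tau(J)$ and $\tau(J')$ are joined at a new trivalent vertex along the corresponding $c$-coloured legs, with a new $c$-coloured leg on the third edge --- that is, to exactly one summand of $[\tau(J),\tau(J')]_1$; summing over all pairs and all colours gives $\tau(J\cdot J'-J'\cdot J)=[\tau(J),\tau(J')]_1$.

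The only point requiring care is the bookkeeping of signs and cyclic orders: one must check that STU, oriented as in the paper, contributes each fusion diagram with coefficient exactly $+1$ and assigns the new trivalent vertex the same cyclic order as the one used in the definition of $[\cdot,\cdot]_1$, so that no sign slips in upon applying $\tau$; by contrast, the positions on the strands at which the new legs get inserted (which depend on the order in which the transpositions are performed) play no role, since $\tau$ discards them. I do not expect any serious obstacle beyond this routine verification --- indeed, as the paper indicates, the lemma is essentially a direct consequence of unwinding the definitions.
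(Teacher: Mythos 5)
Your argument is correct: the paper offers no written proof (it states the lemma follows ``by definition''), and your STU bubble-sort — sliding each leg of $J$ past each leg of $J'$ strand by strand, with each transposition contributing one fused tree diagram that $\tau$ sends to the corresponding summand of $[\tau(J),\tau(J')]_1$ — is exactly the verification the paper leaves implicit. The only point you flag, the sign/cyclic-order bookkeeping, is indeed routine and consistent with the paper's (implicit) convention in the definition of $[\cdot,\cdot]_1$, so nothing is missing.
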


For a Jacobi diagram $J\in \A(C)$, let $U(J)$ denote the set of univalent vertices of $J$ and let $\ell(u)$ be the color (label) of $u\in U(J)$.
We recall from \cite[Section~3]{Lev02}
that the map $\eta\colon \A^{c,t}(C)\to \Q{C}\otimes L(C)$ defined by $\sum_{u\in U(J)} \ell(u)\otimes J_u$, where $J_u$ is the binary tree obtained from $J$ by declaring $u$ a root $\ast$.
Here $J_u$ is regarded as an element of the free Lie algebra $L(C)$ over $\Q$ generated by $C$ as follows:
\[
\begin{tikzpicture}[scale=0.4, baseline={(0,-0)}, densely dashed]
\draw (0,0) -- (-2,2) node[anchor=south] {$c_1$};
\draw (-1,1) -- (0,2) node[anchor=south] {$c_2$};
\draw (0,0) -- (2,2) node[anchor=south] {$c_3$};
\draw (0,0) -- (0,-1) node[anchor=north] {$\ast$};
\end{tikzpicture}
\quad\leftrightarrow\quad
[[c_1,c_2],c_3].
\]

\begin{proof}[Proof of Proposition~\ref{prop:J3J2n+1}]
Let $C=\{a,b\}$ and let $\pr_b\otimes\pr_{2n+2,2}$ denote the projection from $\Q{C}\otimes L(C)$ to the direct summand generated by elements of the form $b\otimes x$, where a Lie monomial $x\in L(C)$ consists of exactly $2n+2$ copies of $a$ and two $b$.
Using Lemma~\ref{lem:connect}, we show that
\begin{align*}
& (\pr_b\otimes\pr_{2n+2,2})\circ\eta(\tau(J_3\cdot J_{2n+1}-J_{2n+1}\cdot J_3)) \\
&= (\pr_b\otimes\pr_{2n+2,2})\circ\eta\biggl(4\cdot 
\begin{tikzpicture}[scale=0.4, baseline={(0,0.5)}, densely dashed]
\draw (0,3) -- (0,0) node[at start, anchor=south]{$a$} node[at end, anchor=north]{$b$};
\draw (2,0) -- (2,1) node[at start, anchor=north]{$b$};
\draw (0,2) -- (1.1,2);
\draw (2.9,2) -- (5,2);
\node at (4.1,2.7){$\cdots$};
\draw (5,3) -- (5,0) node[at start, anchor=south]{$a$} node[at end, anchor=north]{$b$};
\draw (1,3) .. controls +(0,-1) and +(-1,1) .. (2,1) node[at start, anchor=south]{$a$};
\draw (3,3) .. controls +(0,-1) and +(1,1) .. (2,1) node[at start, anchor=south]{$a$};
\end{tikzpicture}
\ \biggr) \\
&= 
4b \otimes \biggl(
-
\begin{tikzpicture}[scale=0.4, baseline={(0,0.3)}, densely dashed]
\draw (0,2) -- (0,0) node[at start, anchor=south]{$a$} node[at end, anchor=north]{$\ast$};
\draw (1,2) -- (1,1) node[at start, anchor=south]{$a$};
\draw (2,2) -- (2,1) node[at start, anchor=south]{$b$};
\draw (3,2) -- (3,1) node[at start, anchor=south]{$a$};
\node at (4,1.7){$\cdots$};
\draw (5,2) -- (5,0) node[at start, anchor=south]{$a$} node[at end, anchor=north]{$b$};
\draw (0,1) -- (5,1);
\end{tikzpicture}
+
\begin{tikzpicture}[scale=0.4, baseline={(0,0.3)}, densely dashed]
\draw (0,2) -- (0,0) node[at start, anchor=south]{$a$} node[at end, anchor=north]{$b$};
\draw (1,2) -- (1,1) node[at start, anchor=south]{$a$};
\draw (2,0) -- (2,1) node[at start, anchor=north]{$\ast$};
\draw (3,2) -- (3,1) node[at start, anchor=south]{$a$};
\node at (4,1.7){$\cdots$};
\draw (5,2) -- (5,0) node[at start, anchor=south]{$a$} node[at end, anchor=north]{$b$};
\draw (0,1) -- (5,1);
\end{tikzpicture}
-
\begin{tikzpicture}[scale=0.4, baseline={(0,0.3)}, densely dashed]
\draw (0,2) -- (0,0) node[at start, anchor=south]{$a$} node[at end, anchor=north]{$b$};
\draw (1,2) -- (1,1) node[at start, anchor=south]{$a$};
\draw (2,2) -- (2,1) node[at start, anchor=south]{$b$};
\draw (3,2) -- (3,1) node[at start, anchor=south]{$a$};
\node at (4,1.7){$\cdots$};
\draw (5,2) -- (5,0) node[at start, anchor=south]{$a$} node[at end, anchor=north]{$\ast$};
\draw (0,1) -- (5,1);
\end{tikzpicture}
\biggr).
\end{align*}
Let us express it by the Hall basis $\{b,a,[b,a],[b,[b,a]],[a,[b,a]],\ldots\}$.
The second term is already of this form up to sign.
Using the IHX relation, one can express the sum of the first and third terms as a sum of $2n-2$ terms:
\[
\begin{tikzpicture}[scale=0.4, baseline={(0,0.3)}, densely dashed]
\draw (0,2) -- (0,0) node[at start, anchor=south]{$a$} node[at end, anchor=north]{$\ast$};
\draw (1,2) -- (1,1) node[at start, anchor=south]{$a$};
\draw (3,1) -- (3,2);
\draw (2,3) -- (3,2) node[at start, anchor=south]{$a$};
\draw (4,3) -- (3,2) node[at start, anchor=south]{$b$};
\draw (5,2) -- (5,1) node[at start, anchor=south]{$a$};
\node at (6,1.7){$\cdots$};
\draw (7,2) -- (7,1) node[at start, anchor=south]{$a$};
\draw (8,2) -- (8,1) node[at start, anchor=south]{$a$};
\draw (0,1) -- (9,1) node[at end, anchor=south]{$b$};
\end{tikzpicture}
+\cdots+
\begin{tikzpicture}[scale=0.4, baseline={(0,0.3)}, densely dashed]
\draw (0,2) -- (0,0) node[at start, anchor=south]{$a$} node[at end, anchor=north]{$\ast$};
\draw (1,2) -- (1,1) node[at start, anchor=south]{$a$};
\node at (2,1.7){$\cdots$};
\draw (3,2) -- (3,1) node[at start, anchor=south]{$a$};
\draw (5,1) -- (5,2);
\draw (4,3) -- (5,2) node[at start, anchor=south]{$a$};
\draw (6,3) -- (5,2) node[at start, anchor=south]{$b$};
\draw (7,2) -- (7,1) node[at start, anchor=south]{$a$};
\draw (8,2) -- (8,1) node[at start, anchor=south]{$a$};
\draw (0,1) -- (9,1) node[at end, anchor=south]{$b$};
\end{tikzpicture}
\ .
\]
By the IHX relation again, it is equal to
\[
(2n-2)
\begin{tikzpicture}[scale=0.4, baseline={(0,0.3)}, densely dashed]
\draw (-2,1) -- (0,1) node[at start, anchor=south]{$a$};
\draw (-1,2) -- (-1,1) node[at start, anchor=south]{$b$};
\draw (0,0) -- (0,1) node[at start, anchor=north]{$\ast$};
\draw (1,2) -- (1,1) node[at start, anchor=south]{$a$};
\draw (2,2) -- (2,1) node[at start, anchor=south]{$a$};
\node at (3,1.7){$\cdots$};
\draw (4,2) -- (4,1) node[at start, anchor=south]{$a$};
\draw (5,2) -- (5,1) node[at start, anchor=south]{$a$};
\draw (0,1) -- (6,1) node[at end, anchor=south]{$b$};
\end{tikzpicture}
+
\sum
\begin{tikzpicture}[scale=0.4, baseline={(0,0.3)}, densely dashed]
\draw (-4,1) -- (0,1) node[at start, anchor=south]{$b$};
\draw (-3,0) -- (-3,1) node[at start, anchor=north]{$a$};
\node at (-2,0.3){$\cdots$};
\draw (-1,0) -- (-1,1) node[at start, anchor=north]{$a$};
\draw (0,0) -- (0,1) node[at start, anchor=north]{$\ast$};
\draw (1,2) -- (1,1) node[at start, anchor=south]{$a$};
\node at (2,1.7){$\cdots$};
\draw (3,2) -- (3,1) node[at start, anchor=south]{$a$};
\draw (4,2) -- (4,1) node[at start, anchor=south]{$a$};
\draw (0,1) -- (5,1) node[at end, anchor=south]{$b$};
\end{tikzpicture},
\]
where the second term is a sum of diagrams having at least two $a$ on the left bottom.\footnote{A precise expansion with respect to the Hall basis can be obtained by \texttt{FreeLieAlgebra} module provided in \texttt{Sage} (\cite{sagemath}) for small $n$.}
Since all the terms are parts of the Hall basis and $2n-2\neq 0$, we conclude that $(\pr_b\otimes\pr_{2n+2,2})\circ\eta(\tau(J_3\cdot J_{2n+1}-J_{2n+1}\cdot J_3))\neq 0$.
\end{proof}

\begin{proof}[Proof of Theorem~\ref{thm:deg_8}]
We first recall from \cite[Proposition~13.1]{HaMa00} or \cite[Proposition~E.24]{Oht02} that there exists a $2$-component string link $T_n$ satisfying $Z_\Phi(T_n)_{\leq 2n+1} = 1+J_{2n+1}$ for any associator $\Phi\in \widehat{\A}(\downarrow\downarrow\downarrow)_{\K}$.

For $\alpha\in \K\setminus\{0\}$, let $\Phi'$ be the associator obtained from $\Phi$ by twisting via an element $F= 1 + \alpha J_3 +(\deg\geq 4) \in \widehat{\A}(\downarrow\downarrow)_{\K}$.
Then, $F\cdot Z_{\Phi}(T_n) = Z_{\Phi'}(T_n)\cdot F$.
By comparing both sides in degree $2n+3$, we have $Z_{\Phi}(T_n)_{2n+3} = Z_{\Phi'}(T_n)_{2n+3}$.
On the other hand, by comparing them in degree $2n+4$, we have
\[
Z_{\Phi}(T_n)_{2n+4} +\alpha J_3\cdot J_{2n+1} = Z_{\Phi'}(T_n)_{2n+4} +\alpha J_{2n+1}\cdot J_3.
\]
Since $J_3\cdot J_{2n+1}-J_{2n+1}\cdot J_3\neq 0$ by Proposition~\ref{prop:J3J2n+1}, we conclude that $Z_{\Phi}(T_n)_{2n+4} \neq Z_{\Phi'}(T_n)_{2n+4}$.

When $\Phi$ is horizontal, we can choose $\Phi'$ to be horizontal as follows. Suppose that there is an element $\sigma \in \GRT_1(\K)$ such that 
\begin{align}
    \Phi' = \sigma \bullet \Phi \equiv \Phi + 2 \alpha [A+B, [A,B]] \bmod (\deg\geq  4).
    \label{eq:twistor}
\end{align}
Then, by Lemma~\ref{lem:twistor}, $\Phi'$ is obtained by twisting $\Phi$ via a twistor $F(\sigma) \in \widehat{\A}(\downarrow\downarrow)_{\K}$ such that $F(\sigma)= 1 + \alpha J_3 +(\deg\geq 4)$, and the same line of the above argument applies in this case as well. Thus, to prove the claim, it suffices to show the existence of such $\sigma\in \GRT_1(\K)$. In fact, there are many such $\sigma$.
For instance, we can take $\sigma = \mathrm{Exp}(2\alpha \sigma_3)$, where $\mathrm{Exp}\colon \grt_1(\K) \to \GRT_1(\K)$ is the exponential morphism (cf.~\cite[Chapter~14]{Mil17}), $\grt_1(\K)$ denotes the Lie algebra of $\GRT_1(\K)$, and $\sigma_3 \coloneqq [A+B,[A,B]]$. 
Recall from \cite[Proposition~6.3 and Remarks~(2) on page~859]{Dri91} that $\sigma_3$ lies in $\grt_1(\K)$ and spans its degree $3$ part. 
Also note that $\sigma=\mathrm{Exp}(2 \alpha \sigma_3) \equiv 1 + 2 \alpha \sigma_3 \bmod (\deg\geq  4)$ (see \cite[Section~5]{Dri91}). 
Hence, we have \eqref{eq:twistor} as desired and complete the proof.
\end{proof}

As mentioned in Remark~\ref{rem:deg_8}, when we apply Theorem~\ref{thm:deg_8} to an even rational horizontal associator, $\Phi'$ can be chosen as the KZ associator since they are related by twisting as in the proof of Theorem~\ref{thm:deg_8} (recall Example~\ref{ex:twisting}). 
Similarly, from the proof of Theorem~\ref{thm:deg_8}, one can observe that the $p$-adic KZ asscociator $\Phi_\KZ^p \in \GRT_1(\Q_p)$ introduced by Furusho in \cite{Fur04, Fur07} acts on $\widehat{\A}(\downarrow \downarrow)_{\Q_p}$ non-trivially, and therefore the corresponding action of an element of $\GT_1(\Q_p)$ on $\widehat{\Q_p[\SL_2]}$ as well, when the $p$-adic zeta value $\zeta_p(3)$ does not vanish. For which $p$ the condition $\zeta_p(3) \neq 0$ is satisfied, see \cite[Example~2.19, Remark~2.20]{Fur04} and references therein.

\begin{proof}[Proof of Proposition~\ref{prop:general_replacement}]
As in \cite[Exercise in Section~10.4.2]{CDM12}, we consider the twisting by
\[
F = 1 + \ \begin{tikzpicture}[scale=0.4, baseline={(0,0.5)}, densely dashed]
    \draw[solid,->] (0,3) -- (0,0);
    \draw[solid,->] (2,3) -- (2,0);
    \draw (0,2) -- (2,1);
     \draw (0,1) -- (2,2);
\end{tikzpicture}\ ,
\]
which adds $2([t_{12}, t_{23}] + t_{12} t_{13} - t_{23} t_{13})$ to the degree $2$ term of an associator. Note that the twisting via $F$ does not preserve horizontal associators since their degree $2$ terms are uniquely determined as $\frac{1}{24}[t_{12}, t_{23}]$ (\cite[Theorem~1]{Fur10}). By setting $\Phi'$ as the associator obtained from an associator $\Phi$ via $F$, the equality $Z_{\Phi'}(T)= F \cdot Z_{\Phi}(T) \cdot F^{-1} = Z_{\Phi}(T)$ holds by Corollary~\ref{cor:A_leq2_is_in_center}.
\end{proof}

\section{Lie algebra weight systems for the algebra $\A(\downarrow\downarrow)$}
\label{sec:weight_system}

This section gives a proof of Theorem~\ref{thm:weight_system}.
Let $\K$ be a field of characteristic $0$.

\subsection{Weight systems associated with standard representations of classical Lie algebras for $\A(\downarrow \downarrow)$}
Bar-Natan invented an elegant way for computing the weight system associated with the standard representation of a certain Lie algebra for chord diagrams (see \cite{Bar91}, \cite[Section~6.1]{CDM12} for the case of chord diagrams on the circle $S^1$). This section specializes in the weight systems for $\A(\downarrow \downarrow)$ and finds that their images form commutative subalgebras.

Let $N$ be a positive integer. 
Let $e_{i,j}$ denote the $N\times N$ matrix whose $(i,j)$-entry is $1$ and $0$ elsewhere $(i,j = 1, \ldots, N)$. Recall that the matrices $e_{i,j}$ form a basis of $\gl_N = \gl_N(\K)$. As in Section~\ref{subsec:Lie_alg_weight}, the trace form $B_0(x,y)=\tr(xy)$ is a symmetric non-degenerate ad-invariant bilinear form on $\gl_N$. We have  $B_0(e_{i,j}, e_{k,l})= \delta_{il}\delta_{jk}$, and hence $e_{i,j}^{\ast} = e_{j,i}$ with respect to $B_0$.

\begin{proposition}\label{prop:w_sys_glN}
    Let $B_0$ be the trace form on $\gl_N$ and $\St\colon \gl_N \to \End(\K^{N})$ the standard representation of $\gl_N$. Let $W_{\gl_N}^{\St}\colon \A(\downarrow\downarrow)_\K\to \End(\K^N)^{\otimes 2}$ denote the weight system associated with the triple $(\gl_N, B_0, \St)$.
    Then, for a Jacobi diagram $J \in \A(\downarrow\downarrow)_\K$ considered as a chord diagram, the following holds.
    \[
    W_{\gl_N}^{\St}(J) = \begin{cases}
        N^{s(J)-2}\, \id_{\K^N} \otimes \id_{\K^N} & (  \text{if $J$ is of type II}),\\
        N^{s(J) -2}\, \sum_{i,j=1}^N e_{i,j} \otimes e_{j,i} & (\text{if $J$ is of type X}),
    \end{cases}
    \]
    where $s(J)$ denotes the number of connected components of the curve obtained by doubling all chords of $J$ as follows:
    \[
    \begin{tikzpicture}[scale=0.5, baseline={(0,0.8)}, densely dashed]
    \draw[->, solid] (0,3) -- (0,1);
    \draw[<-, solid] (2,3) -- (2,1);
    \draw (0,2) -- (2,2);
    \end{tikzpicture}
    \quad
    \leftrightsquigarrow
    \quad 
    \begin{tikzpicture}[scale=0.5, baseline={(0,0.8)}, densely dashed]
    \draw[solid, rounded corners=8pt] (0,3) -- (0,2.2) -- (2,2.2) -- (2,3);
    \draw[solid, rounded corners=8pt] (2,1) -- (2,1.8) -- (0, 1.8) -- (0,1);
    \end{tikzpicture}\ .
    \]
    Here, $J$ is said to be of type II if non-closed curves obtained by the doubling are II-shaped, and of type X if they are X-shaped. 
\end{proposition}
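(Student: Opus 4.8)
The plan is to run the standard Bar-Natan ``double line'' computation, specialised to $\A(\downarrow\downarrow)$. First I would record that, since $e_{i,j}^{\ast}=e_{j,i}$ with respect to $B_0$, the quadratic Casimir tensor is $\mathbf{1}_{\gl_N}=\sum_{i,j=1}^{N}e_{i,j}\otimes e_{j,i}$, and that in the standard representation $\St(e_{i,j})$ is the matrix unit, so the only algebraic input needed is the multiplication rule $\St(e_{a,b})\St(e_{c,d})=\delta_{bc}\,\St(e_{a,d})$. Using the dual-basis form of $\mathbf{1}_{\gl_N}$ in the definition of $W_{\gl_N}^{\St}$ from Section~\ref{subsec:Lie_alg_weight}, for a chord diagram $J$ one gets a state sum: assign to each chord a pair $(i,j)\in\{1,\dots,N\}^{2}$, place $\St(e_{i,j})$ at one foot and $\St(e_{j,i})$ at the other, read off the two ordered products of matrix units along the two intervals, tensor them, and sum over all assignments.

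Next I would collapse this state sum by the rule $\St(e_{a,b})\St(e_{c,d})=\delta_{bc}\,\St(e_{a,d})$: along each interval the ordered product becomes a single matrix unit times a product of Kronecker deltas, the deltas identifying the index of one strand segment with that of the adjacent segment, while the Casimir pair at each chord identifies an index on one strand with an index on the other. Hence an index label is constant along each connected component of the curve obtained by doubling all chords, and the unconstrained summation indices correspond bijectively to these components. Since the four endpoints of $\downarrow\downarrow$ are the only boundary points of this curve, exactly two of its $s(J)$ components are arcs and the remaining $s(J)-2$ are circles; each circle contributes an independent factor $\sum_{k=1}^{N}1=N$, producing the prefactor $N^{s(J)-2}$.

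It then remains to read off the two arcs, whose endpoint labels are precisely the matrix indices of the output in $\End(\K^{N})^{\otimes 2}$. If the two arcs run from the top endpoint to the bottom endpoint of each interval separately (type II), then on each interval the top and bottom index agree, so the corresponding factor is $\sum_{\alpha}e_{\alpha,\alpha}=\id_{\K^{N}}$ and the output is $\id_{\K^{N}}\otimes\id_{\K^{N}}$; otherwise (type X) the same bookkeeping forces the output to be $\sum_{i,j}e_{i,j}\otimes e_{j,i}$. Finally, a chord diagram on $\downarrow\downarrow$ is of exactly one of these two types, because the two arcs pair up the four endpoints and the output depends only on whether that pairing separates the two intervals.

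The conceptual content here is the classical 't Hooft/Bar-Natan argument; \emph{the main obstacle is the orientation bookkeeping} --- pinning down which index of $\St(e_{i,j})$ lies above and which below each chord foot on a downward strand, and confirming that, in the type X case, the doubling convention of the statement reproduces the Casimir tensor $\sum_{i,j}e_{i,j}\otimes e_{j,i}$ with the factors in the correct order (rather than, say, $\sum_{i,j}e_{i,j}\otimes e_{i,j}$), as well as checking exhaustiveness of the II/X dichotomy. A clean way to handle this is to verify the formula on a generating set under the stacking product $\cdot$, for which $W_{\gl_N}^{\St}$ is multiplicative and $s(\,\cdot\,)$ satisfies $s(J_1\cdot J_2)=s(J_1)+s(J_2)-2$: one checks the empty diagram ($s=2$, output $\id_{\K^{N}}\otimes\id_{\K^{N}}$), the single chord joining the two strands ($s=2$, output the flip $\sum_{i,j}e_{i,j}\otimes e_{j,i}$), and the single chord with both feet on one strand ($s=3$, output $N\,\id_{\K^{N}}\otimes\id_{\K^{N}}$), after which the general case follows by induction on the number of chords.
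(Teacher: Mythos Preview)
Your direct state-sum argument in the first three paragraphs is essentially the paper's proof: place the Casimir tensor at each chord, use $e_{i,j}e_{k,l}=\delta_{jk}e_{il}$ to force indices to be constant along each component of the doubled curve, and read off $N^{s(J)-2}$ times the contribution of the two arcs. The paper also records (in the remark immediately following the proposition) that the third conceivable pairing of the four endpoints---top-to-top and bottom-to-bottom---cannot arise for $\A(\downarrow\downarrow)$; this is the content of your exhaustiveness claim at the end of the third paragraph, and it follows from the index flow on two parallel downward strands.

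The alternative ``clean'' route in your last paragraph, however, has a genuine gap. Single-chord diagrams do \emph{not} generate all chord diagrams on $\downarrow\downarrow$ under the stacking product: in any product $J_1\cdot J_2$ every foot of $J_1$ lies above every foot of $J_2$ on each strand, so iterated products of single chords yield only diagrams whose chords are pairwise non-interleaving. The crossed double chord (two chords between the strands whose feet alternate along each strand) is therefore not reachable, and your induction never sees it. The multiplicativity of $W_{\gl_N}^{\St}$ and the additivity formula $s(J_1\cdot J_2)=s(J_1)+s(J_2)-2$ are both correct, but they only let you check the formula on the proper subalgebra generated by single chords. The orientation bookkeeping you hoped to sidestep is thus unavoidable---but, as the paper's proof shows, it is a routine index chase once the picture is drawn; simply carry your first argument through rather than retreating to the inductive one.
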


\begin{proof}
The proof is similar to that of \cite[Theorem~6.7]{CDM12}, in which the weight system $W_{\gl_N}^{\St}$ for $\A(S^1)$ is considered. Hence, we only give a sketch of it. By the definition of $W_{\gl_N}^{\St}$, we assign the quadratic Casimir tensor $\mathbf{1}_{\gl_N} = \sum_{i,j=1}^N e_{i,j} \otimes e_{i,j}^{\ast} = \sum_{i,j=1}^N e_{i,j} \otimes e_{j,i}$ to each edge of $J$, and then we multiply the matrices associated with endpoints of the edges according to the orientations of the intervals. In addition, the standard basis $\{e_{i,j}\}$ satisfies the condition $e_{i,j} e_{k,l} = \delta_{jk} e_{il}$, which imposes a strong constraint on the indices in the products of the matrices along the intervals. Therefore, we can express how $e_{i,j} \otimes e_{j,i}$ is assigned to an edge as follows:
\[
    \begin{tikzpicture}[scale=0.5, baseline={(0,0.8)}, densely dashed]
    \draw[->, solid] (0,3) -- (0,1);
    \draw[<-, solid] (2,3) -- (2,1);
    \draw (0,2) -- (2,2);
    \node at (-0.9, 2) {$e_{i,j}$};
     \node at (2.9, 2) {$e_{j,i}$};
    \end{tikzpicture}
    \quad
    \leftrightsquigarrow
    \quad 
    \begin{tikzpicture}[scale=0.5, baseline={(0,0.8)}, densely dashed]
    \draw[->, solid] (0,3) -- (0,1);
    \draw[<-, solid] (2,3) -- (2,1);
    \draw (0,2) -- (2,2);
    \node at (-0.5, 2.7) {$i$};
     \node at (2.5, 2.7) {$i$};
     \node at (-0.5, 1.3) {$j$};
     \node at (2.5, 1.3) {$j$};
    \end{tikzpicture}
    \quad
    \leftrightsquigarrow
    \quad 
    \begin{tikzpicture}[scale=0.5, baseline={(0,0.8)}, densely dashed]
    \draw[solid, rounded corners=8pt] (0,3) -- (0,2.2) -- (2,2.2) -- (2,3);
    \draw[solid, rounded corners=8pt] (2,1) -- (2,1.8) -- (0, 1.8) -- (0,1);
    \node at (-0.7, 2.8) {$i$};
    \node at (-0.7, 1.3) {$j$};
    \end{tikzpicture}\ .
    \]
    From this diagrammatic expression, one sees that, for each state on the set of the edges of $J$ (recall Section~\ref{subsec:Lie_alg_weight}), the products of the matrices along intervals yield an operator either of the form $e_{i,i} \otimes e_{j,j}$ or $e_{i,j} \otimes e_{j,i}$ corresponding to the following diagrams respectively:
    \[
    \begin{tikzpicture}[scale=0.5, baseline={(0,0.8)}, densely dashed]
    \draw[->, solid] (0,3) -- (0,1);
    \draw[<-, solid] (2,3) -- (2,1);
    \draw (0,2) -- (2,2);
    \node at (-0.5, 2.7) {$i$};
     \node at (2.5, 2.7) {$j$};
     \node at (-0.5, 1.3) {$i$};
     \node at (2.5, 1.3) {$j$};
    \end{tikzpicture}
    \quad
    \leftrightsquigarrow
    \quad 
    \begin{tikzpicture}[scale=0.5, baseline={(0,0.8)}, densely dashed]
    \draw[solid] (0,3) -- (0,1);
    \draw[solid] (2,3) -- (2,1);
    \node at (-0.7, 2) {$i$};
    \node at (2.5, 2) {$j$};
    \end{tikzpicture}\ ,
    \quad 
    \begin{tikzpicture}[scale=0.5, baseline={(0,0.8)}, densely dashed]
    \draw[->, solid] (0,3) -- (0,1);
    \draw[<-, solid] (2,3) -- (2,1);
    \draw (0,2) -- (2,2);
    \node at (-0.5, 2.7) {$i$};
     \node at (2.5, 2.7) {$j$};
     \node at (-0.5, 1.3) {$j$};
     \node at (2.5, 1.3) {$i$};
    \end{tikzpicture}
    \quad
    \leftrightsquigarrow
    \quad 
    \begin{tikzpicture}[scale=0.5, baseline={(0,0.8)}, densely dashed]
    \draw[solid, rounded corners=8pt] (2,1) -- (0,3);
    \draw[solid, rounded corners=8pt] (0,1) -- (2,3);
     \node at (-0.7, 2.8) {$i$};
    \node at (-0.7, 1.3) {$j$};
    \end{tikzpicture}\ .
    \]
    To get $W_{\gl_N}^{\St}(J)$, we have to take the sum of all such operators over all possible indices.
    The number of indices corresponds to the number of values assigned to the indices $i,j,k, \ldots$ on the connected components of the curve obtained by the resolution procedure described above. Since each connected component admits exactly $N$ choices, excluding the sum corresponding to the two intervals, the total number is $N^{s(J)-2}$. Computing the sum corresponding to the two intervals, we obtain an element in either of the following two forms:
    \[
    \sum_{i,j} e_{i,i} \otimes e_{j,j} = \id_{\K^N} \otimes \id_{\K^N}, \quad \sum_{i,j} e_{i,j} \otimes e_{j,i} = \mathbf{1}_{\gl_N}.
    \]
    Thus, the assertion has been proved.
\end{proof}

\begin{figure}[h]
    \begin{tikzpicture}[scale=0.5, baseline={(0,0.8)}, densely dashed]
    \draw[solid] (0,3) -- (0,1);
    \draw[solid] (2,3) -- (2,1);
    \begin{scope}[xshift=5cm]
    \draw[solid, rounded corners=8pt] (2,1) -- (0,3);
    \draw[solid, rounded corners=8pt] (0,1) -- (2,3);
    \end{scope}
    \begin{scope}[xshift=10cm]
     \draw[solid, rounded corners=8pt] (0,3) -- (0,2.2) -- (2,2.2) -- (2,3);
    \draw[solid, rounded corners=8pt] (2,1) -- (2,1.8) -- (0, 1.8) -- (0,1);
    \end{scope}
    \end{tikzpicture}
    \caption{Three possible non-closed curves in the resolution of a Jacobi diagram in $\A(\downarrow \downarrow)$.}
    \label{fig:result_of_resolutions}
\end{figure}
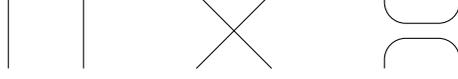

\begin{remark}
    The $\Ugraph$-shaped curve depicted in Figure~\ref{fig:result_of_resolutions} is not obtained as the result of the resolutions described in Proposition~\ref{prop:w_sys_glN}.
    However, when we consider $\A(\downarrow\uparrow)$ or $\A(\uparrow\downarrow)$, in contrast to the previous case, the $X$-shaped curve does not appear, while the $\Ugraph$-shaped one does.
\end{remark}

\begin{corollary}\label{cor:w_sys_im_gl_N}
    The image $\Im W_{\gl_N}^{\St} \subset \End(\K^N)^{\otimes 2}$ is commutative.
\end{corollary}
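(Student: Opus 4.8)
The plan is to reduce everything to Proposition~\ref{prop:w_sys_glN}. Since $W_{\gl_N}^{\St}\colon \A(\downarrow\downarrow)_\K \to \End(\K^N)^{\otimes 2}$ is an algebra homomorphism, its image is the $\K$-subalgebra generated by the values $W_{\gl_N}^{\St}(J)$ as $J$ ranges over Jacobi diagrams based on $\downarrow\downarrow$. First I would observe that, by repeatedly applying the STU relation to resolve trivalent vertices, $\A(\downarrow\downarrow)$ is spanned by chord diagrams (diagrams with no trivalent vertices); hence $\Im W_{\gl_N}^{\St}$ is already spanned, as a $\K$-vector space, by the elements $W_{\gl_N}^{\St}(J)$ with $J$ a chord diagram.

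Next I would invoke Proposition~\ref{prop:w_sys_glN}: for every chord diagram $J$, the element $W_{\gl_N}^{\St}(J)$ equals $N^{s(J)-2}\,\id_{\K^N}\otimes\id_{\K^N}$ when $J$ is of type II, and $N^{s(J)-2}\,P$ when $J$ is of type X, where $P \coloneqq \sum_{i,j=1}^{N} e_{i,j}\otimes e_{j,i} = \mathbf{1}_{\gl_N}$ is the transposition operator on $\K^N\otimes\K^N$. Therefore
\[
\Im W_{\gl_N}^{\St} \subseteq \K\cdot(\id_{\K^N}\otimes\id_{\K^N}) \;+\; \K\cdot P,
\]
a subspace of dimension at most $2$.

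Finally I would check that this small space is a commutative subalgebra of $\End(\K^N)^{\otimes 2}$: the element $\id_{\K^N}\otimes\id_{\K^N}$ is the unit and is central, $P$ trivially commutes with itself, and a one-line computation with matrix units gives $P^2 = \sum_{i,j,k,l} e_{i,j}e_{k,l}\otimes e_{j,i}e_{l,k} = \sum_{i,j} e_{i,i}\otimes e_{j,j} = \id_{\K^N}\otimes\id_{\K^N}$, so the span is closed under multiplication and commutative. Since $\Im W_{\gl_N}^{\St}$ is contained in it, commutativity of the image follows. I do not expect any serious obstacle here; the only mildly delicate point is the first step (the reduction to chord diagrams), which is the standard consequence of the STU relation, combined with the elementary identity $P^2 = \id_{\K^N}\otimes\id_{\K^N}$.
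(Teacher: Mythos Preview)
Your proposal is correct and follows essentially the same approach as the paper: reduce to chord diagrams, invoke Proposition~\ref{prop:w_sys_glN} to see that every value lies in the span of $\id_{\K^N}\otimes\id_{\K^N}$ and $P=\sum_{i,j} e_{i,j}\otimes e_{j,i}$, and conclude commutativity. The paper is slightly terser (it identifies the two generators as $W_{\gl_N}^{\St}(\downarrow\downarrow)$ and $W_{\gl_N}^{\St}(t_{12})$ and omits the explicit check $P^2=\id$, which it relegates to a later remark), but the substance is the same.
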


\begin{proof}
    By applying $W_{\gl_N}^{\St}$ to $\downarrow \downarrow$ and $t_{12} \in U(\mathfrak{t}_2)_\K \subset \A(\downarrow\downarrow)_\K$,
    one obtains $W_{\gl_N}^{\St}(\downarrow \downarrow)= \id_{\K^N} \otimes \id_{\K^N}$ and $W_{\gl_N}^{\St}(t_{12}) = \sum e_{i,j} \otimes e_{j,i}$, respectively. Therefore, by Proposition~\ref{prop:w_sys_glN}, $\Im W_{\gl_N}^{\St}$ is spanned by $W_{\gl_N}^{\St}(\downarrow \downarrow)$ and $W_{\gl_N}^{\St}(t_{12})$ and so $\Im W_{\gl_N}^{\St}$ forms a commutative subalgebra of $\End(\K^N)^{\otimes 2}$.
\end{proof}

Next, we consider the weight system associated with $\sl_N=\sl_N(\K)$ and the standard representation $\St \colon \sl_N \rightarrow \End(\K^N)$. As in \cite[Section~6.1.7]{CDM12}, we recall the notion of states for chord diagrams. For any Jacobi diagram $J \in \A(\downarrow \downarrow)_\K$, we take a representative of $J$ as a chord diagram. Then, a state $\sigma = \sigma_{\sl_N}^{\St}$ associated with $\sl_N$ and $\St$ is a map $\sigma\colon E_J \rightarrow \{1, -\frac{1}{N}\}$, where $E_J$ denotes the set of chords of (the chosen representative of) $J$.
For each state $\sigma$, we obtain an immersed curve from $J$ by applying the following resolution procedure to all its chords:
\[
    \begin{tikzpicture}[scale=0.5, baseline={(0,0.8)}, densely dashed]
    \draw[->, solid] (0,3) -- (0,1);
    \draw[<-, solid] (2,3) -- (2,1);
    \draw (0,2) -- (2,2);
    \node at (1, 2.5) {$a$};
    \end{tikzpicture}
    \quad
    \leftrightsquigarrow
    \quad 
    \begin{tikzpicture}[scale=0.5, baseline={(0,0.8)}, densely dashed]
    \draw[solid, rounded corners=8pt] (0,3) -- (0,2.2) -- (2,2.2) -- (2,3);
    \draw[solid, rounded corners=8pt] (2,1) -- (2,1.8) -- (0, 1.8) -- (0,1);
    \end{tikzpicture}\ , \text{if $\sigma(a)=1$;}
    \quad 
    \begin{tikzpicture}[scale=0.5, baseline={(0,0.8)}, densely dashed]
    \draw[->, solid] (0,3) -- (0,1);
    \draw[<-, solid] (2,3) -- (2,1);
    \draw (0,2) -- (2,2);
    \node at (1, 2.5) {$a$};
    \end{tikzpicture}
    \quad
    \leftrightsquigarrow
    \quad 
    \begin{tikzpicture}[scale=0.5, baseline={(0,0.8)}, densely dashed]
    \draw[solid, rounded corners=8pt] (0,1) -- (0,3);
    \draw[solid, rounded corners=8pt] (2,1) -- (2,3);
    \end{tikzpicture}\ , \text{if $\sigma(a)=-\frac{1}{N}$.}
    \]

Let $|\sigma|$ denote the number of connected components of the resulting curve. Then, the weight system associated with $\sl_N$ and $\St$ can be computed diagrammatically as follows. 

\begin{proposition}
Let $B_0$ be the trace form on $\sl_N$ and $\St\colon \sl_N \to \End(\K^{N})$ the standard representation of $\sl_N$. Let $W_{\sl_N}^{\St}\colon \A(\downarrow\downarrow)_\K \to \End(\K^N)^{\otimes 2}$ denote the weight system associated with the triple $(\sl_N, B_0, \St)$.
Then, for any Jacobi diagram $J \in \A(\downarrow\downarrow)_\K$ regarded as a chord diagram, we have
    \[
    W_{\sl_N}^{\St}(J) = 
        \sum_{\sigma} \left(\prod_a \sigma(a) \right)N^{|\sigma|-2} W_{J_{\sigma}},
    \]
    where the product runs over all chords of $J$, the sum runs over all states $\sigma=\sigma_{\sl_N}^{\St}$ for $J$. Here we define
    \[
    W_{J_{\sigma}} = \begin{cases}
        \id_{\K^N} \otimes \id_{\K^N}  & (\text{if $J_{\sigma}$ is of type II}),\\
        \sum_{i,j=1}^N e_{i,j} \otimes e_{j,i} & (\text{if $J_{\sigma}$ is of type X}),
    \end{cases}
    \]
    and $J_{\sigma}$ denotes the connected components of the curve obtained by the resolution procedure associated with $\sigma$ which are non-closed curves.
\end{proposition}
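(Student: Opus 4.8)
The plan is to reduce the computation to the $\gl_N$ case treated in Proposition~\ref{prop:w_sys_glN} by invoking the standard decomposition of the Casimir tensor. Since $\sl_N$ is the orthogonal complement, with respect to the trace form $B_0$, of the line $\K\cdot I$ of scalar matrices inside $\gl_N$, and since $\langle I,I\rangle = \tr(I^2)=N$, a direct computation gives
\[
\mathbf{1}_{\gl_N} \;=\; \mathbf{1}_{\sl_N} \,+\, \tfrac{1}{N}\,\id_{\K^N}\otimes\id_{\K^N}
\qquad\text{in }\gl_N\otimes\gl_N,
\]
equivalently $\mathbf{1}_{\sl_N} = \sum_{i,j} e_{i,j}\otimes e_{j,i} - \tfrac1N\,\id_{\K^N}\otimes\id_{\K^N}$. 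First I would verify this identity (routine: expand $\mathbf{1}_{\gl_N}$ along $\gl_N=\sl_N\oplus\K\cdot I$) and record that, in the state-sum defining $W_{\sl_N}^{\St}$ applied to a chord diagram $J$, each chord contributes $\mathbf{1}_{\sl_N}$ in place of $\mathbf{1}_{\gl_N}$.

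Next I would expand the resulting product over the chord set $E_J$ multilinearly, using the two-term expression for $\mathbf{1}_{\sl_N}$. Choosing the summand $\mathbf{1}_{\gl_N}$ (coefficient $1$) at a chord $a$ corresponds to $\sigma(a)=1$, and choosing the summand $-\tfrac1N\,\id\otimes\id$ corresponds to $\sigma(a)=-\tfrac1N$; thus the expansion is exactly a sum over all states $\sigma=\sigma_{\sl_N}^{\St}$, the scalar coefficient of the term attached to $\sigma$ being $\prod_a \sigma(a)$. In that term the chords with $\sigma(a)=1$ carry $\sum_{i,j}e_{i,j}\otimes e_{j,i}$, i.e.\ the crossing-type resolution used in Proposition~\ref{prop:w_sys_glN}, while the chords with $\sigma(a)=-\tfrac1N$ carry $\id\otimes\id$ on their two strands, which is precisely the parallel resolution in the two pictures describing $\sigma$.

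Then I would run the diagrammatic bookkeeping of Proposition~\ref{prop:w_sys_glN} on each term: multiplying the matrices $e_{i,j}$ along the two intervals according to their orientations, the relation $e_{i,j}e_{k,l}=\delta_{jk}e_{i,l}$ forces all indices along a connected component of the curve $J_\sigma$ obtained by performing the resolutions to coincide. Summing over the index on each of the $|\sigma|$ components, the two non-closed components together contribute the boundary operator $W_{J_\sigma}$ (equal to $\id\otimes\id$ if they are II-shaped and to $\sum_{i,j}e_{i,j}\otimes e_{j,i}$ if they are X-shaped), while each of the remaining $|\sigma|-2$ closed loops contributes a factor $N$; this yields $\big(\prod_a \sigma(a)\big)\,N^{|\sigma|-2}\,W_{J_\sigma}$. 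Summing over all states $\sigma$ gives the claimed formula, and one also notes, as in Figure~\ref{fig:result_of_resolutions} with the $\Ugraph$-shaped possibility excluded for $\A(\downarrow\downarrow)$, that exactly two non-closed arcs survive, so $W_{J_\sigma}$ is well defined.

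I expect the only genuine subtlety to be this last step: checking that after cutting the $\sigma(a)=-\tfrac1N$ chords and resolving the $\sigma(a)=1$ chords the remaining picture is again of the form handled in Proposition~\ref{prop:w_sys_glN} — in particular that precisely two non-closed arcs remain and that they are necessarily of the same type (both II or both X) — and keeping the count of closed loops consistent with the definition of $|\sigma|$. The Casimir identity and the multilinear expansion are routine; the component bookkeeping is where care is needed.
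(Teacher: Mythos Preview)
Your approach is correct and is essentially the same as the paper's: the paper also reduces everything to the Casimir identity $\mathbf{1}_{\sl_N}=\mathbf{1}_{\gl_N}-\tfrac{1}{N}\,\id_{\K^N}\otimes\id_{\K^N}$ and then says that the argument of Proposition~\ref{prop:w_sys_glN} applies verbatim, with the two summands at each chord giving the two resolutions indexed by $\sigma(a)\in\{1,-\tfrac1N\}$. Your write-up is in fact more explicit than the paper's about the multilinear expansion and the component bookkeeping, and the paper spends its remaining lines verifying the Casimir identity via the basis $\{e_{i,j}\ (i\neq j),\,h_k=e_{k,k}-e_{k+1,k+1}\}$, which you may cite or reproduce.
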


\begin{proof}
    It suffices to show the identity
    \begin{align*}
    \mathbf{1}_{\sl_N} &= \sum_{i,j=1}^N e_{i,j} \otimes e_{j,i} - \frac{1}{N} \id_{\K^N} \otimes \id_{\K^N}\\
    &= \mathbf{1}_{\gl_N}  - \frac{1}{N} \id_{\K^N} \otimes \id_{\K^N} \in \End(\K^N)^{\otimes 2}
    \end{align*}
    concerning the quadratic Casimir tensor with respect to $B_0$ because we can apply the same argument as Proposition~\ref{prop:w_sys_glN} once we grant it. 
    Although it may be known, we give a sketch of the proof to the above identity for the convenience to the reader. We take the basis of $\sl_N$ consisting of the matrices $e_{i,j}$ for $i \neq j$ and the matrices $h_i\coloneqq e_{i,i} - e_{i+1, i+1}$ $(i=1,2,\ldots, N-1)$ as in \cite[Exercise in Section~6]{CDM12}. We set
    \begin{align*}
    e_k  & \coloneqq \left(h_1 + 2 h_2 + \cdots + k h_k \right) \\
        & = \left(e_{1,1} + e_{2,2} + \cdots + e_{k,k} - k e_{k+1, k+1} \right)
    \end{align*}
    for $k=1,2,\ldots, N-1$. Then, we have $B_0( e_k, e_l) = k(1+k)\delta_{kl}$ for $k, l$ and $B_0( e_{i,j}, e_k ) =0$ for any $i\neq j$ and $k$. Therefore, the dual $e_k^{\ast}$ to $e_k$ with respect to $B_0$ is given by $e_k^{\ast}=\frac{1}{k(1+k)} e_k$. Notice that $e_{i,j}$ $(i\neq j)$ and $e_k$ form a basis of $\sl_N$. Therefore, the Casimir tensor can be written as
    \[
     \mathbf{1}_{\sl_N} = \sum_{i \neq j=1}^N e_{i,j} \otimes e_{j,i} + \sum_{k=1}^{N-1} e_k \otimes e_k^{\ast}.
    \]
    Then, the proof will be completed if we show that
    \[
    \sum_{k=1}^{N-1} e_k \otimes e_k^{\ast} = \sum_{i=1}^N e_{i,i} \otimes e_{i,i}  - \frac{1}{N} \id_{\K^N} \otimes \id_{\K^N},
    \]
    which can be checked by direct computation.
\end{proof}

\begin{corollary}\label{cor:w_sys_im_sl_N}
    The image $\Im W_{\sl_N}^{\St} \subset \End(\K^N)^{\otimes 2}$ is commutative.
\end{corollary}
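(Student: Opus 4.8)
The plan is to proceed exactly as in the proof of Corollary~\ref{cor:w_sys_im_gl_N}, using the diagrammatic formula for $W_{\sl_N}^{\St}$ just established. First I would observe that this formula expresses $W_{\sl_N}^{\St}(J)$, for \emph{every} Jacobi diagram $J\in\A(\downarrow\downarrow)_\K$, as a $\K$-linear combination of the two operators $W_{J_\sigma}$ that can possibly occur, namely $\id_{\K^N}\otimes\id_{\K^N}$ (when $J_\sigma$ is of type~II) and $\mathbf{1}_{\gl_N}=\sum_{i,j=1}^{N}e_{i,j}\otimes e_{j,i}$ (when $J_\sigma$ is of type~X); recall that for $\A(\downarrow\downarrow)$ no $\Ugraph$-shaped component arises, so these are the only two cases. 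Consequently $\Im W_{\sl_N}^{\St}$ is contained in the $\K$-span $V$ of $\{\id_{\K^N}\otimes\id_{\K^N},\ \mathbf{1}_{\gl_N}\}$ inside $\End(\K^N)^{\otimes 2}$.

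It then remains to check that $V$ is a commutative subalgebra, which is immediate. The element $\id_{\K^N}\otimes\id_{\K^N}$ is the unit of $\End(\K^N)^{\otimes 2}$, hence central; and a short computation from $e_{i,j}e_{k,l}=\delta_{jk}e_{i,l}$ yields $\mathbf{1}_{\gl_N}^{2}=\bigl(\sum_i e_{i,i}\bigr)\otimes\bigl(\sum_j e_{j,j}\bigr)=\id_{\K^N}\otimes\id_{\K^N}$, so $V$ is closed under multiplication and any two of its elements commute. Therefore $\Im W_{\sl_N}^{\St}\subset V$ is commutative, as claimed.

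I do not anticipate any genuine difficulty here: all the substantive work has gone into the preceding proposition, and this corollary is a formal consequence. The only point that needs attention is to make sure that the classification of the non-closed resolution curves (type~II / type~X) is complete for $\A(\downarrow\downarrow)$, so that no third building block sneaks into $\Im W_{\sl_N}^{\St}$; but this is exactly what Figure~\ref{fig:result_of_resolutions} together with the accompanying remark records. An alternative route would be to mimic Corollary~\ref{cor:w_sys_im_gl_N} even more closely by noting $W_{\sl_N}^{\St}(\downarrow\downarrow)=\id_{\K^N}\otimes\id_{\K^N}$ and $W_{\sl_N}^{\St}(t_{12})=\mathbf{1}_{\gl_N}-\tfrac1N\,\id_{\K^N}\otimes\id_{\K^N}$, but this still rests on the proposition to know that the image is at most two-dimensional.
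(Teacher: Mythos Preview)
Your proposal is correct and follows essentially the same approach as the paper: both observe that the preceding proposition forces $\Im W_{\sl_N}^{\St}$ to lie in the $\K$-span of $\id_{\K^N}\otimes\id_{\K^N}$ and $\sum_{i,j} e_{i,j}\otimes e_{j,i}$, which is a commutative subalgebra. The paper's one-line proof simply states that $\Im W_{\sl_N}^{\St}$ is spanned by $W_{\gl_N}^{\St}(\downarrow\downarrow)$ and $W_{\gl_N}^{\St}(t_{12})$ as in the $\gl_N$ case; your version just makes this explicit.
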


\begin{proof}
    As in the case of $\Im W_{\gl_N}^{\St}$,  the image $\Im W_{\sl_N}^{\St}$ is also spanned by $W_{\gl_N}^{\St}(\downarrow \downarrow)$ and $W_{\gl_N}^{\St}(t_{12})$. Thus, the assertion follows.
\end{proof}

\begin{remark}
    Since $W_{\gl_N}^{\St}(t_{12})^2 = \id_{\K^N} \otimes \id_{\K^N}$, the images $\Im W_{\gl_N}^{\St}$ and $\Im W_{\sl_N}^{\St}$ coincide with  the subalgebra of $\End(\K^N)^{\otimes 2}$ isomorphic to the group ring $\K[\mathfrak{S}_2]$ of the symmetric group $\mathfrak{S}_2$ of order $2$.
\end{remark}

We then consider the weight system associated with $\so_N=\so_N(\K)$ and the standard representation $\St\colon \so_N \rightarrow \End(\K^N)$. As in the case of $\sl_N$, for any $J \in \A(\downarrow \downarrow)_\K$ as a chord diagram, we define a state $\sigma=\sigma_{\so_N}^{\St}$ as a map $\sigma \colon E_J \rightarrow \{\frac{1}{2}, -\frac{1}{2}\}$ and the associated resolution of $J$ is defined as follows:
\[
    \begin{tikzpicture}[scale=0.5, baseline={(0,0.8)}, densely dashed]
    \draw[->, solid] (0,3) -- (0,1);
    \draw[<-, solid] (2,3) -- (2,1);
    \draw (0,2) -- (2,2);
    \node at (1, 2.5) {$a$};
    \end{tikzpicture}
    \quad
    \leftrightsquigarrow
    \quad 
    \begin{tikzpicture}[scale=0.5, baseline={(0,0.8)}, densely dashed]
    \draw[solid, rounded corners=8pt] (0,3) -- (0,2.2) -- (2,2.2) -- (2,3);
    \draw[solid, rounded corners=8pt] (2,1) -- (2,1.8) -- (0, 1.8) -- (0,1);
    \end{tikzpicture}\ , \text{if $\sigma(a)=\frac{1}{2}$;}
    \quad 
    \begin{tikzpicture}[scale=0.5, baseline={(0,0.8)}, densely dashed]
    \draw[->, solid] (0,3) -- (0,1);
    \draw[<-, solid] (2,3) -- (2,1);
    \draw (0,2) -- (2,2);
    \node at (1, 2.5) {$a$};
    \end{tikzpicture}
    \quad
    \leftrightsquigarrow
    \quad 
    \begin{tikzpicture}[scale=0.5, baseline={(0,0.8)}, densely dashed]
    \draw[solid, rounded corners=8pt] (0,3) -- (0,2.5) -- (2,1.5) -- (2,1);
    \draw[solid, rounded corners=8pt] (2,3) -- (2,2.5) -- (0,1.5) -- (0,1);
    \end{tikzpicture}\ , \text{if $\sigma(a)=-\frac{1}{2}$.}
    \]

As in the case of $\sl_N$, we write $|\sigma|$ for the number of connected components of the curve obtained in this way.
Then, the diagrammatic computation of the weight system associated with $\so_N$ and $\St$ is described as follows.

\begin{proposition}\label{prop:w_sys_soN}
Let $B_0$ be the trace form on $\so_N$ and $\St\colon \so_N \to \End(\K^{N})$  the standard representation of $\so_N$. 
Let $W_{\so_N}^{\St}\colon \A(\downarrow\downarrow)_\K\to \End(\K^N)^{\otimes 2}$ denote the weight system associated with the triple $(\so_N, B_0, \St)$.
Then, for any Jacobi diagram $J \in \A(\downarrow\downarrow)_\K$ considered as a chord diagram, we have
    \[
    W_{\so_N}^{\St}(J) = 
        \sum_{\sigma} \left(\prod_a \sigma(a) \right)N^{|\sigma|-2} W_{J_{\sigma}},
    \]
    where the product runs over all chords of $J$, the sum runs over all states $\sigma=\sigma_{\so_N}^{\St}$ for  $J$,  we set
    \[
    W_{J_{\sigma}} = \begin{cases}
        \id_{\K^N} \otimes \id_{\K^N}  & (\text{if $J_{\sigma}$ is of type II}),\\
        \sum_{i,j=1}^N e_{i,j} \otimes e_{j,i} & (\text{if $J_{\sigma}$ is of type X}),\\
        \sum_{i,j=1}^N e_{i,j} \otimes e_{i,j} & (\text{otherwise}),
    \end{cases}
    \]
    and $J_{\sigma}$ denotes the connected components of the curve obtained by the resolution procedure associated with $\sigma$ which are non-closed curves.
    
\end{proposition}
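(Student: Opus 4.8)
The plan is to run the argument of Proposition~\ref{prop:w_sys_glN} almost verbatim, so that the only genuinely new ingredient is the shape of the quadratic Casimir tensor $\mathbf{1}_{\so_N}$ attached to a chord. First I would fix the basis $F_{ij}\coloneqq e_{i,j}-e_{j,i}$ $(1\le i<j\le N)$ of $\so_N$ and compute, using $\tr(e_{a,b}e_{c,d})=\delta_{bc}\delta_{ad}$, that $B_0(F_{ij},F_{kl})=2(\delta_{jk}\delta_{il}-\delta_{ik}\delta_{jl})$, which for $i<j$ and $k<l$ equals $-2\delta_{ik}\delta_{jl}$. Thus this basis is $B_0$-orthogonal with $F_{ij}^{\ast}=-\tfrac12 F_{ij}$, and hence
\begin{align*}
\mathbf{1}_{\so_N} &= \sum_{i<j} F_{ij}\otimes F_{ij}^{\ast} = -\tfrac12\sum_{i<j}(e_{i,j}-e_{j,i})\otimes(e_{i,j}-e_{j,i})\\
&= \tfrac12\Bigl(\sum_{i,j=1}^N e_{i,j}\otimes e_{j,i}-\sum_{i,j=1}^N e_{i,j}\otimes e_{i,j}\Bigr),
\end{align*}
the diagonal terms cancelling between the two sums in the last equality.

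Next I would substitute this into the definition of $W_{\so_N}^{\St}$: assign $\mathbf{1}_{\so_N}$ to each chord of $J$ and multiply the resulting elementary matrices along the two intervals following their orientations. Expanding the two-term expression for $\mathbf{1}_{\so_N}$ over the set $E_J$ of chords turns this into a sum over states $\sigma\colon E_J\to\{\tfrac12,-\tfrac12\}$: a chord $a$ with $\sigma(a)=\tfrac12$ is assigned $\tfrac12\sum_{i,j}e_{i,j}\otimes e_{j,i}$ and a chord with $\sigma(a)=-\tfrac12$ is assigned $-\tfrac12\sum_{i,j}e_{i,j}\otimes e_{i,j}$, so the scalar collected from a given state is exactly $\prod_a\sigma(a)$. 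The first summand is the resolution already used in the proof of Proposition~\ref{prop:w_sys_glN} (the $\sigma(a)=\tfrac12$ picture), and the second is the crossing resolution (the $\sigma(a)=-\tfrac12$ picture). Collapsing the products along the intervals by $e_{i,j}e_{k,l}=\delta_{jk}e_{i,l}$ exactly as in the $\gl_N$ case, each state $\sigma$ produces a compact $1$-manifold whose boundary consists of the four endpoints of the two intervals; every closed component forces one free index summed over $\{1,\dots,N\}$ and hence a factor $N$, and since a compact $1$-manifold with four boundary points has precisely two arc components, the resulting power is $N^{|\sigma|-2}$.

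Finally I would identify the contribution of the two arc components: there are only three ways they can pair the four endpoints -- left-to-left together with right-to-right, the crossed pairing, or top-to-top together with bottom-to-bottom -- and tracking indices along the intervals identifies the corresponding operator with $\id_{\K^N}\otimes\id_{\K^N}$, with $\sum_{i,j}e_{i,j}\otimes e_{j,i}$, or with $\sum_{i,j}e_{i,j}\otimes e_{i,j}$, which is exactly $W_{J_\sigma}$ in the three cases of the statement. The first two identifications are literally those from the proof of Proposition~\ref{prop:w_sys_glN}, and the third is the same computation; note that the $\Ugraph$-shaped pair, which the remark following Proposition~\ref{prop:w_sys_glN} excluded for $\A(\downarrow\downarrow)$, now genuinely occurs, because the $\sigma(a)=-\tfrac12$ resolution behaves like the chord resolution for oppositely oriented strands. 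Assembling the weight $\prod_a\sigma(a)$, the power $N^{|\sigma|-2}$, and the factor $W_{J_\sigma}$, and summing over all states $\sigma$, gives the asserted formula. I expect the only real obstacle to be this last bookkeeping step -- matching the pair of arc components to the three operators and keeping the exponents of $N$ correct -- but since Proposition~\ref{prop:w_sys_glN} already performs the essential diagrammatic analysis, this is routine, and the substantive new content is the Casimir computation above.
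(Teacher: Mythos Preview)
Your proposal is correct and follows essentially the same approach as the paper: compute the quadratic Casimir tensor $\mathbf{1}_{\so_N}=\tfrac12\sum_{i,j}e_{i,j}\otimes e_{j,i}-\tfrac12\sum_{i,j}e_{i,j}\otimes e_{i,j}$ using the basis $e_{i,j}-e_{j,i}$ $(i<j)$, and then invoke the diagrammatic argument of Proposition~\ref{prop:w_sys_glN}. The paper's proof is in fact terser than yours---it records only the Casimir computation and then says ``the assertion follows from the similar argument as Proposition~\ref{prop:w_sys_glN}''---so your additional bookkeeping (the state-sum expansion, the count $N^{|\sigma|-2}$, and the identification of the three arc pairings with the three operators) simply spells out what the paper leaves implicit.
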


\begin{proof}
By taking the basis of $\so_N$ consists of $e_{i,j} - e_{j,i}$ $(i<j)$, the quadratic Casimir tensor with respect to trace form $B_0$ can be expressed as 
    \begin{align*}
     \mathbf{1}_{\so_N} & = \sum_{i <j} \frac{1}{2}(e_{i,j}-e_{j,i}) \otimes (e_{j,i} - e_{i,j})\\
    &= \frac{1}{2} \sum_{i,j=1}^N e_{i,j} \otimes e_{j,i} - \frac{1}{2} \sum_{i,j=1}^N e_{i,j} \otimes e_{i,j}.
    \end{align*}
    Using this quadratic Casimir tensor, the assertion follows from the similar argument as Proposition~\ref{prop:w_sys_glN}.
\end{proof}

\begin{corollary}\label{cor:w_sys_im_so_N}
    The image $\Im W_{\so_N}^{\St} \subset \End(\K^N)^{\otimes 2}$ is commutative.
\end{corollary}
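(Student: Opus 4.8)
The plan is to mimic the proofs of Corollaries~\ref{cor:w_sys_im_gl_N} and \ref{cor:w_sys_im_sl_N}, the only new ingredient being the $\Ugraph$-shaped non-closed curves that can now arise in a resolution. Set
\[
P \coloneqq \id_{\K^N}\otimes\id_{\K^N}, \qquad T \coloneqq \sum_{i,j=1}^N e_{i,j}\otimes e_{j,i}, \qquad Q \coloneqq \sum_{i,j=1}^N e_{i,j}\otimes e_{i,j}
\]
in $\End(\K^N)^{\otimes 2}$. By Proposition~\ref{prop:w_sys_soN}, for every Jacobi diagram $J\in\A(\downarrow\downarrow)_\K$ the element $W_{\so_N}^{\St}(J)$ is a $\K$-linear combination of the operators $N^{|\sigma|-2}W_{J_\sigma}$ over states $\sigma$, and each $W_{J_\sigma}$ equals one of $P$, $T$, $Q$ according to whether the pair of non-closed curves in the resolution is II-, X-, or $\Ugraph$-shaped (these are the only three possibilities, since the resolution always leaves exactly two non-closed arcs on the four boundary points). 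Hence $\Im W_{\so_N}^{\St}\subseteq \mathrm{span}_{\K}\{P,T,Q\}$, and it suffices to check that $P$, $T$, $Q$ commute pairwise.

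The operator $P$ is the identity of $\End(\K^N)^{\otimes 2}$, hence central, so the only thing to verify is $TQ=QT$. This is a one-line computation on basis vectors: $T$ is the flip $v\otimes w\mapsto w\otimes v$ and $Q(e_k\otimes e_l)=\delta_{kl}\sum_{i=1}^N e_i\otimes e_i$, so both $TQ$ and $QT$ send $e_k\otimes e_l$ to $\delta_{kl}\sum_{i=1}^N e_i\otimes e_i$; that is, $TQ=QT=Q$. For completeness one also has $T^2=P$ and $Q^2=NQ$, so $\mathrm{span}_{\K}\{P,T,Q\}$ is in fact a commutative subalgebra of $\End(\K^N)^{\otimes 2}$. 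Consequently $\Im W_{\so_N}^{\St}$ is commutative, which is the assertion.

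There is essentially no obstacle here: the argument is routine once Proposition~\ref{prop:w_sys_soN} is in place, and the single piece of content is the commutation $TQ=QT$ (equivalently, the multiplication table of $P$, $T$, $Q$). If desired, one can also identify the image precisely: $P=W_{\so_N}^{\St}(\downarrow\downarrow)$ and $W_{\so_N}^{\St}(t_{12})=\tfrac12(Q-T)$, while $W_{\so_N}^{\St}(t_{12}^2)=W_{\so_N}^{\St}(t_{12})^2=\tfrac14 P+\tfrac{N-2}{4}Q$; for $N\geq 3$ and $\mathrm{char}\,\K=0$ this recovers $Q$ and hence $T$, so $\Im W_{\so_N}^{\St}=\mathrm{span}_{\K}\{P,T,Q\}$ in that range (while $\so_N$ is abelian for $N\leq 2$, making the statement trivial there).
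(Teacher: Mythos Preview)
Your proof is correct and follows essentially the same route as the paper: invoke Proposition~\ref{prop:w_sys_soN} to see that $\Im W_{\so_N}^{\St}\subseteq\mathrm{span}_\K\{P,T,Q\}$, then check that $T$ and $Q$ commute (with $P$ the identity). One small slip in your optional final paragraph: from the Casimir $\mathbf{1}_{\so_N}=\tfrac12 T-\tfrac12 Q$ one has $W_{\so_N}^{\St}(t_{12})=\tfrac12(T-Q)$ rather than $\tfrac12(Q-T)$; this does not affect the main argument, and your computation of $W_{\so_N}^{\St}(t_{12})^2$ is unchanged since only the square enters.
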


\begin{proof}
    By Proposition~\ref{prop:w_sys_soN}, the image $\Im W_{\so_N}^{\St}$ is spanned by $\id_{\K^N} \otimes \id_{\K^N}$, $\sum_{i,j=1}^N e_{i,j} \otimes e_{j,i}$ and $\sum_{i,j=1}^N e_{i,j} \otimes e_{i,j}$.  Since $\sum_{i,j=1}^N e_{i,j} \otimes e_{j,i}$ and $\sum_{i,j=1}^N e_{i,j} \otimes e_{i,j}$ commutes, we conclude that  the image $\Im W_{\so_N}^{\St}$ is commutative as desired.
\end{proof}

\begin{proposition}\label{prop:w_sys_sp2N}
Let $\sp_{2N}=\sp_{2N}(\K)$, $B_0$ denote the trace form on $\sp_{2N}$, and $\St\colon \sp_{2N} \to \End(\K^{2N})$ denote the standard representation of $\sp_{2N}$. 
Let $W_{\sp_{2N}}^{\St}\colon \A(\downarrow\downarrow)_\K\to \End(\K^{2N})^{\otimes 2}$ be the weight system associated with the triple $(\sp_{2N}, B_0, \St)$.
    Then, the image $\Im W_{\sp_{2N}}^{\St} \subset \End(\K^{2N})^{\otimes 2}$ is commutative.
\end{proposition}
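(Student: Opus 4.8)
The plan is to follow the template of the proofs of Propositions~\ref{prop:w_sys_glN} and~\ref{prop:w_sys_soN} together with Corollaries~\ref{cor:w_sys_im_sl_N} and~\ref{cor:w_sys_im_so_N}: first one records the quadratic Casimir tensor $\mathbf{1}_{\sp_{2N}}$ with respect to $B_0$ explicitly in terms of matrix units, then one runs the state-sum/resolution argument to see that $W_{\sp_{2N}}^{\St}(J)$ lies, for every chord diagram $J$, in the linear span of a fixed finite list of operators on $\K^{2N}\otimes\K^{2N}$, and finally one checks by hand that those operators commute with one another.

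For the first step, I would fix the standard symplectic form on $\K^{2N}$ with Gram matrix $\Omega=(\omega_{i,j})$, write $\bar\imath$ for the index paired with $i$ and $\varepsilon_i\in\{\pm1\}$ for the associated signs (so $\omega_{i,\bar\imath}=\varepsilon_i$ and $\varepsilon_{\bar\imath}=-\varepsilon_i$), and use the customary basis of $\sp_{2N}$ consisting of the elements $e_{i,j}-\varepsilon_i\varepsilon_j\,e_{\bar\jmath,\bar\imath}$, with a fixed choice of representatives so as not to overcount. A direct computation of $B_0$ on this basis, formally identical to the one carried out for $\so_N$ in the proof of Proposition~\ref{prop:w_sys_soN}, then yields an expression of the form
\[
\mathbf{1}_{\sp_{2N}}=\tfrac12\Bigl(\sum_{i,j}e_{i,j}\otimes e_{j,i}+\sum_{i,j}\varepsilon_i\varepsilon_j\,e_{i,j}\otimes e_{\bar\imath,\bar\jmath}\Bigr)\in\End(\K^{2N})^{\otimes2};
\]
the only change from the orthogonal case is that the minus sign appearing there for $\so_N$ becomes a plus sign and the weights $\varepsilon_i\varepsilon_j$ appear, both of which are forced by the antisymmetry $\Omega^{T}=-\Omega$.

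For the second step, read $J\in\A(\downarrow\downarrow)_\K$ as a chord diagram and insert this Casimir tensor along each chord. Exactly as in the $\sl_N$ and $\so_N$ cases one introduces states $\sigma$ on the set of chords: a chord is resolved either by the ``$X$''-smoothing coming from $\sum_{i,j}e_{i,j}\otimes e_{j,i}$ or by the symplectic ``cap--cup''-smoothing coming from $\sum_{i,j}\varepsilon_i\varepsilon_j\,e_{i,j}\otimes e_{\bar\imath,\bar\jmath}$, and one then propagates the indices along the two intervals using $e_{i,j}e_{k,l}=\delta_{j,k}e_{i,l}$. Summing over all states, the two-interval contribution of $W_{\sp_{2N}}^{\St}(J)$ is, up to a power of $2N$ (governed as before by the number of closed components created in the resolution) and a sign, always one of
\[
\id_{\K^{2N}}\otimes\id_{\K^{2N}},\qquad P\coloneqq\sum_{i,j}e_{i,j}\otimes e_{j,i},\qquad Q\coloneqq\sum_{i,j}\varepsilon_i\varepsilon_j\,e_{i,j}\otimes e_{\bar\imath,\bar\jmath}.
\]
Hence $\Im W_{\sp_{2N}}^{\St}$ is contained in the span of $\id\otimes\id$, $P$, and $Q$.

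For the last step, a short matrix-unit computation gives $P^{2}=\id\otimes\id$, $PQ=QP=-Q$, and $Q^{2}=2N\,Q$ (the minus sign in $PQ$ being again a direct consequence of $\varepsilon_{\bar\imath}=-\varepsilon_i$), so the span of $\id\otimes\id$, $P$, $Q$ is a commutative subalgebra of $\End(\K^{2N})^{\otimes2}$ containing $\Im W_{\sp_{2N}}^{\St}$; therefore $\Im W_{\sp_{2N}}^{\St}$ is commutative, which is the assertion. The one place where genuine care is needed---and the only place where the argument differs from the proof of Proposition~\ref{prop:w_sys_soN}---is the sign bookkeeping forced by the antisymmetry of the symplectic form: it enters the formula for $\mathbf{1}_{\sp_{2N}}$, it attaches an orientation-dependent sign each time a chord is resolved by the $Q$-term, and it produces $PQ=-Q$; once the conventions are fixed consistently, however, everything is routine.
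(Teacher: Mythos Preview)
Your approach is correct in outline and is genuinely different from the paper's. The paper follows Bar--Natan's index-splitting trick: it labels each \emph{arc} (not each chord) by one of the two index blocks $\{1,\dots,N\}$ or $\{N{+}1,\dots,2N\}$, obtains nine spanning operators in $\End(\K^{2N})^{\otimes 2}$ from the possible non-closed curve types, and then argues commutativity for the image. Your route is more economical: you keep all $2N$ indices together, use two resolutions per chord coming from the two summands of the Casimir, and land in the $3$-dimensional span of $\id\otimes\id$, $P$, and $Q$. Conceptually this is the statement that $W_{\sp_{2N}}^{\St}$ factors through the Brauer algebra on two strands, and it gives a tighter description of the image than the paper's.

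Two points, however, need attention. First, your sign in $\mathbf{1}_{\sp_{2N}}$ is off: with the conventions you state ($\omega_{i,\bar\imath}=\varepsilon_i$, $\varepsilon_{\bar\imath}=-\varepsilon_i$), a direct check for $\sp_2=\sl_2$ shows that the formula with a plus sign collapses to $\tfrac12\,\id\otimes\id$, whereas the correct Casimir is $\tfrac12(P-Q)$; so the minus sign from the orthogonal case survives, and what changes is the presence of the bars and the weights $\varepsilon_i\varepsilon_j$. This does not affect your conclusion, since $\id,P,Q$ still commute. Second, the sentence ``once the conventions are fixed consistently, everything is routine'' is hiding the one genuinely new verification: in the $\sp$ case the weights $\varepsilon_i\varepsilon_j$ attached to each $Q$-resolution depend on the running indices, and you must check that after summing over indices the contribution of a fixed state really is a \emph{scalar} multiple of one of $\id,P,Q$ (equivalently, that the zig--zag identities for the symplectic cap and cup make the accumulated sign depend only on the topology of the resolved curve, yielding loop value $-2N$). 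This is true and standard, but it is precisely where the $\sp$ case differs from $\so$, so it deserves a line of justification rather than being waved away.
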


\begin{proof}
    Let us take the basis of $\sp_{2N}$ which consists of 
    \begin{align*}
        & e_{i+N,j} + e_{j+N,i} \quad (1 \leq i< j \leq N), \\
        & e_{i+N,j} - e_{j-N,i} \quad  (1 \leq i \leq N, N+1 \leq j \leq 2N),\\
        & e_{i-N,j} + e_{j-N,i} \quad (N+1 \leq i< j \leq 2N),\\
        & e_{i+N, i}, \quad e_{i, i+N} \quad (1 \leq i \leq N). 
\end{align*}
Then, with a little computation, the quadratic Casimir tensor $\mathbf{1}_{\sp_{2N}}$ of $\sp_{2N}$ with respect to the trace form $B_0$ can be expressed as 
\begin{align*}
    \mathbf{1}_{\sp_{2N}}&= \frac{1}{2} \sum_{1 \leq i, j \leq N} (e_{i+N,j} \otimes e_{j,i+N} + e_{i+N,j} \otimes e_{i, j+N}) \\
    &+ \frac{1}{2}  \sum_{1 \leq i, j \leq N}(e_{i,j+N} \otimes e_{j+N, i} + e_{i,j+N} \otimes  e_{i+N,j}) \\
    &+ \frac{1}{2} \sum_{1 \leq i,j \leq N}( e_{i+N,j+N} \otimes e_{j+N,i+N} + e_{i,j} \otimes e_{j,i})\\
    &- \frac{1}{2} \sum_{1 \leq i,j \leq N}( e_{i+N,j+N} \otimes e_{i,j} + e_{i, j} \otimes e_{i+N,j+N}).
\end{align*}
Then, from this expression of $ \mathbf{1}_{\sp_{2N}}$, the computation of $ W_{\sp_{2N}}^{\St}$ reduces to the following diagrammatic one as in \cite[Page~17]{Bar91}. We define a state $\sigma=\sigma_{\sp_{2N}}^{\St}$ of a chord diagram $J$ in $\A(\downarrow \downarrow)_\K$ as a map from the set of all arcs of the intervals divided by univalent vertices to the set $\{P, Q\}$ consisting of symbols $P$ and $Q$. Then, for each state $\sigma$, we consider the corresponding resolution of $J$ as follows:
\begin{align*}
     & \begin{tikzpicture}[scale=0.5, baseline={(0,0.9)}, densely dashed]
    \draw[->, solid] (0,3) -- (0,1);
    \draw[<-, solid] (2,3) -- (2,1);
    \draw (0,2) -- (2,2);
    \node at (-0.5, 2.7) {$P$};
    \node at (-0.5, 1.3) {$P$};
    \node at (2.5, 2.7) {$P$};
    \node at (2.5, 1.3) {$P$};
    \end{tikzpicture}
    \leftrightsquigarrow
    \quad 
    \frac{1}{2}\ 
    \begin{tikzpicture}[scale=0.5, baseline={(0,0.9)}, densely dashed]
    \draw[solid, rounded corners=8pt] (0,3) -- (0,2.2) -- (2,2.2) -- (2,3);
    \draw[solid, rounded corners=8pt] (2,1) -- (2,1.8) -- (0, 1.8) -- (0,1);
     \node at (-0.5, 2.7) {$P$};
    \node at (-0.5, 1.3) {$P$};
    \node at (2.5, 2.7) {$P$};
    \node at (2.5, 1.3) {$P$};
    \end{tikzpicture}
    ,\quad 
    \begin{tikzpicture}[scale=0.5, baseline={(0,0.9)}, densely dashed]
    \draw[->, solid] (0,3) -- (0,1);
    \draw[<-, solid] (2,3) -- (2,1);
    \draw (0,2) -- (2,2);
    \node at (-0.5, 2.7) {$Q$};
    \node at (-0.5, 1.3) {$Q$};
    \node at (2.5, 2.7) {$Q$};
    \node at (2.5, 1.3) {$Q$};
    \end{tikzpicture}
    \leftrightsquigarrow
    \quad 
    \frac{1}{2}\ 
    \begin{tikzpicture}[scale=0.5, baseline={(0,0.9)}, densely dashed]
    \draw[solid, rounded corners=8pt] (0,3) -- (0,2.2) -- (2,2.2) -- (2,3);
    \draw[solid, rounded corners=8pt] (2,1) -- (2,1.8) -- (0, 1.8) -- (0,1);
    \node at (-0.5, 2.7) {$Q$};
    \node at (-0.5, 1.3) {$Q$};
    \node at (2.5, 2.7) {$Q$};
    \node at (2.5, 1.3) {$Q$};
    \end{tikzpicture}
    , \\
     & \begin{tikzpicture}[scale=0.5, baseline={(0,0.9)}, densely dashed]
    \draw[->, solid] (0,3) -- (0,1);
    \draw[<-, solid] (2,3) -- (2,1);
    \draw (0,2) -- (2,2);
    \node at (-0.5, 2.7) {$Q$};
    \node at (-0.5, 1.3) {$Q$};
    \node at (2.5, 2.7) {$P$};
    \node at (2.5, 1.3) {$P$};
    \end{tikzpicture}
     \leftrightsquigarrow
    \quad 
    - \frac{1}{2}\ 
    \begin{tikzpicture}[scale=0.5, baseline={(0,0.9)}, densely dashed]
    \draw[solid] (0,1) -- (2,3);
    \draw[solid] (2,1) -- (0,3);
    \node at (-0.5, 2.7) {$Q$};
    \node at (-0.5, 1.3) {$Q$};
    \node at (2.5, 2.7) {$P$};
    \node at (2.5, 1.3) {$P$};
    \end{tikzpicture}
    ,\quad 
    \begin{tikzpicture}[scale=0.5, baseline={(0,0.9)}, densely dashed]
    \draw[->, solid] (0,3) -- (0,1);
    \draw[<-, solid] (2,3) -- (2,1);
    \draw (0,2) -- (2,2);
    \node at (-0.5, 2.7) {$P$};
    \node at (-0.5, 1.3) {$P$};
    \node at (2.5, 2.7) {$Q$};
    \node at (2.5, 1.3) {$Q$};
    \end{tikzpicture}
    \leftrightsquigarrow
    \quad 
    - \frac{1}{2}\ 
    \begin{tikzpicture}[scale=0.5, baseline={(0,0.9)}, densely dashed]
    \draw[solid] (0,1) -- (2,3);
    \draw[solid] (2,1) -- (0,3);
    \node at (-0.5, 2.7) {$P$};
    \node at (-0.5, 1.3) {$P$};
    \node at (2.5, 2.7) {$Q$};
    \node at (2.5, 1.3) {$Q$};
    \end{tikzpicture}
    ,\\
    & \begin{tikzpicture}[scale=0.5, baseline={(0,0.9)}, densely dashed]
    \draw[->, solid] (0,3) -- (0,1);
    \draw[<-, solid] (2,3) -- (2,1);
    \draw (0,2) -- (2,2);
    \node at (-0.5, 2.7) {$P$};
    \node at (-0.5, 1.3) {$Q$};
    \node at (2.5, 2.7) {$P$};
    \node at (2.5, 1.3) {$Q$};
    \end{tikzpicture}
    \leftrightsquigarrow
    \quad 
    \frac{1}{2}\ 
    \Biggl(\ 
    \begin{tikzpicture}[scale=0.5, baseline={(0,0.9)}, densely dashed]
    \draw[solid, rounded corners=8pt] (0,3) -- (0,2.2) -- (2,2.2) -- (2,3);
    \draw[solid, rounded corners=8pt] (2,1) -- (2,1.8) -- (0, 1.8) -- (0,1);
    \node at (-0.5, 2.7) {$P$};
    \node at (-0.5, 1.3) {$Q$};
    \node at (2.5, 2.7) {$P$};
    \node at (2.5, 1.3) {$Q$};
    \end{tikzpicture} 
    + 
    \begin{tikzpicture}[scale=0.5, baseline={(0,0.9)}, densely dashed]
    \draw[solid] (0,1) -- (2,3);
    \draw[solid] (2,1) -- (0,3);
    \node at (-0.5, 2.7) {$P$};
    \node at (-0.5, 1.3) {$Q$};
    \node at (2.5, 2.7) {$P$};
    \node at (2.5, 1.3) {$Q$};
    \end{tikzpicture}
    \ \Biggr)
    ,\\
     & \begin{tikzpicture}[scale=0.5, baseline={(0,0.9)}, densely dashed]
    \draw[->, solid] (0,3) -- (0,1);
    \draw[<-, solid] (2,3) -- (2,1);
    \draw (0,2) -- (2,2);
    \node at (-0.5, 2.7) {$Q$};
    \node at (-0.5, 1.3) {$P$};
    \node at (2.5, 2.7) {$Q$};
    \node at (2.5, 1.3) {$P$};
    \end{tikzpicture}
    \leftrightsquigarrow
    \quad 
    \frac{1}{2}\ 
    \Biggl(\ 
    \begin{tikzpicture}[scale=0.5, baseline={(0,0.9)}, densely dashed]
    \draw[solid, rounded corners=8pt] (0,3) -- (0,2.2) -- (2,2.2) -- (2,3);
    \draw[solid, rounded corners=8pt] (2,1) -- (2,1.8) -- (0, 1.8) -- (0,1);
   \node at (-0.5, 2.7) {$Q$};
    \node at (-0.5, 1.3) {$P$};
    \node at (2.5, 2.7) {$Q$};
    \node at (2.5, 1.3) {$P$};
    \end{tikzpicture} 
    + 
    \begin{tikzpicture}[scale=0.5, baseline={(0,0.9)}, densely dashed]
    \draw[solid] (0,1) -- (2,3);
    \draw[solid] (2,1) -- (0,3);
    \node at (-0.5, 2.7) {$Q$};
    \node at (-0.5, 1.3) {$P$};
    \node at (2.5, 2.7) {$Q$};
    \node at (2.5, 1.3) {$P$};
    \end{tikzpicture}
    \ \Biggr).
\end{align*}
Here, we may understand that the  arcs labeled $P$ as being indexed by $1, \ldots, N$ and those labeled $Q$ as being indexed by $N+1, \ldots, 2N$, respectively. Note that, different from \cite[Page~17]{Bar91},  the curves obtained by the resolution of $J$ also inherit the labeling information by the symbols $P$ and $Q$, but whether labels are present or not has no effect when counting the contributions from closed curves as in \cite[Page~17]{Bar91}.
   Therefore, as the result of this diagrammatic computation, we finally get a linear combination of operators of the forms
   \begin{align*}
   &\id_{\K^{2N}} \otimes \id_{\K^{2N}} , \quad \sum_{i,j=1}^N e_{i,j} \otimes e_{j,i}, \quad \sum_{i,j=1}^N e_{i+N,j+N} \otimes e_{j+N,i+N},\\
   & \sum_{i,j=1}^N e_{i,j+N} \otimes e_{j+N,i},\quad \sum_{i,j=1}^N e_{i+N,j} \otimes e_{j,i+N},\quad \sum_{i,j=1}^N e_{i,j} \otimes e_{i+N, j+N},\\
   & \sum_{i,j=1}^N  e_{i+N, j+N} \otimes e_{i,j},\quad \sum_{i,j=1}^N e_{i+N, j} \otimes e_{i,j+N},\quad \sum_{i,j=1}^N e_{i, j+N} \otimes e_{i+N,j}
   \end{align*}
and they  commutate each other. It implies that the image $\Im W_{\sp_{2N}}^{\St}$ is commutative as desired.
\end{proof}

\subsection{The universal $\sl_2$ weight system for $\A(\downarrow\downarrow)$}

In this subsection we explore the weight system in the case where $\g = \sl_2(\K)$ further. It will be shown that the associated weight system has a commutative image at the universal level as follows.

\begin{proposition}\label{prop:univ_sl2_w_sys}
Let $\sl_2 = \sl_2(\K)$ and $B_0$ the trace form on it. Let $W_{\sl_2}\colon \A(\downarrow\downarrow)_\K \to U(\sl_2)^{\otimes 2}$ denote the universal $\sl_2$ weight system associated with the metrized Lie algebra $(\sl_2, B_0)$.
Then, the image of $W_{\sl_2}$ is commutative.
\end{proposition}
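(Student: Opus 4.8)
The plan is to find a commutative subalgebra $\mathcal{B}\subseteq U(\sl_2)^{\otimes 2}$ with $\Im W_{\sl_2}\subseteq\mathcal{B}$. Let $c\in U(\sl_2)$ be the quadratic Casimir element for $B_0$, so that $c_1:=W_{\sl_2}(\text{the single-chord diagram on the first interval})=c\otimes 1$, and set $c_2:=1\otimes c=W_{\sl_2}(\text{the single-chord diagram on the second interval})$ and $t:=W_{\sl_2}(t_{12})=\mathbf{1}_{\sl_2}\in U(\sl_2)^{\otimes 2}$. Let $\mathcal{B}$ be the unital $\K$-subalgebra generated by $c_1,c_2,t$. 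Since $c$ is central in $U(\sl_2)$, the elements $c_1,c_2$ are central in $U(\sl_2)^{\otimes 2}$, hence $\mathcal{B}$ is spanned by the pairwise commuting monomials $c_1^i c_2^j t^k$ and is therefore commutative. So it suffices to prove $W_{\sl_2}(D)\in\mathcal{B}$ for every Jacobi diagram $D$ on $\downarrow\downarrow$.

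First I would apply the STU relation to write an arbitrary $D$ as a $\K$-linear combination of chord diagrams on $\downarrow\downarrow$, reducing to the case that $D$ is a chord diagram with $k$ chords, and then induct on $k$. The case $k=0$ is trivial. If some chord $\gamma$ of $D$ has its two endpoints consecutive on a single interval, the state sum defining $W_{\sl_2}$ produces $\sum_a e_ae_a=c$ at that location, and since $c$ is central in $U(\sl_2)$ it factors off as $c_1$ or $c_2$: $W_{\sl_2}(D)=c_i\cdot W_{\sl_2}(D\setminus\gamma)\in\mathcal{B}$ by induction. In the remaining case I would use the skein relation \eqref{eq:skein_sl2}: by the same method as Chmutov--Varchenko's computation of the $\sl_2$ weight system on chord diagrams (see \cite[Section~6.1]{CDM12}), \eqref{eq:skein_sl2} together with STU lets one rewrite $W_{\sl_2}(D)$, modulo chord diagrams with strictly fewer chords, in terms of $W_{\sl_2}$ of chord diagrams in which two chosen chords are ``unlinked''; iterating this, one reaches chord diagrams whose chords are pairwise unlinked and grouped into consecutive blocks, for which $W_{\sl_2}$ is manifestly a product of copies of $c_1$, $c_2$, and $t$ and thus lies in $\mathcal{B}$.

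The hard part will be the termination bookkeeping for this reduction: as in Chmutov--Varchenko's original argument one must order the reductions by a secondary complexity in addition to the chord count, and treat separately the cases where the chords being unlinked are of types $(1,1)$, $(2,2)$, or $(1,2)$. An alternative that avoids this is to run the second step purely inside $U(\sl_2)^{\otimes 2}$: once $D$ is a chord diagram, $W_{\sl_2}(D)$ is a state sum over monomials in a fixed basis $\{e_a\}$ of $\sl_2$ laid along the two strands, and iterated use of $e_be_a=e_ae_b+[e_b,e_a]$ together with the $\sl_2$ identities $\sum_a e_ae_a=c$ and the fact that $\sum_{a,b}c_{abd}c_{abe}$ is a scalar multiple of $\delta_{de}$ (the Killing form being proportional to $B_0$) collapses any such sum to a $\K$-polynomial in $c_1,c_2,t$. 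This is, for example, how one verifies by hand that $W_{\sl_2}(J_3)$ is a $\K$-combination of $c_1c_2$, $t^2$, and $t$, consistently with the fact (Proposition~\ref{prop:J3J2n+1}) that the failure of $J_3$ and $J_{2n+1}$ to commute in $\A(\downarrow\downarrow)$ is invisible to the $\sl_2$ weight system. Once $\Im W_{\sl_2}\subseteq\mathcal{B}$ is established, the commutativity of $\Im W_{\sl_2}$ is immediate from the first paragraph.
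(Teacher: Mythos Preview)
Your strategy is sound and in fact aims at a stronger conclusion than the paper's: you try to show $\Im W_{\sl_2}\subseteq\mathcal{B}=\K[c_1,c_2,t]$, whereas the paper only establishes that $\Im W_{\sl_2}$ is commutative. Both arguments rest on the same two ingredients---the STU relation and the $\sl_2$ skein identity \eqref{eq:skein_sl2}---but the paper organizes them in a way that neatly resolves the termination issue you yourself flag as ``the hard part.'' Rather than first reducing to chord diagrams and then inducting on chord count with a secondary complexity measure, the paper inducts directly on the degree $d$ of an arbitrary Jacobi diagram and splits into three cases: (a) if the diagram contains an $H$-shaped dashed subgraph, \eqref{eq:skein_sl2} drops the degree by one; (b) if it contains a $Y$ but no $H$, iterated STU factors the diagram as a degree-$(d-2)$ piece times a degree-$2$ tripod, plus terms falling under case (a); (c) if it is a pure chord diagram, STU slides one chord to the bottom, giving a degree-$(d-1)$ piece times a single chord plus terms falling under case (b). Each case feeds into an earlier case or strictly lower degree, so the induction closes without any Chmutov--Varchenko-style bookkeeping.

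If you rerun this same three-case induction with your stronger hypothesis $W(\A_{\leq d})\subseteq\mathcal{B}$ in place of the paper's $W(\A_{\leq d})\subset Z(\Im W)$, the argument goes through unchanged: the base case $d\leq 2$ is a short direct check (single chords map to $c_1,c_2,t$, and the bubble diagram $\phichord$ maps to a scalar multiple of $t$ by the identity $\sum_{a,b}c_{abd}c_{abe}\propto\delta_{de}$ you mention), while the inductive step is identical since $\mathcal{B}$ is closed under products. So your explicit description of the image is genuinely available, but the cleanest route to it is the paper's degree induction rather than the chord-unlinking procedure you sketch. Your second alternative, collapsing state sums directly inside $U(\sl_2)^{\otimes 2}$ via structure-constant identities, is also correct in principle but, as written, is too vague to stand on its own as a proof.
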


\begin{proof}
Let $\A=\A(\downarrow\downarrow)_\K$ and $W=W_{\sl_2}$.
It suffices to show that $W(\A_{\leq d}) \subset Z(\Im W)$ for any $d\geq 1$, where $Z(\Im W)$ denotes the center of the image $\Im W$. We prove it by induction on $d$. For $d =2$, it holds since $\A_{\leq 2} \subset Z(\A)$ by Corollary~\ref{cor:A_leq2_is_in_center}. Assume that it is true for $d-1$. For $J \in \A_{d}$, there are the following three possibilities (see Figure~\ref{fig:diag_proof_univ_sl2}):
\begin{enumerate}[label=(\alph*)]
\item $J$ contains at least one $H$-shaped subgraph.
\item $J$ does not contain $H$-shaped subgraphs but contains at least one $Y$-shaped subgraph. 
\item $J$ contains neither $H$-shaped nor $Y$-shaped subgraphs, i.e., $J$ consists of only chords.
\end{enumerate}

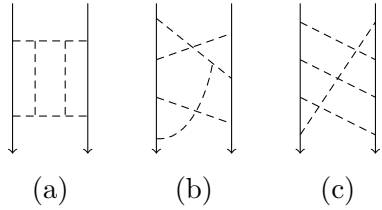
\begin{figure}[h]
    \begin{tikzpicture}[scale=0.5, densely dashed]
     \draw[solid, ->] (0, 2) -- (0, -2);
    \draw[solid, ->] (2, 2) -- (2, -2);
    \draw (0, 1) -- (2,1);
    \draw (0, -1) -- (2,-1);
    \draw (0.6, 1) -- (0.6,-1);
    \draw (1.4, 1) -- (1.4,-1);
    \node at (1,-3) {(a)};
    \end{tikzpicture}
    \quad \quad 
    \begin{tikzpicture}[scale=0.5, densely dashed]
        \draw[solid, ->] (0, 2) -- (0, -2);
    \draw[solid, ->] (2, 2) -- (2, -2);
       \draw (0, 1.6) -- (2,0);
       \draw (0,0.5) -- (2,1.2);
       \draw (0, -0.5) -- (2, -1.2);
       \draw (0, -1.6) .. controls (1, -1.7) and (1.45, 0) .. (1.5, 0.45);
       \node at (1,-3) {(b)};
    \end{tikzpicture}
    \quad \quad 
    \begin{tikzpicture}[scale=0.5, densely dashed]
        \draw[solid, ->] (0, 2) -- (0, -2);
    \draw[solid, ->] (2, 2) -- (2, -2);
       \draw (0, 1.5) -- (2,0.5);
       \draw (0,0.5) -- (2,-0.5);
       \draw (0, -0.5) -- (2, -1.5);
       \draw (0, -1.5) -- (2, 1.5);
       \node at (1,-3) {(c)};
    \end{tikzpicture}
    \caption{Typical examples of the cases (a), (b), and (c).}
    \label{fig:diag_proof_univ_sl2}
\end{figure}
For the case (a), the skein relation \eqref{eq:skein_sl2} implies that  $J \in \A_{d-1}(\downarrow\downarrow)$, and hence $W(J) \in Z(\Im W)$ holds  by the induction hypothesis. For the case (b), by iterated use of STU relation to univalent vertices of $Y$-shaped subgraphs and the adjacent univalent vertices, we obtain 

\[
\begin{tikzpicture}[scale=0.5, baseline={(0,0.8)}, densely dashed]
    \draw[solid] (0,4) -- (0,2.8);
    \draw[->, solid] (0,1.2) -- (0,0);
    \draw[solid] (2,4) -- (2,2.8);
    \draw[->, solid] (2,1.2) -- (2,0);
    \draw[solid] (-0.5, 1.2) -- (2.5, 1.2) -- (2.5, 2.8) -- (-0.5, 2.8) -- cycle;
    \node at (1, 2) {$J$};
\end{tikzpicture}
\quad 
= 
\quad 
\begin{tikzpicture}[scale=0.5, baseline={(0,0.8)}, densely dashed]
    \draw[ solid] (0,4) -- (0,3.6);
    \draw[->, solid] (0,2) -- (0,0);
    \draw[ solid] (2,4) -- (2,3.6);
    \draw[->, solid] (2,2) -- (2,0);
    \draw[solid] (-0.5, 2) -- (2.5, 2) -- (2.5, 3.6) -- (-0.5, 3.6) -- cycle;
    \draw (0, 1.5) -- (1, 1) -- (2,1);
    \draw (0, 0.5) -- (1, 1);
    \node at (1, 2.8) {$J'$};
\end{tikzpicture}
\quad 
+ 
\quad
\sum_{J''} 
\quad 
\begin{tikzpicture}[scale=0.5, baseline={(0,0.8)}, densely dashed]
    \draw[solid] (0,4) -- (0,2.8);
    \draw[->, solid] (0,1.2) -- (0,0);
    \draw[solid] (2,4) -- (2,2.8);
    \draw[->, solid] (2,1.2) -- (2,0);
    \draw[solid] (-0.5, 1.2) -- (2.5, 1.2) -- (2.5, 2.8) -- (-0.5, 2.8) -- cycle;
    \node at (1, 2) {$J''$};
\end{tikzpicture}\ ,
\]
where $J' \in \A_{d-2}$ and all the diagrams $J''$ in the sum contain at least one $H$-shaped subgraph. Note that in this equality we implicitly suppose that $Y$-shaped subgraph lies on both strings. However, the similar equality holds if the $Y$-shaped subgraph lies on only one of the two strings. Therefore, the induction hypothesis and the argument for the case (a) imply $W(J) \in Z(\Im W)$.
For the case (c), as in the case (b), iterated usages of STU relation to one of the chords in $J$ leads to the following equality:
\[
\begin{tikzpicture}[scale=0.5, baseline={(0,0.8)}, densely dashed]
    \draw[solid] (0,4) -- (0,2.8);
    \draw[->, solid] (0,1.2) -- (0,0);
    \draw[solid] (2,4) -- (2,2.8);
    \draw[->, solid] (2,1.2) -- (2,0);
    \draw[solid] (-0.5, 1.2) -- (2.5, 1.2) -- (2.5, 2.8) -- (-0.5, 2.8) -- cycle;
    \node at (1, 2) {$J$};
\end{tikzpicture}
\quad 
= 
\quad 
\begin{tikzpicture}[scale=0.5, baseline={(0,0.8)}, densely dashed]
    \draw[ solid] (0,4) -- (0,3.6);
    \draw[->, solid] (0,2) -- (0,0);
    \draw[ solid] (2,4) -- (2,3.6);
    \draw[->, solid] (2,2) -- (2,0);
    \draw[solid] (-0.5, 2) -- (2.5, 2) -- (2.5, 3.6) -- (-0.5, 3.6) -- cycle;
    \draw (0, 1)  -- (2,1);
    \node at (1, 2.8) {$J'$};
\end{tikzpicture}
\quad 
+ 
\quad
\sum_{J''} 
\quad 
\begin{tikzpicture}[scale=0.5, baseline={(0,0.8)}, densely dashed]
    \draw[solid] (0,4) -- (0,2.8);
    \draw[->, solid] (0,1.2) -- (0,0);
    \draw[solid] (2,4) -- (2,2.8);
    \draw[->, solid] (2,1.2) -- (2,0);
    \draw[solid] (-0.5, 1.2) -- (2.5, 1.2) -- (2.5, 2.8) -- (-0.5, 2.8) -- cycle;
    \node at (1, 2) {$J''$};
\end{tikzpicture}\ ,
\]
where $J' \in \mathcal{A}_{d-1}$ and the diagrams $J''$ in the sum consist of Jacobi diagrams of the case (b). Therefore, by the induction hypothesis and the consequence of the case (b), we conclude that $W(J) \in Z(\Im W)$. Again, note that, in the above equality, we depict the result of the application of STU relation to one of the chord in $J$ which lies on both strings but the similar equality can be obtained if we choose another chord in $J$ which lies on one of the two strings. Therefore, the assertion has been proved.
\end{proof}

\subsection{Proof of Theorem~\ref{thm:weight_system}}

Theorem~\ref{thm:weight_system} is a consequence of the following theorem. 
\begin{theorem}\label{thm:weight_system_precise}
    Let $N$ be a positive integer. For any associator $\Phi$ and a weight system $W \in \{W_{\sl_2}, W_{\gl_N}^{\St}, W_{\sl_N}^{\St}, W_{\so_N}^{\St},  W_{\sp_{2N}}^{\St}\}$ on two strands, the composite map $W \circ Z_{\Phi}$ does not depend on the choice of the associator $\Phi$.
\end{theorem}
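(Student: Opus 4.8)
The plan is to deduce Theorem~\ref{thm:weight_system_precise} from the behaviour of the Kontsevich invariant under twisting together with the commutativity results of the previous subsections. As in the proof of Proposition~\ref{prop:deg_6}, for any two associators $\Phi$ and $\Phi'$ there is $F\in\widehat{\A}(\downarrow\downarrow)_{\K}$ (coming from a twist relating $\Phi$ and $\Phi'$) such that, by \cite[Theorem~7]{LeMu96CM} recalled in Section~\ref{subsec:Kontsevich}, $Z_{\Phi'}(T)=F\cdot Z_{\Phi}(T)\cdot F^{-1}$ in $\widehat{\A}(\downarrow\downarrow)_{\K}$ for every $2$-component string link $T$.

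Given $W\in\{W_{\sl_2},W_{\gl_N}^{\St},W_{\sl_N}^{\St},W_{\so_N}^{\St},W_{\sp_{2N}}^{\St}\}$, this is an algebra homomorphism on $\A(\downarrow\downarrow)_{\K}$ whose image is commutative by Corollaries~\ref{cor:w_sys_im_gl_N}, \ref{cor:w_sys_im_sl_N}, \ref{cor:w_sys_im_so_N} and Propositions~\ref{prop:w_sys_sp2N} and \ref{prop:univ_sl2_w_sys}. Morally the argument is one line: applying $W$ to the twisting formula and using that $W(F)$ and $W(Z_{\Phi}(T))$ commute inside $\Im W$, the conjugation is trivial, so $W\circ Z_{\Phi'}=W\circ Z_{\Phi}$. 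To make this rigorous without relying on a completion of the target $\End(V)^{\otimes 2}$ (or $U(\sl_2)^{\otimes 2}$) adapted to the degree filtration of $\widehat{\A}(\downarrow\downarrow)_{\K}$, I would argue degreewise: writing $F=\sum_i F_i$ and $Z_{\Phi}(T)=\sum_j Z_{\Phi}(T)_j$ for the degree decompositions and taking the degree-$d$ part of the twisting formula, applying $W$ to this finite sum yields
\[
W\bigl(Z_{\Phi'}(T)_d\bigr)=\sum_{i+j+k=d}W(F_i)\,W\bigl(Z_{\Phi}(T)_j\bigr)\,W\bigl((F^{-1})_k\bigr),
\]
with every factor lying in the commutative algebra $\Im W$. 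Rearranging and using $\sum_{i+k=m}F_i(F^{-1})_k=\delta_{m,0}$ in $\A(\downarrow\downarrow)_{\K}$ collapses the right-hand side to $W\bigl(Z_{\Phi}(T)_d\bigr)$, so $W(Z_{\Phi}(T)_d)$ depends only on $T$ and $d$. This is the asserted independence of the associator, and Theorem~\ref{thm:weight_system} follows at once since the family $W_{\gl_N}^{\St}$ $(N\geq 1)$ already furnishes infinitely many such weight systems.

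I do not anticipate a genuine obstacle at this stage: once commutativity of the images is in hand the deduction is formal. The substantive work is the diagrammatic computation of these weight systems carried out in the preceding subsections (Propositions~\ref{prop:w_sys_glN}, \ref{prop:w_sys_soN}, \ref{prop:w_sys_sp2N} and \ref{prop:univ_sl2_w_sys}), from which commutativity of each image follows because the image is spanned there by a handful of pairwise commuting operators. The only point meriting care in writing up Theorem~\ref{thm:weight_system_precise} itself is to fix the meaning of $W\circ Z_{\Phi}$, which the degreewise formulation above handles (equivalently, one records the degree by a formal parameter so that $W$ extends continuously to $\widehat{\A}(\downarrow\downarrow)_{\K}$), and in either formulation commutativity of $\Im W$ is exactly what trivialises the conjugation by $W(F)$.
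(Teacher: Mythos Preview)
Your proposal is correct and follows essentially the same approach as the paper: use the twisting formula $Z_{\Phi'}(T)=F\cdot Z_{\Phi}(T)\cdot F^{-1}$, apply the algebra homomorphism $W$, and invoke the commutativity of $\Im W$ established in Corollaries~\ref{cor:w_sys_im_gl_N}, \ref{cor:w_sys_im_sl_N}, \ref{cor:w_sys_im_so_N} and Propositions~\ref{prop:w_sys_sp2N}, \ref{prop:univ_sl2_w_sys}. Your degreewise treatment of the completion issue is a bit more explicit than the paper's one-line version, but the argument is the same.
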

\begin{proof}
    As recalled in Section~\ref{subsec:Kontsevich}, for associators $\Phi, \Phi'$ and any 2-component string link $T$, we have $Z_{\Phi'}(T) = F\cdot Z_{\Phi}(T)\cdot F^{-1}$ for some $F \in \A(\downarrow\downarrow)_\K$. Since $W$ is an algebra homomorphism, we have $W\circ Z_{\Phi'}(T) = W(F) (W\circ Z_{\Phi}(T)) W(F)^{-1}$. The assertion follows from this identity and Corollaries~\ref{cor:w_sys_im_gl_N}, \ref{cor:w_sys_im_sl_N}, \ref{cor:w_sys_im_so_N} and Propositions~\ref{prop:w_sys_sp2N}, \ref{prop:univ_sl2_w_sys}.
\end{proof}

\begin{remark}
    Theorem~\ref{thm:weight_system_precise} is also valid for $\A(\downarrow^{\epsilon})$, where $\epsilon = +-, -+, --$ with a slight modification to the arguments given in the previous subsections. More precisely, the only difference is that the dual representation is assigned to the interval with the letter $-$ and the rest is essentially the same as in the case of  $\A(\downarrow \downarrow)$.
\end{remark}

\appendix
\section{Computational results on $\grt_1$ and Drinfeld associators}

This appendix gives explicit forms of a basis of the Lie algebra $\grt_1(\K)$ in low degrees, obtained by a \texttt{SageMath} program.
The computational results are essentially due to Adrien Brochier~\cite{Broc25}.

The \emph{Grothendieck--Teichm\"uller Lie algebra} $\grt_1(\K)$ is the Lie algebra of $\GRT_1(\K)$ consisting of $\psi \in \mathfrak{f}(A,B)_\K$ such that 
\begin{align*}
    & \psi(B,A) = - \psi(A,B),\\
    & \psi(C,A) + \psi(B,C) + \psi(A,B) = 0\quad \text{for}\ A + B + C = 0,\\
    & \psi(t_{12}, t_{23} + t_{24}) + \psi(t_{13} + t_{23}, t_{34})\\
    &\qquad = \psi(t_{23}, t_{34}) + \psi(t_{12} + t_{13}, t_{24} + t_{34}) + \psi(t_{12}, t_{23})\quad \text{in}\ U(\mathfrak{t}_4)_\K.
\end{align*}
The Lie algebra structure of $\grt_1(\K)$ is given by the braket operation called the \emph{Ihara bracket} defined as follows: for $\psi_1, \psi_2 \in \grt_1(\K)$, 
\[
\{ \psi_1, \psi_2\} =   D_{\psi_1}(\psi_2)-  D_{\psi_2}(\psi_1)-[\psi_1, \psi_2],
\]
where $D_{\psi_i}$ $(i=1,2)$ is the derivation on $\mathfrak{f}(A,B)_\K$ determined by $D_{\psi_i}(A)=0$ and $D_{\psi_i}(B)=[\psi_i, B]$. As in \cite[Section~10.2.7]{CDM12}, for simplicity of notation, we set
\[
C_{kl} = \ad_B^{k-1} \ad_A^{l-1}[A,B]\quad (k, l \geq 1).
\]
Then, it can be checked that the degree $\leq 8$ part of $\grt_1(\K)$ is given by $\grt_1(\K)_{\leq 8} = \K \sigma_3 \oplus \K \sigma_5 \oplus \sigma_7 \oplus \K \{\sigma_3, \sigma_5 \}$, where 
\begin{align*}
& \sigma_3 = C_{12} + C_{21}, \\
& \sigma_5 = C_{14} + C_{41} + 2(C_{23} + C_{32}) + \frac{1}{2}[C_{11}, C_{12}] + \frac{3}{2} [C_{11}, C_{21}], \\
& \sigma_7 = C_{16} + C_{61} + 3 (C_{25} + C_{52}) + 5(C_{34} + C_{43}) + 4[C_{11}, C_{14}]+ 13[C_{11}, C_{23}]\\
&\qquad + 12[C_{11}, C_{32}] + 5 [C_{11}, C_{41}] + 3[C_{12}, C_{13}] + \frac{61}{16}[C_{12}, C_{22}]\\
&\qquad - \frac{19}{16} [C_{12}, C_{31}]  + \frac{99}{16}[C_{21}, C_{13}] +\frac{179}{16} [C_{21}, C_{22}] + 3 [C_{21},C_{31}]\\
&\qquad + \frac{65}{16}[C_{11}, [C_{11}, C_{12}]]+ \frac{17}{16}[C_{11}, [C_{11}, C_{21}]], \\
& \{ \sigma_3, \sigma_5\} = D_{\sigma_3}(\sigma_5) - D_{\sigma_5}(\sigma_3) -[\sigma_3, \sigma_5]\\
&=- 2[C_{11} ,  C_{15}] - 5  [  C_{11} ,  C_{24} ] + 5  [ C_{11} ,  C_{42}] + 2 [ C_{11} ,  C_{51} ] - 4  [ C_{11} ,  [ C_{11} , C_{13}]\\
&\qquad + 9  [ C_{11} ,  [  C_{11} ,  C_{22}]] + 6  [  C_{11},  [ C_{11} ,  C_{31} ]  ] -5  [ C_{12} ,  C_{14} ]-9  [ C_{12} , C_{23} ] \\
&\qquad + \frac{3}{2}  [  C_{12} ,  C_{32}] + \frac{7}{2}  [ C_{12},  C_{41}  ] - \frac{11}{2}  [ C_{12} ,  [ C_{11},  C_{12} ] ] + \frac{3}{2}  [  C_{12} ,  [  C_{11} , C_{21} ]  ] \\
&\qquad - \frac{7}{2}  [ C_{21} , C_{14} ] - \frac{3}{2}  [ C_{21},  C_{23}  ] + 9 [  C_{21} ,  C_{32} ] + 5  [ C_{21} , C_{41}  ] \\
&\qquad + \frac{7}{2}  [ C_{21} ,  [ C_{11} , C_{21}  ]  ] + \frac{3}{2}  [ C_{13} ,  C_{22}  ] + 3  [ C_{13} , C_{31}  ] + \frac{3}{2}  [ C_{22},  C_{31} ].
\end{align*}

The logarithm of the general rational Drinfeld associator $\Phi_{\Q} \in M_1(\Q)$ can be described as follows up to degree $5$:
\begin{align*}
   & \log \Phi_{\mathbb{Q}} = \frac{1}{24} C_{11} + \lambda_1 \sigma_3 - \frac{1}{1440} (C_{13} + C_{31}) - \frac{1}{5760} C_{22} + \lambda_2 \sigma_5\\
   &+ \frac{\lambda_1}{24} \left(C_{23} + C_{32} + \frac{1}{2} [C_{11}, C_{12}] + \frac{1}{2} [C_{11}, C_{21}]\right) + (\text{terms of degree $\geq 6$}).
\end{align*}
Here, $\lambda_1, \lambda_2 \in \mathbb{Q}$ are arbitrary constants.

\def\cprime{$'$}

\end{document}